\newtheorem{theorem}{Theorem}[section]
\newtheorem*{theoremA}{Theorem A}
\newtheorem*{theoremB}{Theorem B}
\newtheorem{lemma}[theorem]{Lemma}
\newtheorem{corollary}[theorem]{Corollary}
\theoremstyle{remark}
\newtheorem*{remark}{Remark}
\newtheorem{examples}[theorem]{Examples}
\theoremstyle{definition}
\numberwithin{equation}{section}
\newcommand{\bC}{\mathbb{C}}
\newcommand{\bN}{\mathbb{N}}
\newcommand{\bQ}{\mathbb{Q}}
\newcommand{\bR}{\mathbb{R}}
\newcommand{\bZ}{\mathbb{Z}}
\newcommand{\cC}{{\mathcal{C}}}
\newcommand{\cO}{{\mathcal{O}}}
\newcommand{\dist}{\mathrm{dist}}
\newcommand{\et}{\quad\text{and}\quad}
\newcommand{\GL}{\mathrm{GL}}
\newcommand{\proj}{\mathrm{proj}}
\newcommand{\tbigwedge}{{\textstyle{\bigwedge}}}
\newcommand{\tL}{\tilde{L}}
\newcommand{\tuL}{\tilde{\uL}}
\newcommand{\ua}{\mathbf{a}}
\newcommand{\ub}{\mathbf{b}}
\newcommand{\ue}{\mathbf{e}}
\newcommand{\uf}{\mathbf{f}}
\newcommand{\uL}{\mathbf{L}}
\newcommand{\uP}{\mathbf{P}}
\newcommand{\uu}{\mathbf{u}}
\newcommand{\uv}{\mathbf{v}}
\newcommand{\ux}{\mathbf{x}}
\newcommand{\uy}{\mathbf{y}}
\newcommand{\uz}{\mathbf{z}}
\newcommand{\vol}{\mathrm{vol}}
\newcommand{\cOv}{\mathcal{O}_\infty}
\newcommand{\Kv}{K_\infty}
\newcommand{\norm}[1]{\|#1\|}
\newcommand{\bnorm}[1]{\big\|#1\big\|}
\newcommand{\abs}[1]{|#1|}
\newcommand{\babs}[1]{\big|#1\big|}
\newcommand{\perptop}{\perp_{\mathrm{top}}}
\DeclareMathOperator{\ord}{ord}
\begin{document}

\baselineskip=15.8pt

\title[Parametric geometry of numbers]
{Parametric geometry of numbers in function fields}
\author{Damien Roy and Michel Waldschmidt}

\subjclass[2010]{Primary 11J13; Secondary 41A20, 41A21, 13J05, 11H06.}
\keywords{Simultaneous approximation, parametric geometry of numbers, function fields, Minkowski successive minima, Mahler duality, compound bodies, Schmidt and Summerer $n$--systems, Pad\'e approximants, perfect systems, exponential function.}
\thanks{Work of D.~Roy was partially supported by NSERC}

\begin{abstract}
We transpose the parametric geometry of numbers, recently created
by Schmidt and Summerer, to fields of rational
functions in one variable and analyze, in that context, the
problem of simultaneous approximation to exponential
functions.
\end{abstract}

\maketitle

\hfill\hbox{\it \small In memory of Klaus Roth}

\section{Introduction}
\label{sec:intro}

Parametric geometry of numbers is a new theory, recently created
by Schmidt and Summerer \cite{SS2009,SS2013},
which unifies and simplifies many aspects of classical Diophantine
approximation, providing a handle on problems which previously
seemed out of reach (see also \cite{Sc1982}).
Our goal is to transpose this theory to fields of rational
functions in one variable and to analyze in that context the
problem of simultaneous approximation to exponential
functions.

Expressed in the setting of \cite{R2015}, the theory deals
with a general family of convex bodies of the form
\[
 \cC_\uu({\mathrm{e}}^q)
  =\{\ux\in\bR^n\,;\,
     \|\ux\|\le 1\ \text{and}\ |\uu\cdot\ux|\le {\mathrm{e}}^{-q}\}
 \quad (q\ge 0),
\]
where the norm is the Euclidean norm, $\uu$ is a fixed unit
vector in $\bR^n$, and $\uu\cdot\ux$ denotes the scalar product
of $\uu$ and $\ux$.  For each $i=1,\dots,n$, let $L_{\uu,i}(q)$
be the logarithm of the $i$-th minimum of $\cC_\uu({\mathrm{e}}^q)$ with
respect to $\bZ^n$, that is the minimum of all $t\in\bR$ such that
${\mathrm{e}}^t\cC_\uu({\mathrm{e}}^q)$ contains at least $i$ linearly independent
elements of $\bZ^n$.  Equivalently, this is the smallest $t$
for which the solutions $\ux$ in $\bZ^n$ of
\begin{equation}
 \label{intro:eq:Lui}
 \|\ux\|\le {\mathrm{e}}^t \et |\uu\cdot\ux|\le {\mathrm{e}}^{t-q}
\end{equation}
span a subspace of $\bQ^n$ of dimension at least $i$.
Define
\begin{equation}
 \label{intro:eq:mapLu}
 \begin{array}{rl}
  \uL_\uu\colon [0,\infty) &\longrightarrow \bR^n\\
  q &\longmapsto (L_{\uu,1}(q),\dots,L_{\uu,n}(q)).
 \end{array}
\end{equation}
Although the behavior of the maps $\uL_\uu$ may be complicated
(even for $n=2$, see \cite{Ke2016}), it happens that, modulo the
additive group of bounded functions from $[0,\infty)$ to
$\bR^n$, their classes are the same as those of simpler
functions called $n$-systems, defined as follows.

An $n$-system on $[0,\infty)$ is a map
$\uP=(P_1,\dots,P_n)\colon [0,\infty) \to \bR^n$ with the
property that, for each $q\ge 0$,
\begin{itemize}
\item[(S1)] we have $0\le P_1(q)\le\cdots\le P_n(q)$ and
  $P_1(q)+\cdots+P_n(q)=q$,
\smallskip
\item[(S2)] there exist $\epsilon>0$ and integers
  $k,\ell\in\{1,\dots,n\}$ such that
  \[
   \uP(t)=\begin{cases}
         \uP(q)+(t-q)\ue_\ell
          &\text{when \ $\max\{0,q-\epsilon\}\le t\le q$,}\\
         \uP(q)+(t-q)\ue_k
          &\text{when \ $q\le t\le q+\epsilon$,}
         \end{cases}
   \]
   where \ $\ue_1=(1,0,\dots,0),\,\dots\,,\,\ue_n=(0,\dots,0,1)$,
\item[(S3)] if $q>0$ and if the
  integers $k$ and $\ell$ from (S2) satisfy $k>\ell$,
  then $P_\ell(q)=\cdots=P_k(q)$.
\end{itemize}
By \cite[Theorems 8.1 and 8.2]{R2015}, there is
an explicit constant $C(n)$, depending only on $n$,
such that, for each unit vector $\uu\in\bR^n$, there
exists an $n$-system $\uP$ on $[0,\infty)$ such that
$\norm{\uL_\uu(q)-\uP(q)}\le C(n)$ for each $q\ge 0$,
and conversely, for each $n$-system $\uP$ on $[0,\infty)$,
there exists a unit vector $\uu\in\bR^n$ with the same
property.

Instead of $\bZ$, we work here with a ring of polynomials
$A=F[T]$ in one variable $T$ over an arbitrary field $F$.
We denote by $K=F(T)$ its field of quotients equipped with
the absolute value given by
\[
 \abs{f/g} = \exp(\deg(f)-\deg(g))
\]
for any $f,g\in A$ with $g\neq 0$ (using the convention
that $\deg(0)=-\infty$ and $\exp(-\infty)=0$). The role
of $\bR$ is now played by the completion $\Kv=F((1/T))$
of $K$ with respect to that absolute value.  The extension
of this absolute value to $\Kv$ is also denoted $\abs{\ }$.
We fix an integer $n\ge 2$ and still denote by $(\ue_1,\dots,\ue_n)$
the canonical basis of $\Kv^n$. We endow $\Kv^n$ with
the maximum norm
\[
 \norm{\ux}
 =\max\{\abs{x_1},\dots,\abs{x_n}\}
  \quad\text{if}\quad
  \ux=(x_1,\dots,x_n).
\]
We also use the non-degenerate bilinear form on $\Kv^n\times\Kv^n$
mapping a pair $(\ux,\uy)$ to
\begin{equation}
 \label{intro:eq:pairing}
 \ux\cdot\uy=x_1y_1+\cdots+x_ny_n
 \quad\text{if}\quad
  \ux=(x_1,\dots,x_n)
  \et
  \uy=(y_1,\dots,y_n).
\end{equation}
This identifies $\Kv^n$ with its dual \emph{isometrically}
in the sense that
\[
 \norm{\ux}=\max\{\abs{\ux\cdot\uy}\,;\, \uy\in\Kv^n
  \ \text{and}\ \norm{\uy}\le 1\}
\]
for any $\ux\in\Kv^n$.
For a given $\uu\in \Kv^n$ of norm $1$,
for each $i=1,\dots,n$ and each $q\ge 0$, we define
$L_{\uu,i}(q)$ to be the minimum of all $t\ge 0$ for which
the solutions $\ux$ in $A^n$ of the inequalities
\eqref{intro:eq:Lui}, interpreted in $\Kv^n$,
span a subspace of $K^n$ of dimension at least $i$.
This minimum exists as we may restrict to values of $t$
in $\bZ$ or in $q+\bZ$.
Then we form a map $\uL_\uu\colon[0,\infty)\to\bR^n$
as in \eqref{intro:eq:mapLu} above.  Our first main
result reads as follows.

\begin{theoremA}
The set of maps $\uL_\uu$ where $\uu$ runs through the
elements of $\Kv^n$ of norm $1$ is the same as
the set of $n$-systems $\uP$ on $[0,\infty)$ with
$\uP(q)\in\bZ^n$ for each integer $q\ge 0$.
\end{theoremA}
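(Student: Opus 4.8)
The plan is to prove the two inclusions separately, exploiting the fact that in the function-field setting the absolute value is discrete and ultrametric, which makes the geometry of numbers much more rigid than over $\bR$. First I would establish that every map $\uL_\uu$ is an $n$-system taking integer values at integer arguments. The integrality and the normalization $P_1(q)+\cdots+P_n(q)=q$ (property (S1)) should come from a function-field analogue of Minkowski's second theorem: for a lattice $A^n$ and the convex body $\cC_\uu(\mathrm{e}^q)$, the product of the successive minima is \emph{exactly} controlled because $A=F[T]$ is a principal ideal domain and the relevant ``volume'' is $\mathrm{e}^{-q}$; there is no error term, unlike the classical case. I would set this up via reduction theory for $A$-lattices in $\Kv^n$, choosing at each $q$ a basis of $A^n$ adapted to $\cC_\uu(\mathrm{e}^q)$ (an ``$A$-reduced'' basis), whose norms read off the successive minima on the nose. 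The monotonicity $L_{\uu,1}\le\cdots\le L_{\uu,n}$ is immediate from the definition.

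Next I would analyze how $\uL_\uu$ changes as $q$ increases. Because the absolute value on $\Kv$ takes values in $\mathrm{e}^{\bZ}$, each $L_{\uu,i}$ is a step-plus-linear function: on intervals where the set of minimal points is combinatorially stable, exactly one coordinate grows with slope $1$ (the one indexed by whichever $\ue_k$ direction the constraint $\abs{\uu\cdot\ux}\le\mathrm{e}^{t-q}$ forces), and the others stay constant. This is the content of property (S2); the local exponents $k,\ell$ come from comparing the adapted basis just before and just after $q$. Property (S3) — that when the ``incoming'' and ``outgoing'' directions satisfy $k>\ell$ the coordinates $P_\ell,\dots,P_k$ are all equal at $q$ — should follow from the fact that at a transition point several successive minima must collide: a point realizing the $\ell$-th minimum on the left and one realizing the $k$-th minimum on the right, together with the ultrametric inequality, force a flat spanning configuration. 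I expect this step, verifying (S2) and (S3) with the precise integer exponents, to be the main technical obstacle, since it requires a careful hands-on description of how an $A$-reduced basis of $\cC_\uu(\mathrm{e}^q)$ degenerates and regenerates.

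For the converse inclusion, given an arbitrary $n$-system $\uP$ with $\uP(q)\in\bZ^n$ at integers $q$, I would construct a unit vector $\uu\in\Kv^n$ by an explicit recursive/inductive procedure that builds the Laurent series coordinates of $\uu$ one ``event'' at a time. The idea is to read off from the combinatorial data of $\uP$ (the sequence of transition points and the pairs $(k,\ell)$ there) a nested sequence of best approximation vectors in $A^n$ with prescribed degrees, and then to let $\uu$ be the limit direction; the ultrametric structure guarantees convergence in $\Kv^n$ and that the successive minima of $\cC_\uu(\mathrm{e}^q)$ are exactly the prescribed $P_i(q)$, with \emph{no} bounded error. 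This is cleaner than Roy's real-number construction precisely because everything is exact here. A key lemma will be a ``gluing'' statement: any finite admissible initial segment of an $n$-system is realized by a vector $\uu$ over $A$ up to the relevant degree, and these partial realizations are compatible.

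Finally I would assemble the pieces: the first inclusion shows $\{\uL_\uu\}$ sits inside the set of integer-valued $n$-systems, the explicit construction shows every integer-valued $n$-system arises, so the two sets coincide. Throughout, the recurring theme — and the reason the statement is an exact equality rather than an equality-up-to-bounded-error — is that $\abs{\ }$ is discrete and $A$ is a PID, so Minkowski's theorems and Mahler duality hold without error terms in this setting.
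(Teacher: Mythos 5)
Your overall plan — Mahler's exact Minkowski theorem for (S1), discreteness of the valuation for the slope-$0$/slope-$1$ dichotomy in (S2), and a recursive ultrametric construction of $\uu$ for the converse — matches the paper's architecture. But there is a genuine gap in the argument for (S3), and it is the heart of the first inclusion.

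You propose that when the slope-$1$ index jumps from $\ell$ to $k>\ell$ at $q$, a point realizing the $\ell$-th minimum on the left, a point realizing the $k$-th minimum on the right, and the ultrametric inequality "force a flat spanning configuration" with $L_{\uu,\ell}(q)=\cdots=L_{\uu,k}(q)$. That does not follow from those two points alone; nothing about the ultrametric norm directly ties together minima with indices strictly between $\ell$ and $k$. The paper's proof of (S3) goes through Mahler's theory of \emph{compound bodies}: it introduces, for each $m$, the exterior powers $\tbigwedge^m\cC_\uu({\mathrm{e}}^q)$ and the associated first minima $L^{(m)}_{\uu,1}=L_{\uu,1}+\cdots+L_{\uu,m}$ and second-minus-first gaps $L^{(m)}_{\uu,2}-L^{(m)}_{\uu,1}=L_{\uu,m+1}-L_{\uu,m}$. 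The key observation (Lemma~\ref{lemma:Lm:main}) is that a single trajectory $L_\omega$ of a point $\omega\in\tbigwedge^m A^n$ can change slope only once, from $0$ to $1$; so if $L^{(m)}_{\uu,1}$ changes slope from $1$ to $0$ at $q$, the incoming and outgoing trajectories must belong to linearly independent $\alpha,\beta$, forcing $L^{(m)}_{\uu,2}(q)=L^{(m)}_{\uu,1}(q)$ and hence $L_{\uu,m}(q)=L_{\uu,m+1}(q)$. Applying this for each $m$ with $\ell\le m<k$ gives (S3). Without some version of this compound-body (or wedge-power) argument, your description of how an $A$-reduced basis "degenerates and regenerates" is unlikely to close the gap; you would need to show precisely this collision of consecutive minima across an entire range of indices, and the wedge powers are what linearize that statement. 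For the converse, your gluing idea is the right shape, but you should be aware that the paper's construction leans on two concrete lemmas you will need analogues of: a basis-modification lemma (Lemma~\ref{lemma:construction_bases}) that produces each new basis with a controlled norm and an orthogonality condition relative to $\uu$, and an exact formula (Lemma~\ref{lemma:dist_dual}) for the projective distance between consecutive approximations $\uu_i$, which is what makes the sequence Cauchy and lets you identify the limit's successive minima exactly.
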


As we will see in the next section, when $q$ belongs
to the set $\bN=\{0,1,2,\dots\}$ of non-negative
integers, the numbers $L_{\uu,1}(q),\dots,L_{\uu,n}(q)$
are the logarithms of the successive minima of a
convex body $\cC_\uu({\mathrm{e}}^q)$ of $\Kv^n$ with
respect to $A^n$, as defined by Mahler in
\cite{Ma1941}.  However, in terms of the inequalities
\eqref{intro:eq:Lui}, these functions naturally
extend to all real numbers $q\ge 0$.

The proof of Theorem A is similar to that of the
previously mentioned result over $\bQ$, but much simpler
in good part because, as Mahler proved in the same paper
\cite{Ma1941}, the analog of Minkowski's second convex body
theorem holds with an equality in that setting.
There is also the fact that the group of
isometries of $\Kv^n$ is an open set in $\GL_n(\Kv)$
thus in that sense much larger than the orthogonal
group of $\bR^n$. In Sections \ref{sec:constraints} and
\ref{sec:inv}, we give a
complete proof of Theorem A following \cite{R2015}.
The fact that each map $\uL_\uu$ is an $n$-system is
an adaptation of the argument of Schmidt and Summerer
in \cite[Section 2]{SS2013}.  In Section \ref{sec:duality},
we also connect the maps $\uL_\uu$ with the analogue
of those considered by these authors in \cite{SS2013}.

Because of the condition (S1), an $n$-system
$\uP=(P_1,\dots,P_n)$ on $[0,\infty)$ mapping
$\bN$ to $\bN^n$ satisfies
\[
 P_1(q)
  \le \left\lfloor\frac{q}{n}\right\rfloor
  \le \left\lceil\frac{q}{n}\right\rceil
  \le P_n(q)
 \quad
 \text{for each $q\in\bN$.}
\]
It happens that there is exactly one such $n$-system
for which
\begin{equation}
 \label{intro:eq:extremalP}
 P_1(q)=\left\lfloor\frac{q}{n}\right\rfloor
  \et
 P_n(q)=\left\lceil\frac{q}{n}\right\rceil
  \quad
 \text{for each $q\in\bN$.}
\end{equation}
When $q\equiv 0 \mod n$, such a system necessarily has
$P_1(q)=\dots=P_n(q)=q/n$.  Figure \ref{intro:fig}
shows the union of the graphs of $P_1,\dots,P_n$ over
an interval of the form $[mn,(m+1)n]$ with $m\in\bN$.
\begin{figure}[h]
     \begin{tikzpicture}[scale=1]
       \draw[-] (2,0.4)--(2,1);
       \draw[dotted] (2,1)--(2,1.6);
       \draw[->] (2,1.6)--(2,5);
       \draw[line width=0.04cm] (2.1,2)--(1.9,2) node[left]{$m$};
       \draw[dashed] (2.1,2)--(4,2);
       \draw[thick] (4,2)--(9.6,2);
       \draw[dotted,thick] (9.6,2)--(10.4,2);
       \draw[thick] (10.4,2)--(12,2);
       \draw[line width=0.04cm] (2.1,4)--(1.9,4) node[left]{$m+1$};
       \draw[dashed] (2.1,4)--(6,4);
       \draw[thick] (6,4)--(9.6,4);
       \draw[dotted,thick] (9.6,4)--(10.4,4);
       \draw[thick] (10.4,4)--(14,4);
       \draw[-] (1.9,0.5)--(2.5,0.5);
       \draw[dotted] (2.5,0.5)--(3.1,0.5);
       \draw[-] (3.1,0.5)--(9.6,0.5);
       \draw[dotted] (9.6,0.5)--(10.4,0.5);
       \draw[->] (10.4,0.5)--(15.5,0.5) node[below]{$q$};
       \draw[line width=0.04cm] (4,0.6)--(4,0.4) node[below]{$\phantom{1}mn\phantom{1}$};
       \draw[dashed] (4,0.6)--(4,2);
       \draw[line width=0.04cm] (6,0.6)--(6,0.4) node[below]{$mn+1$};
       \draw[dashed] (6,0.6)--(6,4);
       \draw[line width=0.04cm] (8,0.6)--(8,0.4) node[below]{$mn+2$};
       \draw[dashed] (8,0.6)--(8,4);
       \draw[line width=0.04cm] (12,0.6)--(12,0.4) node[below]{$mn+n-1\ $};
       \draw[dashed] (12,0.6)--(12,4);
       \draw[line width=0.04cm] (14,0.6)--(14,0.4) node[below]{$\phantom{1}mn+n$};
       \draw[dashed] (14,0.6)--(14,4);
       \foreach \q in {4,6,8,12}
         \node[circle, inner sep=0.75pt, fill] at (\q,2) {};
       \foreach \q in {6,8,12,14}
         \node[circle, inner sep=0.75pt, fill] at (\q,4) {};
       \foreach \q in {4,6,12}
         \draw[thick] (\q,2)--(\q+2,4);
       \draw[thick] (8,2)--(9,3);
       \draw[thick, dotted] (9,3)--(9.3,3.3);
       \draw[thick] (11,3)--(12,4);
       \draw[thick, dotted] (10.7,2.7)--(11,3);
       \node[above] at (4.5,2.8) {$P_n$};
       \node[above] at (6.5,2.8) {$P_{n-1}$};
       \node[above] at (8.5,2.8) {$P_{n-2}$};
       \node[above] at (10.5,2.8) {$P_{2}$};
       \node[above] at (12.5,2.8) {$P_1$};
     \end{tikzpicture}
\caption{The combined graph of the $n$-system satisfying ($\ref{intro:eq:extremalP}$).}
\label{intro:fig}
\end{figure}
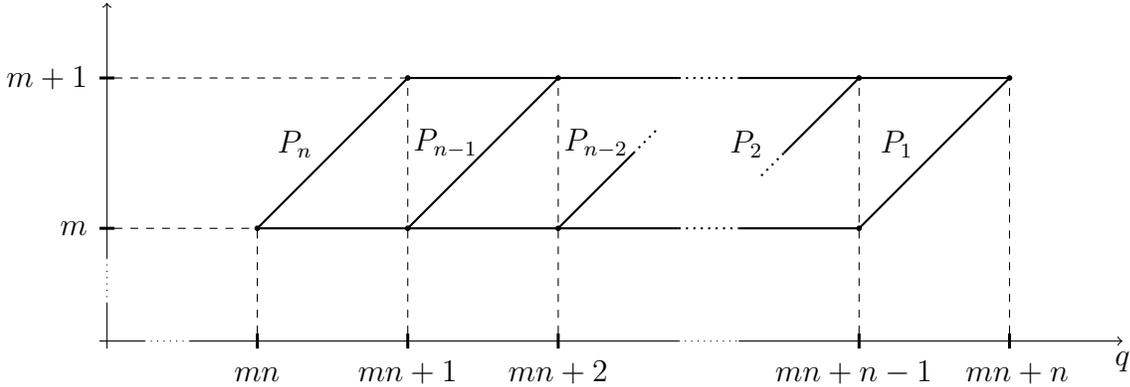
Over such an interval, the $i$-th component $P_i$
of $\uP$ is constant equal to $m$ on $[mn,mn+n-i]$, then
increases with slope $1$ on $[mn+n-i,mn+n-i+1]$ and finally
is constant equal to $m+1$ on $[mn+n-i+1,mn+n]$.

One can also characterize that system as the unique one for
which $P_n(q)-P_1(q)\le 1$ for each $q\ge 0$.  Our second main
result is the following.

\begin{theoremB}
Suppose that $F$ has characteristic zero. Let
$\omega_1,\dots,\omega_n$ be distinct elements of $F$,
and let
\[
 \uu
 = \left(
   {\mathrm{e}}^{\omega_1/T}, \dots, {\mathrm{e}}^{\omega_n/T}
   \right)
\quad
\text{where}
\quad
 {\mathrm{e}}^{\omega/T}
  =\sum_{j=0}^\infty \frac{\omega^j}{j!}T^{-j}
  \in F[[1/T]]
 \quad
 (\omega\in F).
\]
Then, we have $\norm{\uu}=1$ and the $n$-system
$\uP=\uL_\uu$ is characterized by the property
\eqref{intro:eq:extremalP}.
\end{theoremB}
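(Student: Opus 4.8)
The plan is as follows. Each series ${\mathrm{e}}^{\omega_j/T}=1+\omega_jT^{-1}+\cdots$ lies in $1+T^{-1}F[[T^{-1}]]$, hence has absolute value $1$, so $\norm{\uu}=1$. By Theorem A, $\uP:=\uL_\uu$ is an $n$-system taking integer vector values at integers, and since there is a unique $n$-system satisfying \eqref{intro:eq:extremalP}, it suffices to verify that $\uP$ has that property; as an $n$-system mapping $\bN$ to $\bN^n$ always satisfies $P_1(q)\le\lfloor q/n\rfloor$ and $P_n(q)\ge\lceil q/n\rceil$, we are reduced to proving
\[
 L_{\uu,1}(q)\ge\Big\lfloor\frac qn\Big\rfloor
 \et
 L_{\uu,n}(q)\le\Big\lceil\frac qn\Big\rceil
 \qquad(q\in\bN).
\]
The bridge to Pad\'e approximation is the following dictionary. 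Set $z=1/T$. If $\ux\in A^n$ has $\deg x_j\le m$ for all $j$, then the polynomials $\tilde x_j(z):=z^mx_j(1/z)\in F[z]$ have degree $\le m$ and
\[
 \uu\cdot\ux\ =\ z^{-m}\sum_{j=1}^n\tilde x_j(z)\,{\mathrm{e}}^{\omega_jz},
\]
so $\abs{\uu\cdot\ux}\le {\mathrm{e}}^{m-q}$ holds if and only if $\ord_z\big(\sum_j\tilde x_j(z){\mathrm{e}}^{\omega_jz}\big)\ge q$; moreover a family of such $\ux$ is linearly independent over $K$ exactly when the matrix of the $\tilde x_j$ has nonzero determinant in $F[z]$. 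Thus the solutions of \eqref{intro:eq:Lui} with $t\in\bZ$ correspond to Hermite--Pad\'e data for $({\mathrm{e}}^{\omega_1z},\dots,{\mathrm{e}}^{\omega_nz})$, and the two estimates will come from the perfectness of this system, which is where the hypothesis that $F$ has characteristic zero enters.

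For the lower bound, assume $\ux\in A^n\setminus\{0\}$ satisfies $\norm{\ux}\le {\mathrm{e}}^t$ and $\abs{\uu\cdot\ux}\le {\mathrm{e}}^{t-q}$ with $t<\lfloor q/n\rfloor$; then $m:=\lfloor t\rfloor\le\lfloor q/n\rfloor-1$, and the associated nonzero tuple $(\tilde x_1,\dots,\tilde x_n)$ of polynomials of degree $\le m$ satisfies $\ord_z\big(\sum_j\tilde x_j{\mathrm{e}}^{\omega_jz}\big)\ge q$. On the other hand, for any nonzero tuple of polynomials of degree $\le m$ one has $\ord_z\big(\sum_j\tilde x_j{\mathrm{e}}^{\omega_jz}\big)\le n(m+1)-1$; this order bound is the elementary half of perfectness, proved by induction on $n$ by differentiating $m+1$ times and invoking $\omega_i\neq\omega_j$ and characteristic zero. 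Since $n(m+1)-1\le n\lfloor q/n\rfloor-1<q$, we obtain a contradiction, so $L_{\uu,1}(q)\ge\lfloor q/n\rfloor$.

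For the upper bound we may assume $q\ge1$; put $m=\lceil q/n\rceil\ge1$, and for $\ell=1,\dots,n$ let $\up^{(\ell)}\in\bN^n$ be the multi-index whose $\ell$-th entry is $m$ and whose remaining entries are $m-1$, so that $\abs{\up^{(\ell)}}$ (the sum of its entries) equals $n(m-1)+1$. The perfectness of the exponential system provides, for each $\ell$, a nonzero tuple $(\cP_{\ell,1},\dots,\cP_{\ell,n})$ of polynomials with $\deg\cP_{\ell,j}$ equal to the $j$-th entry of $\up^{(\ell)}$ and with
\[
 \ord_z\Big(\sum_{j=1}^n\cP_{\ell,j}(z)\,{\mathrm{e}}^{\omega_jz}\Big)=\abs{\up^{(\ell)}}+n-1=nm\ge q.
\]
Running the dictionary backwards with this value of $m$, each such tuple yields $\ux^{(\ell)}\in A^n$ with $\norm{\ux^{(\ell)}}\le {\mathrm{e}}^m$ and $\abs{\uu\cdot\ux^{(\ell)}}\le {\mathrm{e}}^{m-q}$. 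Finally, in the Leibniz expansion of $\det(\cP_{\ell,j})_{1\le\ell,j\le n}$ the identity permutation contributes a term of degree $\sum_\ell m=nm$ with nonzero leading coefficient, whereas every other permutation moves at least two indices and hence contributes a term of degree at most $(n-2)m+2(m-1)=nm-2$; therefore the determinant is a nonzero polynomial in $F[z]$, so $\ux^{(1)},\dots,\ux^{(n)}$ are linearly independent over $K$. Consequently $t=m=\lceil q/n\rceil$ already produces $n$ linearly independent solutions of \eqref{intro:eq:Lui}, that is $L_{\uu,n}(q)\le\lceil q/n\rceil$.

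The only substantial obstacle is the perfectness of $({\mathrm{e}}^{\omega_1z},\dots,{\mathrm{e}}^{\omega_nz})$ in the strong form used above: beyond the elementary order upper bound, one needs the \emph{existence}, for every multi-index, of Hermite--Pad\'e approximants with the prescribed exact component degrees and the precise remainder order. This is Mahler's notion of a perfect system, classically known for the exponential function (for instance via Hermite's integral representation, or an equivalent recursive construction), and it is exactly here that characteristic zero is indispensable; granting it, the rest of the proof is the bookkeeping translating between the parameter $q$ of the parametric geometry of numbers and the variable $z=1/T$ of the Pad\'e problem.
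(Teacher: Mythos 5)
Your proof is correct, but it takes a genuinely different route from the paper's. The paper only establishes the lower bound $L_{\uu,1}(q)\ge\lfloor q/n\rfloor$ (via normality of the exponential system at the diagonal indices $(t+1,\dots,t+1)$, much as you do) and then obtains everything else by a bootstrap: at $q=mn$ that lower bound forces, via (S1), all coordinates of $\uL_\uu(mn)$ to equal $m$; since $L_{\uu,1}$ and $L_{\uu,n}$ are monotone, it follows for $mn\le q\le(m+1)n$ that $L_{\uu,n}(q)-L_{\uu,1}(q)\le L_{\uu,n}((m+1)n)-L_{\uu,1}(mn)=1$, and the paper then invokes the characterization (mentioned right before Theorem B) of the extremal $n$-system as the unique one with $P_n-P_1\le 1$. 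This way the upper bound $L_{\uu,n}\le\lceil q/n\rceil$ comes for free and no construction of explicit realizers is needed. You instead prove the upper bound directly by producing $n$ linearly independent Hermite--Pad\'e approximants $\cP_{\ell,\bullet}$ at near-diagonal multi-indices and checking that $\det(\cP_{\ell,j})\ne0$ via a leading-term analysis. That argument is sound (the key fact $\deg\cP_{\ell,\ell}=m$ exactly, which you tacitly use, does follow, e.g.\ from normality at $(m,\dots,m)$, since otherwise all degrees would be $\le m-1$ while the order would exceed $nm-1$). The trade-off is that the paper's route proves a sharper generalization (its Section 5 theorem) under the weaker hypothesis of normality at diagonal multi-indices only, whereas your upper-bound construction, as phrased, appeals to full perfectness; on the other hand, your route has the virtue of exhibiting the points realizing the successive minima, which is exactly the content the paper develops as a supplementary remark after its proof.

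One small inaccuracy worth flagging: the \emph{existence} of a nonzero tuple with the prescribed degree bounds and remainder order at least $|\up^{(\ell)}|+n-1$ is pure linear algebra (the paper's display \eqref{prefect:eq:alglin}) and needs no perfectness at all; what perfectness/normality buys you is that the order is exactly that, and (more to the point for your determinant step) that the top-degree coefficient of $\cP_{\ell,\ell}$ cannot vanish. Being precise about which normality statements are used where would let your argument apply under the same weakened hypothesis as the paper's general theorem.
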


As we will show in section \ref{sec:perfect}, this result in fact
extends to all perfect systems of series in the sense
of Mahler-Jager \cite{Ma1968,Ja1964}.

In 1964, A.~Baker showed that, in the notation of Theorem B,
the $n$-tuple
$\big( {\mathrm{e}}^{\omega_1/T}, \dots, {\mathrm{e}}^{\omega_n/T} \big)$
provides a counterexample to the analogue in $\bC((1/T))$ of
a conjecture of Littlewood.  In Section \ref{sec:adelic},
we generalize this result to several places of $\bC(T)$.

\section{Constraints on the successive minima}
\label{sec:constraints}

In this section, we prove that the maps $\uL_\uu$ which
appear in Theorem A are $n$-systems.  The argument
is based on the ideas of Schmidt and Summerer
in \cite{SS2013}, but follows the presentation in
\cite[\S 2]{R2015}.

\subsection{Convex bodies}

We fix an integer $n\ge 1$ and denote by
\[
 \cOv=\{ x\in\Kv\,;\, \abs{x}\le 1\}=F[[1/T]]
\]
the ring of integers of $\Kv$.  A \emph{convex body}
of $\Kv^n$ is simply a free sub-$\cOv$-module of $\Kv^n$
of rank $n$. This seemingly narrow notion, the
analog of a parallelotope, is explained by Mahler
in \cite{Ma1941}.  For example, the unit ball $\cOv^n$
of $\Kv^n$ for the maximum norm is a convex body.

Let $\cC$ be an arbitrary convex body of $\Kv^n$.
Its \emph{volume} $\vol(\cC)$ is defined
as the common value $\abs{\det(\psi)}$ attached
to all $\Kv$-linear automorphisms $\psi$ of $\Kv^n$
for which $\psi(\cOv^n)=\cC$.
For each $i=1,\dots,n$, the $i$-th minimum of $\cC$ (with respect
to $A^n$) is defined as the smallest number $\abs{\rho}$
where $\rho$ runs through the elements of $\Kv^\times$ for which
the dilated convex body
\[
 \rho\cC=\{\rho\ux\,;\,\ux\in\cC\}
\]
contains at least $i$ linearly independent elements
of $A^n$.  Since $\rho\cC$ depends only on the class
$\rho\cOv^\times$ in $\Kv^\times/\cOv^\times$, we may restrict to
elements of the form $\rho=T^a$ with $a\in\bZ$.  In this context,
Mahler's extension of Minkowski's convex body theorem
in \cite[\S 9]{Ma1941}, reads as follows (compare with
the version proved by J.~Thunder over an arbitrary function
field in \cite{Th1995}).

\begin{theorem}
\label{thm:Mahler:Minkowski}
For $i=1,\dots,n$, let $\lambda_i={\mathrm{e}}^{\mu_i}$ be the $i$-th minimum
of $\cC$.  Then we have
\[
 \lambda_1\cdots\lambda_n\vol(\cC)=1.
\]
Moreover, there exists a basis $(\ux_1,\dots,\ux_n)$ of $A^n$ over $A$
such that $\ux_i\in T^{\mu_i}\cC$ for $i=1,\dots,n$.
\end{theorem}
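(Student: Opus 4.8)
The plan is to exploit the ultrametric structure of $\Kv$, which makes the analog of Minkowski's theorems exact rather than merely approximate. First I would reduce to the case $\cC = \cOv^n$ by a change of coordinates: choosing any $\Kv$-linear automorphism $\psi$ with $\psi(\cOv^n) = \cC$, the successive minima of $\cC$ with respect to $A^n$ equal the successive minima of $\cOv^n$ with respect to the lattice $\psi^{-1}(A^n)$, and $\vol(\cC) = \abs{\det\psi}$. So it suffices to prove, for an arbitrary free $A$-module $M$ of rank $n$ in $\Kv^n$ (playing the role of $\psi^{-1}(A^n)$), that the product of the successive minima of $\cOv^n$ with respect to $M$ equals the covolume of $M$, i.e. $\abs{\det(\uy_1,\dots,\uy_n)}$ for any $A$-basis of $M$; this covolume is well-defined since a change of $A$-basis has determinant in $F^\times$, which has absolute value $1$.

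The core is then a filtration argument. For each $t \in \bZ$, let $M_t = \{ \uy \in M : \norm{\uy} \le {\mathrm e}^{t} \}$; this is a free $A$-submodule (an intersection of $M$ with an $\cOv$-module), and $M_0 = M \cap \cOv^n$. Set $\mu_i$ so that $\lambda_i = {\mathrm e}^{\mu_i}$; by definition $\mu_i$ is the least integer $t$ with $\dim_K \mathrm{span}_K(M_{-t}) \ge i$ — here I use the fact, noted in the excerpt, that we may restrict dilations to $\rho = T^a$, and that $T^{-t}\cOv^n \supseteq$ some $i$ independent points of $M$ iff $M_{-t}$ has rank $\ge i$. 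The decisive ultrametric input is that whenever $\uy_1,\dots,\uy_k \in \Kv^n$ are $K$-linearly independent with $\norm{\uy_j}$ pairwise equal to the successive values ${\mathrm e}^{-\mu_j}$, one can adjust them by $F$-linear combinations so that they become part of an $\cOv$-basis of the $\cOv$-module they generate — the strong triangle inequality lets us "Gram–Schmidt" without any loss. This produces an $A$-basis $(\ux_1,\dots,\ux_n)$ of $M$ with $\norm{\ux_i} = {\mathrm e}^{-\mu_i}$ exactly, which after translating back through $\psi$ gives the asserted basis $\ux_i \in T^{\mu_i}\cC$, proving the "moreover" clause.

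For the volume identity, once we have an $A$-basis $(\ux_1,\dots,\ux_n)$ of $M$ that is simultaneously adapted to the norm filtration — meaning the reductions of $T^{\mu_i}\ux_i$ modulo lower-norm terms form an $F$-basis of the appropriate subquotients of $\cOv^n$ — the determinant $\det(\ux_1,\dots,\ux_n)$ is, up to an element of $F^\times$, equal to $\prod_i T^{-\mu_i}$, so $\covol(M) = \abs{\det} = {\mathrm e}^{-\sum \mu_i} = \prod_i \lambda_i^{-1}$, which rearranges to $\lambda_1\cdots\lambda_n \vol(\cC) = 1$. I expect the main obstacle to be the adapted-basis construction: one must show that the rank of $M_t$ jumps by the right amounts and that at each jump a new basis vector can be chosen with norm exactly ${\mathrm e}^{t}$, carefully tracking that the $F$-vector spaces $M_t \cap \cOv^n \big/ (M_t \cap T\cOv^n + \text{stuff})$ have the expected dimensions. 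This is where Mahler's original argument in \cite[\S 9]{Ma1941} does the real work, and I would follow it closely, the simplification over the number field case coming entirely from the exactness $\covol = \prod \lambda_i^{-1}$ with no Minkowski-type constants intervening.
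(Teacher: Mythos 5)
The paper does not itself prove Theorem \ref{thm:Mahler:Minkowski}; it is stated as Mahler's theorem with the citation \cite[\S 9]{Ma1941} and a pointer to Thunder \cite{Th1995}, so there is no in-paper proof to compare against — the question is whether your sketch would actually work. Your reduction to $\cC=\cOv^n$ against the free $A$-lattice $M=\psi^{-1}(A^n)$ and the norm filtration $M_t$ are the right starting point, but there is a consistent sign error running through the details: with $M_t=\{\uy\in M:\norm{\uy}\le{\mathrm e}^t\}$ the set $M_{-t}$ shrinks as $t$ grows, so ``the least $t$ with $\dim_K\mathrm{span}_K(M_{-t})\ge i$'' is not well posed — it should read $M_t$; the adapted basis vectors should satisfy $\norm{\ux_i}={\mathrm e}^{\mu_i}$, not ${\mathrm e}^{-\mu_i}$, and the determinant has absolute value ${\mathrm e}^{\sum\mu_i}=\prod\lambda_i$, matching $\covol(\psi^{-1}(A^n))=\abs{\det\psi}^{-1}=\vol(\cC)^{-1}$. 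As written, your final line equates $\covol(M)$ to $\prod\lambda_i^{-1}$ and the claimed ``rearrangement'' to $\lambda_1\cdots\lambda_n\vol(\cC)=1$ does not follow.

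The more serious problem is that the step you call the ``decisive ultrametric input'' is vacuous as stated: any $\Kv$-linearly independent $\uy_1,\dots,\uy_k$ are automatically a free $\cOv$-basis of $\cOv\uy_1+\cdots+\cOv\uy_k$, so ``adjust them by $F$-linear combinations so that they become part of an $\cOv$-basis of the $\cOv$-module they generate'' establishes nothing. What actually has to be proved — and it is the whole content of both halves of the theorem — is that vectors realizing the successive minima can be modified to form an $A$-basis of $M$ (not merely of a sublattice) which is in addition \emph{orthogonal} for the sup-norm in the sense of \S\ref{ssec:isometries}, so that Lemma \ref{lemma:Hadamard} upgrades Hadamard's inequality to the equality $\abs{\det(\ux_1,\dots,\ux_n)}=\prod_i\norm{\ux_i}$ needed for the volume identity. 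Neither the $A$-basis property nor the orthogonality follows from $F$-linear combinations; one needs $A$-linear modifications and $\cOv$-coefficient reductions in an inductive construction of exactly the kind carried out in Mahler's \S 9, which you explicitly defer to. So the proposal is an honest map of the territory — deferring the same work the paper defers to its citation — rather than a self-contained proof.
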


The last property is expressed by saying that $\ux_1,\dots,\ux_n$
\emph{realize} the successive minima
$\lambda_1\le\dots\le\lambda_n$ of\/ $\cC$.

Mahler defines the \emph{dual} or \emph{polar body} to $\cC$ by
\[
 \cC^*=\{ \uy\in \Kv^n \,;\, \abs{\ux\cdot\uy}\le 1 \text{ for all } \ux\in\cC \}.
\]
This is a convex body of $\Kv^n$ with $\vol(\cC^*)=\vol(\cC)^{-1}$.
On the algebraic counterpart, for any basis
$(\ux_1,\dots,\ux_n)$ of $A^n$, there is a dual basis
$(\ux^*_1,\dots,\ux^*_n)$ of $A^n$ characterized by
$\ux^*_i\cdot\ux_j=\delta_{i,j}$ $(1\le i\le j\le n)$.
In \cite[\S 10]{Ma1941}, Mahler shows the following.

\begin{theorem}
\label{thm:Mahler:duality}
In the notation of the previous theorem,
the successive minima of\/ $\cC^*$ are
$\lambda_n^{-1}\le\cdots\le\lambda_1^{-1}$,
realized by the elements of the dual basis
to $(\ux_1,\dots,\ux_n)$ listed in reverse
order $\ux^*_n,\dots,\ux^*_1$.
\end{theorem}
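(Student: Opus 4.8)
The plan is to derive the whole statement from a single ``adapted basis'' that simultaneously describes $\cC$, $\cC^*$ and all the successive minima, so that the duality becomes bookkeeping with the uniformizer $1/T$ of $\cOv$.

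First I would record that $\lambda_i=e^{\mu_i}$ with $\mu_i\in\bZ$, since $\abs{\Kv^\times}=e^{\bZ}$, and use Theorem~\ref{thm:Mahler:Minkowski} to fix a basis $(\ux_1,\dots,\ux_n)$ of $A^n$ realizing the minima, so that $\uv_i:=T^{-\mu_i}\ux_i\in\cC$ for each $i$. The key preliminary step is a rigidity lemma: the free $\cOv$-module $\cC_0:=\cOv\uv_1+\cdots+\cOv\uv_n$ equals $\cC$. Indeed $\cC_0\subseteq\cC$, and since $(\ux_1,\dots,\ux_n)$ is an $A$-basis of $A^n$ the matrix with these columns lies in $\GL_n(A)$, hence has determinant a unit of $A=F[T]$, i.e.\ of absolute value $1$; therefore $\vol(\cC_0)=\prod_i\abs{T^{-\mu_i}}=\prod_i\lambda_i^{-1}=\vol(\cC)$ by Theorem~\ref{thm:Mahler:Minkowski}. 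Since $\cOv=F[[1/T]]$ is a discrete valuation ring, a free rank-$n$ submodule of the free rank-$n$ module $\cC$ with the same volume must be all of $\cC$ (its transition matrix has unit determinant, hence is invertible over $\cOv$). This step uses the equality in Theorem~\ref{thm:Mahler:Minkowski} crucially; over $\bR$ one has only inequalities, which is why the archimedean version of the result is weaker.

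Next I would compute $\cC^*$. By $\cOv$-bilinearity and the ultrametric inequality, $\uy\in\cC^*$ iff $\abs{\uv_i\cdot\uy}\le 1$ for all $i$; writing $\uy=\sum_j c_j\ux_j^*$ and using $\ux_i\cdot\ux_j^*=\delta_{ij}$ (symmetry of the form and the defining property of the dual basis) gives $\uv_i\cdot\uy=T^{-\mu_i}c_i$, so the condition is $\abs{c_i}\le\lambda_i$ for all $i$. Hence $\cC^*=\bigoplus_j\cOv\,\uw_j$ with $\uw_j:=T^{\mu_j}\ux_j^*$, which is consistent with $\vol(\cC^*)=\prod_j\lambda_j=\vol(\cC)^{-1}$. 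Now the successive minima of $\cC^*$ can be read off: a point of $A^n$, written $\sum_j a_j\ux_j^*$ with $a_j\in A$ (as $(\ux_j^*)$ is an $A$-basis of $A^n$), lies in $\sigma\cC^*$ iff $\abs{a_j}\le\abs{\sigma}\lambda_j$ for all $j$; since the nonzero elements of $A=F[T]$ have absolute value $\ge 1$, the $K$-span of $A^n\cap\sigma\cC^*$ is exactly the span of those $\ux_j^*$ with $\abs{\sigma}\lambda_j\ge 1$, and each such $\ux_j^*$ itself lies in $\sigma\cC^*$. Therefore $\sigma\cC^*$ contains $i$ linearly independent elements of $A^n$ iff $\#\{j:\abs{\sigma}\ge\lambda_j^{-1}\}\ge i$, i.e.\ iff $\abs{\sigma}\ge\lambda_{n+1-i}^{-1}$, the $i$-th smallest term of the decreasing sequence $\lambda_1^{-1}\ge\cdots\ge\lambda_n^{-1}$. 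This bound is attained at $\sigma=T^{-\mu_{n+1-i}}$, so the $i$-th minimum of $\cC^*$ is $\lambda_{n+1-i}^{-1}$, giving the sequence $\lambda_n^{-1}\le\cdots\le\lambda_1^{-1}$. At this value of $\sigma$ the vectors $\ux_j^*$ with $\lambda_j\ge\lambda_{n+1-i}$ — in particular $\ux_n^*,\ux_{n-1}^*,\dots,\ux_{n+1-i}^*$ — lie in $\sigma\cC^*$ and are linearly independent, so the basis $(\ux_n^*,\dots,\ux_1^*)$ of $A^n$ realizes the successive minima of $\cC^*$, which completes the proof.

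I expect the rigidity lemma $\cC_0=\cC$ to be the only nontrivial point; everything afterward is elementary manipulation with the maximum norm. As an alternative route to the \emph{values} of the minima that bypasses the adapted basis (though not the ``realized by'' assertion), one can argue in the classical way: for $\ux\in\sigma\cC\cap A^n$ and $\uy\in\tau\cC^*\cap A^n$ one has $\ux\cdot\uy\in A$ and $\abs{\ux\cdot\uy}\le\abs{\sigma\tau}$, so $\abs{\sigma\tau}<1$ forces $\ux\cdot\uy=0$; choosing $\abs{\sigma}$ equal to the $i$-th minimum of $\cC$ and $\abs{\tau}$ to the $(n+1-i)$-th minimum of $\cC^*$ would then make the span of the lattice points of $\sigma\cC$ (dimension $\ge i$) orthogonal, under the non-degenerate form, to that of $\tau\cC^*$ (dimension $\ge n+1-i$), which is impossible. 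Hence the $i$-th minimum of $\cC$ times the $(n+1-i)$-th minimum of $\cC^*$ is $\ge 1$ for every $i$, and multiplying these over $i$ and invoking $\vol(\cC)\vol(\cC^*)=1$ together with Theorem~\ref{thm:Mahler:Minkowski} forces equality throughout.
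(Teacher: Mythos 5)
The paper does not prove this statement; it is quoted verbatim from Mahler \cite[\S 10]{Ma1941}. So there is no in-paper proof to compare with, but your argument is a correct, self-contained proof, and it is essentially the natural non-archimedean one. The decisive step — that $\cC$ \emph{equals} the $\cOv$-module generated by $T^{-\mu_1}\ux_1,\dots,T^{-\mu_n}\ux_n$ (rather than merely containing it, as in the real case) — is exactly where the equality case of Theorem~\ref{thm:Mahler:Minkowski} and the DVR structure of $\cOv$ come in, and you flag this correctly. Once $\cC=\bigoplus_i\cOv\,T^{-\mu_i}\ux_i$, the dual $\cC^*=\bigoplus_j\cOv\,T^{\mu_j}\ux_j^*$ falls out of the ultrametric triangle inequality, and reading off the minima of $\cC^*$ from the fact that nonzero elements of $A$ have absolute value $\ge 1$ is exactly the bookkeeping you describe, including the identification of $\ux_n^*,\dots,\ux_1^*$ as realizing vectors (they form a basis of $A^n$ because the inverse transpose of a matrix in $\GL_n(A)$ again lies in $\GL_n(A)$). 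Your alternative ``pairing'' argument for the values alone is also correct as a proof by contradiction: if $\abs{\sigma\tau}<1$ then the two lattice spans are mutually annihilated by the nondegenerate form, so their dimensions sum to at most $n$, and multiplying the resulting $n$ inequalities $\lambda_i\lambda^*_{n+1-i}\ge 1$ against $\vol(\cC)\vol(\cC^*)=1$ forces equality. In short: a complete and correct proof, filling in a reference that the paper leaves to Mahler.
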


Mahler's original theory of compound bodies (over $\bR$)
also extends to the present
setting.  To state the result, fix $m\in\{1,\dots,n\}$ and put
$N=\binom{n}{m}$.  We identify $\bigwedge^m\Kv^n$ with $\Kv^N$
via a linear map sending the $N$ products
$\ue_{i_1}\wedge\cdots\wedge\ue_{i_m}$ with
$1\le i_1<\cdots<i_m\le n$ to the elements of the canonical
basis of $\Kv^N$ in some order.  Then, the
sub-$A$-module $\bigwedge^m A^n$ of $\bigwedge^m\Kv^n$
generated by the products $\uv_1\wedge\cdots\wedge\uv_m$ with
$\uv_1,\dots,\uv_m\in A^n$ is identified with $A^N$.
The \emph{$m$-th compound body} of $\cC$, denoted
$\bigwedge^m\cC$, is the sub-$\cOv$-module
of $\bigwedge^m\Kv^n$ spanned by the products
$\uv_1\wedge\cdots\wedge\uv_m$ with
$\uv_1,\dots,\uv_m\in\cC$.  This is a convex
body in that space and an adaptation of the argument
of Mahler in \cite{Ma1955} yields the following.

\begin{theorem}
\label{thm:Mahler:compound}
In the notation of the previous theorems,
the successive minima of $\bigwedge^m\cC$ are
the $N$ products $\lambda_{i_1}\cdots\lambda_{i_m}$
with $1\le i_1<\cdots<i_m\le n$, listed in monotone
increasing order.  They are realized by the products
$\ux_{i_1}\wedge\cdots\wedge\ux_{i_m}$ listed in
the corresponding order.
\end{theorem}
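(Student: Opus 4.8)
The plan is to reduce the statement about the compound body $\bigwedge^m\cC$ to the case of a single diagonal transformation, exactly as in Mahler's original treatment over $\bR$, but taking advantage of the fact that here the analogue of Minkowski's second theorem (Theorem~\ref{thm:Mahler:Minkowski}) is an exact equality and that the successive minima are realized by an honest $A$-basis. First I would fix a basis $(\ux_1,\dots,\ux_n)$ of $A^n$ realizing the successive minima $\lambda_i=\abs{T^{\mu_i}}$ of $\cC$, as provided by Theorem~\ref{thm:Mahler:Minkowski}, and let $\psi\in\GL_n(\Kv)$ be the automorphism sending $\ue_i$ to $T^{-\mu_i}\ux_i$; then $\psi(\cOv^n)=\cC'$ is a convex body "between" $\cC$ and a diagonal dilate of $\cOv^n$ in the sense that $\cC'\subseteq\cC$ while $\cC'$ contains $T^{-\mu_i}\ux_i$. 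Actually the cleanest route is to choose coordinates in which $\ux_i=\ue_i$ (apply the element of $\GL_n(A)$ carrying the basis $(\ux_i)$ to the canonical basis — this preserves $A^n$, $\bigwedge^m A^n$, volumes, and all norms up to the ambiguity already built into the definition of minima), so that after this change $\cC$ satisfies $T^{-\mu_i}\ue_i\in\cC$ for each $i$.

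The key point is then the sandwich $\cD:=\bigoplus_i T^{-\mu_i}\cOv \subseteq \cC \subseteq \cD$: the left inclusion holds because each $T^{-\mu_i}\ue_i\in\cC$ and $\cC$ is an $\cOv$-module of rank $n$, and the right inclusion holds because $\abs{\mu}$-scaling any element of $\cC$ beyond $T^{\mu_i}$ in the $i$-th direction would produce an independent lattice vector contradicting minimality — more precisely, $\vol(\cC)=\prod\lambda_i^{-1}=\vol(\cD)$ by Theorem~\ref{thm:Mahler:Minkowski}, and a rank-$n$ free $\cOv$-submodule of $\cD$ with the same volume must equal $\cD$. Hence $\cC=\cD$ is diagonal, $\cC=\psi(\cOv^n)$ with $\psi=\mathrm{diag}(T^{-\mu_1},\dots,T^{-\mu_n})$. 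Now $\bigwedge^m$ of a diagonal map is diagonal: $\bigwedge^m\psi$ acts on the basis vector indexed by $\{i_1<\dots<i_m\}$ by multiplication by $T^{-(\mu_{i_1}+\dots+\mu_{i_m})}$. Therefore $\bigwedge^m\cC=(\bigwedge^m\psi)(\cOv^N)=\bigoplus_{I} T^{-\mu_I}\cOv$ where $\mu_I=\sum_{i\in I}\mu_i$, and this is visibly a convex body whose successive minima, again by the exact form of Theorem~\ref{thm:Mahler:Minkowski} (applied now in $\Kv^N$), are the numbers $\abs{T^{\mu_I}}=\prod_{i\in I}\lambda_i$ in increasing order, realized by the standard basis vectors $\ue_I$, which pull back under the original coordinate change to $\ux_{i_1}\wedge\dots\wedge\ux_{i_m}$.

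The one genuinely delicate step is verifying that passing to the $m$-th exterior power is compatible with the identification of $\bigwedge^m A^n$ with $A^N$ and of $\bigwedge^m\cOv^n$ with $\cOv^N$ — i.e. that the chosen linear identification $\bigwedge^m\Kv^n\cong\Kv^N$ really does send $\bigwedge^m\cOv^n$ onto $\cOv^N$ and $\bigwedge^m A^n$ onto $A^N$, and that $\bigwedge^m\psi$ computed in these coordinates is the asserted diagonal matrix. This is where one must be slightly careful about the non-archimedean norm: one needs that for a diagonal $\psi$ the exterior power has no "cross terms" inflating the norm, which is immediate here, and that a general coordinate change in $\GL_n(A)$ induces an element of $\GL_N(A)$ fixing $A^N$, which follows from functoriality of $\bigwedge^m$. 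I would dispatch this by the standard Cauchy–Binet / determinant-of-minors computation, noting that $\det(\bigwedge^m\phi)=\det(\phi)^{N m/n}$ and hence $\vol(\bigwedge^m\cC)=\vol(\cC)^{Nm/n}=\prod_I\lambda_I^{-1}$, which both confirms the volume bookkeeping and, combined with the explicit diagonal form, forces the minima to be exactly the $\lambda_I$ with no loss. Everything else is the same formal argument Mahler gives in \cite{Ma1955}, only cleaner because there is no constant to track.
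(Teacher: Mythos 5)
Your argument is correct, and it is worth emphasizing that the paper itself gives no proof of this theorem — it simply asserts that ``an adaptation of the argument of Mahler in \cite{Ma1955} yields the following.'' So your writeup is a self-contained substitute for that unwritten adaptation, and it captures exactly the simplification that makes the function-field case painless. The decisive observation is the one you isolate: after moving a minimum-realizing $A$-basis $(\ux_1,\dots,\ux_n)$ to the canonical basis by an element of $\GL_n(A)$ (which preserves $A^n$, $\bigwedge^m A^n$, volumes, and all successive minima, since the determinant lies in $F^\times\subseteq\cOv^\times$), the inclusion $\cD=\bigoplus_i T^{-\mu_i}\cOv\,\ue_i\subseteq\cC$ together with $\vol(\cD)=\prod\lambda_i^{-1}=\vol(\cC)$ forces $\cC=\cD$ by the elementary-divisor argument over the DVR $\cOv$. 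In other words, in this setting the body $\cC$ is not merely sandwiched between diagonal bodies up to a multiplicative constant (as in Mahler's real-variable argument), it \emph{is} diagonal in the right coordinates; from there $\bigwedge^m\cC=\bigoplus_I T^{-\mu_I}\cOv\,\ue_I$ is immediate, and the minima and the vectors realizing them can be read off. This is a more structural route than a line-by-line translation of Mahler's two-sided estimates, and it is the right way to exploit the equality in Theorem~\ref{thm:Mahler:Minkowski}.

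Two small points to tighten. First, the volume lemma is stated with the containment reversed: you have $\cD\subseteq\cC$, so what you need is that a free rank-$n$ $\cOv$-submodule $\cD$ of the free rank-$n$ $\cOv$-module $\cC$ with $\vol(\cD)=\vol(\cC)$ must equal $\cC$; the content is the same, but as written (``submodule of $\cD$'') it presupposes the opposite inclusion. Second, the informal remark in the first paragraph about $\cC'$ being ``between $\cC$ and a diagonal dilate of $\cOv^n$'' is a false start you yourself abandon, and the hand-wavy justification of the right inclusion (``scaling beyond $T^{\mu_i}$ would produce an independent lattice vector'') is not needed once you invoke the volume argument; you should simply drop both and go straight to the one inclusion plus the volume computation. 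With that cleanup, the proof is complete and correct, and the determinant identity $\det(\bigwedge^m\phi)=\det(\phi)^{\binom{n-1}{m-1}}$ you quote (with $\binom{n-1}{m-1}=Nm/n$) is indeed the right bookkeeping check that $\prod_I\lambda_I=\big(\prod_i\lambda_i\big)^{Nm/n}$ matches $\vol(\bigwedge^m\cC)^{-1}$.
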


In particular, if $1\le m<n$, the first two minima of
$\bigwedge^m\cC$ are $\lambda_1\cdots\lambda_m$ and
$\lambda_1\cdots\widehat{\lambda_m}\lambda_{m+1}$.

 \subsection{Isometries and orthogonality}
\label{ssec:isometries}

Let $n\ge 1$ be an integer. An \emph{isometry} of $\Kv^n$
is a norm-preserving $\Kv$-linear map from $\Kv^n$ to itself.
We say that subspaces $V_1,\dots,V_\ell$ of $\Kv^n$ are
\emph{(topologically) orthogonal} if
\[
 \norm{\uv_1+\dots+\uv_\ell}
   =\max\{\norm{\uv_1},\dots,\norm{\uv_\ell}\}
\]
for any choice of $\uv_i\in V_i$ for $i=1,\dots,\ell$.
We write
\[
 \Kv^n=V_1\perptop\cdots\perptop V_\ell
\]
when $\Kv^n$ is the direct sum of such subspaces.  We say that a
finite sequence $(\uv_1,\dots,\uv_\ell)$ of elements of $V$
is \emph{orthogonal} if the one-dimensional subspaces
$\Kv\uv_1,\dots,\Kv\uv_\ell$ that they span
are orthogonal.  We say that it is \emph{orthonormal} if
moreover $\norm{\uv_i}=1$ for each $i=1,\dots,\ell$.  Thus a basis
$(\uv_1,\dots,\uv_n)$ of $\Kv^n$ over $\Kv$ is orthonormal if and
only if it is a basis of $\cOv^n$ as an $\cOv$-module.
Since $\cOv$ is a principal ideal domain,
any orthonormal sequence in $\Kv^n$
can be extended to an orthonormal basis of $\Kv^n$.

We recall that Hadamard's inequality extends naturally
to the present setting and provides a criterion for
orthogonality.

\begin{lemma}
\label{lemma:Hadamard}
Let $\ux_1,\dots,\ux_m$ be non-zero elements of $\Kv^n$.
Then, we have
\begin{equation}
\label{eq:Hadamard}
 \norm{\ux_1\wedge\cdots\wedge\ux_m}
 \le \norm{\ux_1}\cdots\norm{\ux_m}
\end{equation}
with equality if and only if $(\ux_1,\dots,\ux_m)$
is orthogonal.
\end{lemma}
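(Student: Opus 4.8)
The plan is to reduce everything to a statement about the reduction modulo the maximal ideal $\mathfrak{m}=\{x\in\cOv;\ \abs{x}<1\}$ of $\cOv=F[[1/T]]$, whose residue field is $\cOv/\mathfrak{m}\cong F$. First I would normalize: since the inequality \eqref{eq:Hadamard} and the notion of orthogonality are both unchanged when we scale each $\ux_i$ by a nonzero element of $\Kv$, we may assume $\norm{\ux_i}=1$ for $i=1,\dots,m$, i.e.\ each $\ux_i\in\cOv^n\setminus\mathfrak{m}\cOv^n$. Under this normalization the right-hand side of \eqref{eq:Hadamard} equals $1$, and since $\ux_1\wedge\cdots\wedge\ux_m\in\tbigwedge^m\cOv^n$, the inequality $\norm{\ux_1\wedge\cdots\wedge\ux_m}\le 1$ is immediate from the fact that each Plücker coordinate is a sum of products of coordinates of the $\ux_i$, hence lies in $\cOv$. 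This already proves \eqref{eq:Hadamard}.

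Next I would treat the equality case. With the normalization above, $\norm{\ux_1\wedge\cdots\wedge\ux_m}=1$ means some $m\times m$ minor of the matrix with rows $\ux_1,\dots,\ux_m$ is a unit in $\cOv$, equivalently that the reductions $\bar\ux_1,\dots,\bar\ux_m\in F^n$ of the $\ux_i$ modulo $\mathfrak{m}$ are $F$-linearly independent. Conversely, if that minor lies in $\mathfrak{m}$ then $\norm{\ux_1\wedge\cdots\wedge\ux_m}<1$ forces a strict inequality in \eqref{eq:Hadamard}. So the equality case is equivalent to: $\bar\ux_1,\dots,\bar\ux_m$ are $F$-linearly independent in $F^n$. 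It remains to show this is in turn equivalent to $(\ux_1,\dots,\ux_m)$ being orthogonal.

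For that last equivalence, suppose first the reductions are $F$-linearly independent. Extend $\bar\ux_1,\dots,\bar\ux_m$ to an $F$-basis of $F^n$ and lift it to a basis $(\ux_1,\dots,\ux_m,\uv_{m+1},\dots,\uv_n)$ of $\cOv^n$ as an $\cOv$-module (the lift is a basis because its matrix reduces to an invertible matrix over $F$, hence has unit determinant); such a basis is orthonormal by the characterization recalled just before the lemma, so in particular $(\ux_1,\dots,\ux_m)$ is orthogonal. Conversely, if $(\ux_1,\dots,\ux_m)$ is orthogonal, then for any $\bar c_1,\dots,\bar c_m\in F$ not all zero, choosing lifts $c_i\in\cOv$ with some $\abs{c_i}=1$ gives $\norm{c_1\ux_1+\dots+c_m\ux_m}=\max_i\abs{c_i}\norm{\ux_i}=1$, so the reduction $\bar c_1\bar\ux_1+\dots+\bar c_m\bar\ux_m$ is nonzero in $F^n$; hence the $\bar\ux_i$ are $F$-linearly independent. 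This closes the circle of equivalences and proves the lemma. I do not anticipate a genuine obstacle here; the only point requiring a little care is the clean passage between "an $m\times m$ minor is a unit'', "the reductions are linearly independent over $F$'', and "the $\ux_i$ extend to an $\cOv$-basis of $\cOv^n$'', all of which rest on $\cOv$ being a (local) principal ideal domain with residue field $F$ and on the stated fact that orthonormal bases of $\Kv^n$ are exactly $\cOv$-bases of $\cOv^n$.
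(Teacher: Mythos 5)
The paper states this lemma without proof---it is merely recalled as the natural extension of Hadamard's inequality to the non-archimedean setting---so there is no internal argument to compare against. Your proof is correct. After normalizing to $\norm{\ux_i}=1$, the inequality is indeed immediate because the Pl\"ucker coordinates land in $\cOv$, and the equality case correctly reduces to asking whether some $m\times m$ minor is a unit, equivalently whether the reductions $\bar\ux_1,\dots,\bar\ux_m$ are $F$-linearly independent in $F^n$. Both directions of the bridge to orthogonality are sound: independence of the reductions lets you complete $(\ux_1,\dots,\ux_m)$ to an $\cOv$-basis of $\cOv^n$ (the completed matrix has unit determinant since its reduction is invertible over $F$), hence to an orthonormal basis by the characterization stated just before the lemma; and orthogonality forces $\norm{c_1\ux_1+\cdots+c_m\ux_m}=\max_i\abs{c_i}$ for $c_i\in\cOv$, which rules out any nontrivial $F$-linear relation among the $\bar\ux_i$. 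This is a clean, self-contained argument resting only on $\cOv$ being a local principal ideal domain with residue field $F$ and maximal ideal $\mathfrak{m}=\{x:\abs{x}<1\}$, and I see no gaps.
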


\subsection{The map $\uL_\uu$}
\label{ssec:Lu}
Suppose $n\ge 2$, and let $\uu\in\Kv^n$ with
$\norm{\uu}=1$.  We now adapt the arguments of Schmidt and
Summerer in \cite[\S 2]{SS2013} to show that the
corresponding map $\uL_\uu\colon[0,\infty)\to\bR^n$
defined in the introduction is an $n$-system.

We first choose an orthonormal basis
$(\uu_1,\dots,\uu_n)$ of $\Kv^n$ ending
with $\uu_n=\uu$. Since the dual basis
$(\uu^*_1,\dots,\uu^*_n)$ is orthonormal, we obtain
an orthogonal sum decomposition
\[
 \Kv^n= U \perptop W
 \quad\text{where}\quad
 U=\langle\uu^*_1,\dots,\uu^*_{n-1}\rangle_{\Kv}
 \et
 W=\langle\uu^*_n\rangle_{\Kv}.
\]
Let $\proj_W$ denote the projection onto $W$.  For
each integer $q\ge 0$, we define
\begin{align*}
 \cC_\uu({\mathrm{e}}^q)
  &=\cOv\uu^*_1\oplus\cdots\oplus\cOv\uu^*_{n-1}
      \oplus\cOv T^{-q}\uu^*_n \\
  &=\{\ux\in\Kv^n\,;\,
      \norm{\ux}\le 1\ \text{and}\
      \norm{\proj_W(\ux)}\le {\mathrm{e}}^{-q}\} \\
  &=\{\ux\in\Kv^n\,;\,
      \norm{\ux}\le 1\ \text{and}\
      \abs{\uu\cdot\ux}\le {\mathrm{e}}^{-q}\}.
\end{align*}
The first equality shows that this is a convex body
of $\Kv^n$ of volume ${\mathrm{e}}^{-q}$.  The last one implies
that, for each $j=1,\dots,n$, its $j$-th minimum is
$\exp(L_{\uu,j}(q))$ where $L_{\uu,j}(q)$ is
defined in the introduction.

Now, fix an integer $m$ with $1\le m< n$.  Put
$N=\binom{n}{m}$ and $M=\binom{n-1}{m-1}$.  We denote
by $\omega_1,\dots,\omega_{N-M}$ the products
$\uu^*_{i_1}\wedge\cdots\wedge\uu^*_{i_m}$
with $1\le i_1<\cdots<i_m<n$ in some order
and by $\omega_{N-M+1},\dots,\omega_N$ those
with $1\le i_1<\cdots<i_m=n$.  Since
$(\omega_1,\dots,\omega_N)$ is an orthonormal
basis of $\bigwedge^m\Kv^n$, we deduce that
\begin{align*}
 \tbigwedge^m\cC_{\uu}({\mathrm{e}}^q)
  &=\left( \cOv\omega_1\oplus\cdots\oplus\cOv\omega_{N-M} \right)
    \oplus
    \left( \cOv T^{-q}\omega_{N-M+1}
         \oplus\cdots\oplus\cOv T^{-q}\omega_N \right)\\
  &=\big\{\omega\in\tbigwedge^m\Kv^n \,;\,
      \norm{\omega}\le 1\ \text{and}\
      \norm{\proj_{W^{(m)}}(\omega)}\le {\mathrm{e}}^{-q}\big\},
\end{align*}
where the projection is taken with respect to the
decomposition
\[
 \tbigwedge^m\Kv^n = U^{(m)}\perptop W^{(m)}
 \quad\text{with}\quad
 U^{(m)}=\tbigwedge^m U
 \et
 W^{(m)}=\big(\tbigwedge^{m-1} U\big) \wedge W.
\]
In particular, $\tbigwedge^m\cC_{\uu}({\mathrm{e}}^q)$ has volume ${\mathrm{e}}^{-Mq}$.
For each $j=1,\dots,N$ and each $q\ge 0$, we define
$L^{(m)}_{\uu,j}(q)$ to be the minimum of all
$t\ge 0$ for which the inequalities
\begin{equation}
 \label{Lu:eq:Lm1}
 \norm{\omega}\le {\mathrm{e}}^t
 \et
 \norm{\proj_{W^{(m)}}(\omega)}\le {\mathrm{e}}^{t-q}
\end{equation}
admit at least $j$ linearly independent solutions
$\ux$ in $\bigwedge^mA^n$.  When $q\in\bN$, this is
the logarithm of the $j$-th minimum of $\bigwedge^m\cC_{\uu}({\mathrm{e}}^q)$.
In general, the minimum exists because we may restrict
to values of $t$ in $\bZ\cup(q+\bZ)$.
In the case where $m=1$, we have $N=n$ and
$L_{\uu,j}^{(1)}=L_{\uu,j}$ for $j=1,\dots,n$.

Note that, for fixed $q\ge 0$, the points
$\omega_1,\dots,\omega_N$ satisfy \eqref{Lu:eq:Lm1}
for the choice of $t=q$, thus
\begin{equation}
 \label{Lu:eq:monotone}
 0 \le L^{(m)}_{\uu,1}(q)\le \cdots
   \le L^{(m)}_{\uu,N}(q)\le q
 \quad
 (q\ge 0).
\end{equation}
We also note that, for each $j=1,\dots,N$, we have
\[
 L_{\uu,j}^{(m)}(q_1)
   \le L_{\uu,j}^{(m)}(q_2)
   \le (q_2-q_1)+L_{\uu,j}^{(m)}(q_1)
 \quad
 \text{when}
 \quad
 0\le q_1\le q_2.
\]
Thus, $L_{\uu,1}^{(m)},\dots,L_{\uu,N}^{(m)}$ are
continuous functions on $[0,\infty)$.  We make
additional observations.

\begin{lemma}
 \label{lemma:Lm}
For each $a>0$, the union of the graphs of
$L^{(m)}_{\uu,1},\dots,L^{(m)}_{\uu,N}$ over $[0,a]$ is
contained in the union of the graphs of finitely
many functions
\[
 \begin{array}{rl}
  L_\omega\colon [0,\infty)&\longrightarrow \bR\\
  q&\longmapsto
  L_\omega(q)= \max\{\,\log\norm{\omega},\,
     q+\log\norm{\proj_{W^{(m)}}(\omega)}\,\}
 \end{array}
\]
associated to non-zero points $\omega$ in $\bigwedge^mA^n$.
\end{lemma}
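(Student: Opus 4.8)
The plan is to show that, over any bounded interval $[0,a]$, each function $L^{(m)}_{\uu,j}$ agrees, at every point, with one of the functions $L_\omega$ for $\omega$ ranging over a fixed finite subset of $\bigwedge^m A^n$. First I would observe that, by definition, for each fixed $q\ge 0$ the value $L^{(m)}_{\uu,j}(q)$ is the smallest $t$ for which the solutions $\omega\in\bigwedge^m A^n$ of \eqref{Lu:eq:Lm1} span a subspace of dimension at least $j$; equivalently, it is the $j$-th smallest value among the numbers $L_\omega(q)$ as $\omega$ runs over a basis of $\bigwedge^m A^n$ realizing these minima. The key point is that $L_\omega(q)=\max\{\log\norm{\omega},\,q+\log\norm{\proj_{W^{(m)}}(\omega)}\}$, so $L^{(m)}_{\uu,j}(q)$ is always the value $L_\omega(q)$ for some nonzero $\omega\in\bigwedge^m A^n$. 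Hence it suffices to bound the size of the relevant $\omega$'s uniformly for $q\in[0,a]$.

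For that, I would use the monotonicity and Lipschitz estimates already recorded before the lemma: from \eqref{Lu:eq:monotone} we have $L^{(m)}_{\uu,j}(q)\le q\le a$ for all $q\in[0,a]$, and more crudely $L_\omega(q)\le a$ forces both $\log\norm{\omega}\le a$ and $q+\log\norm{\proj_{W^{(m)}}(\omega)}\le a$. So any $\omega\in\bigwedge^m A^n$ that can realize some value $L^{(m)}_{\uu,j}(q)$ with $q\in[0,a]$ must satisfy $\norm{\omega}\le {\mathrm{e}}^{a}$. But $\bigwedge^m A^n=A^N$ with $A=F[T]$, and the condition $\norm{\omega}\le {\mathrm{e}}^a$ means every coordinate of $\omega$ is a polynomial of degree at most $\lfloor a\rfloor$. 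If the base field $F$ were finite this would already be a finite set; in general it is not, so I would instead argue that we only need finitely many $\omega$ \emph{up to the value of $L_\omega$ on $[0,a]$}. Concretely: the function $L_\omega$ on $[0,a]$ depends only on the pair $(\log\norm{\omega},\log\norm{\proj_{W^{(m)}}(\omega)})$, and the first coordinate lies in $\{-\infty,0,1,\dots,\lfloor a\rfloor\}$ (a finite set). For the second coordinate, note $\log\norm{\proj_{W^{(m)}}(\omega)}\in\{-\infty\}\cup\bZ$ and is $\le a$; I would show it is also bounded below on the relevant range, because if $L_\omega(q)=L^{(m)}_{\uu,j}(q)$ for some $q\le a$ then $q+\log\norm{\proj_{W^{(m)}}(\omega)}=L_\omega(q)\ge 0$, so $\log\norm{\proj_{W^{(m)}}(\omega)}\ge -a$. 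Thus both logarithmic quantities range over finite sets, so there are only finitely many distinct functions $L_\omega$ arising, and one may pick one representative $\omega$ for each.

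The main obstacle is the infinitude of $F$: one cannot simply count lattice points of bounded norm. The fix, as sketched, is to pass from the points $\omega$ themselves to the finitely many possible values of the pair of logarithmic heights $(\log\norm{\omega},\,\log\norm{\proj_{W^{(m)}}(\omega)})$ that actually occur, since $L_\omega|_{[0,a]}$ is determined by that pair. I would therefore phrase the conclusion as: there is a finite set $\Omega\subset\bigwedge^m A^n\setminus\{0\}$ such that $\{L_\omega|_{[0,a]} : \omega\in\bigwedge^m A^n\setminus\{0\}\text{ realizes some }L^{(m)}_{\uu,j}(q),\ q\in[0,a]\}=\{L_\omega|_{[0,a]} : \omega\in\Omega\}$, and since every point of every graph of $L^{(m)}_{\uu,j}$ over $[0,a]$ lies on the graph of such a realizing $L_\omega$, the union of the graphs of $L^{(m)}_{\uu,1},\dots,L^{(m)}_{\uu,N}$ over $[0,a]$ is contained in $\bigcup_{\omega\in\Omega}\mathrm{graph}(L_\omega)$, as required.
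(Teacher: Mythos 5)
Your approach is essentially the one in the paper: over $[0,a]$ the union of graphs lies in $[0,a]\times[0,a]$ and, by definition of the minima, each of its points lies on the trajectory $L_\omega$ of some $\omega\in\bigwedge^m A^n\setminus\{0\}$; since $L_\omega$ is determined by the pair $(\log\norm{\omega},\log\norm{\proj_{W^{(m)}}(\omega)})\in\bN\times(\bZ\cup\{-\infty\})$, only finitely many such trajectories are needed. Your remark that for infinite $F$ one cannot simply count the $\omega$'s themselves, and must instead pass to the finitely many distinct restrictions $L_\omega|_{[0,a]}$ choosing a representative $\omega$ for each, is exactly the point; the paper states the same conclusion a bit more tersely.

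One slip, however: the claimed lower bound $\log\norm{\proj_{W^{(m)}}(\omega)}\ge -a$ does not follow from $L_\omega(q)=L^{(m)}_{\uu,j}(q)\ge 0$. You write ``$q+\log\norm{\proj_{W^{(m)}}(\omega)}=L_\omega(q)$'', but $L_\omega(q)$ is a \emph{maximum} of two terms and may well be attained by the first argument $\log\norm{\omega}$; indeed a realizing $\omega$ can lie in $U^{(m)}$, making $\log\norm{\proj_{W^{(m)}}(\omega)}=-\infty$. So the second height is \emph{not} bounded below, and ``both logarithmic quantities range over finite sets'' is false as stated. The finiteness you want still holds, because once $\log\norm{\omega}\le a$ is fixed, any two values of $\log\norm{\proj_{W^{(m)}}(\omega)}$ that are $\le\log\norm{\omega}-a$ (including $-\infty$) give the same restriction of $L_\omega$ to $[0,a]$, namely the constant $\log\norm{\omega}$. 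Replacing the erroneous lower bound by this observation closes the gap and makes your argument match the paper's.
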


For $\omega \in \bigwedge^mA^n\setminus\{0\}$ and $q\ge 0$,
the number $L_\omega(q)$ is the smallest real number
$t\ge 0$ satisfying \eqref{Lu:eq:Lm1}.  In particular,
when $q\in\bN$, it is the smallest integer $t$ such that
$\omega\in T^t\bigwedge^m\cC_\uu({\mathrm{e}}^q)$.  As
this measures the distance from $\omega$ to
$\bigwedge^m\cC_\uu({\mathrm{e}}^q)$ for varying $q$, we say that the graph of
$L_\omega$ is the \emph{trajectory} of $\omega$.
In the case $m=1$, the trajectory of a non-zero point
$\ux$ in $\bigwedge^1 A^n = A^n$ is the graph of the map
\begin{equation}
 \label{eq:trajectoire_x}
 \begin{array}{rl}
  L_\ux\colon [0,\infty)&\longrightarrow \bR\\
  q&\longmapsto
  L_\ux(q)= \max\{\,\log\norm{\ux},\, q+\log\abs{\uu\cdot\ux}\,\}\,.
 \end{array}
\end{equation}

\begin{proof}[Proof of Lemma \ref{lemma:Lm}]
Fix a choice of $a>0$.  By \eqref{Lu:eq:monotone},
the union of the graphs of $L^{(m)}_{\uu,1}, \dots,
L^{(m)}_{\uu,N}$ over $[0,a]$ is contained
in $[0,a]\times [0,a]$.
By construction, it is also contained in the union
of the trajectories of the non-zero points $\omega$
in $\bigwedge^mA^n$.  The conclusion follows because,
for such $\omega$, we have
$\log\norm{\omega}\in \bN$ and
$\log\norm{\proj_{W^{(m)}}(\omega)}\in\bZ\cup\{-\infty\}$.
Thus, there are only finitely many possible trajectories
meeting $[0,a]\times[0,a]$.
\end{proof}

\begin{lemma}
 \label{lemma:Lm:slopes}
For $j=1,\dots,N$, the map $L^{(m)}_{\uu,j}$
is continuous and piecewise linear with
constant slope $0$ or $1$ on each interval
of the form $[a,a+1]$ with $a\in\bN$.
Moreover, for each $q\ge 0$, we have
\begin{itemize}
 \item[(i)]
   $L^{(m)}_{\uu,1}(q)+\cdots+L^{(m)}_{\uu,N}(q)=Mq$,
 \item[(ii)]
   $L^{(m)}_{\uu,1}(q)
  = L_{\uu,1}(q)+\cdots+L_{\uu,m}(q)$,
 \item[(iii)]
   $L^{(m)}_{\uu,2}(q)-L^{(m)}_{\uu,1}(q)
  = L_{\uu,m+1}(q)-L_{\uu,m}(q)$.
\end{itemize}
\end{lemma}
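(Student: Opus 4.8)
The plan is to establish first the structural part of the statement---that each $L^{(m)}_{\uu,j}$ is affine with slope $0$ or $1$ on every interval $[a,a+1]$ with $a\in\bN$---and then to derive the three displayed identities by comparing, on each such interval, two affine functions that already agree at the integer endpoints. Continuity of the $L^{(m)}_{\uu,j}$ has already been noted, so the only genuinely new structural input is the slope constraint.

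For the structural claim, I would fix $a\in\bN$ and work over $[a,a+1]$. By Lemma \ref{lemma:Lm}, the graph of $L^{(m)}_{\uu,j}$ on this interval is contained in the union of the graphs of finitely many trajectories $L_\omega$ with $\omega\in\bigwedge^m A^n\setminus\{0\}$. Each $L_\omega$ is the upper envelope of the constant $\log\norm{\omega}$ and the slope-$1$ affine function $q\mapsto q+\log\norm{\proj_{W^{(m)}}(\omega)}$; since $\log\norm{\omega}\in\bN$ and $\log\norm{\proj_{W^{(m)}}(\omega)}\in\bZ\cup\{-\infty\}$, these two are equal only at an integer value of $q$ (or never, when $\proj_{W^{(m)}}(\omega)=0$), so $L_\omega$ is affine of slope $0$ or $1$ on $[a,a+1]$. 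The same bookkeeping shows that two such trajectories that are not identical on $[a,a+1]$ are nowhere equal on the \emph{open} interval $(a,a+1)$: two parallel branches that meet anywhere coincide, and a slope-$0$ branch meets a slope-$1$ branch only at an integer. Consequently, for each of the finitely many distinct affine functions arising this way, the set of $q\in(a,a+1)$ on which $L^{(m)}_{\uu,j}$ equals it is closed; these sets are pairwise disjoint and cover $(a,a+1)$, so by connectedness exactly one is the whole interval. Hence $L^{(m)}_{\uu,j}$ agrees with a single trajectory on $(a,a+1)$, and by continuity on $[a,a+1]$, where it is therefore affine with slope $0$ or $1$.

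Granting this, both sides of each of (i), (ii), (iii) are affine on every $[a,a+1]$ with $a\in\bN$ --- for (ii) and (iii) one also invokes the slope-$0$-or-$1$ property just proved for the functions $L_{\uu,i}=L^{(1)}_{\uu,i}$ --- so it suffices to check the identities at integers $q\in\bN$, where $\bigwedge^m\cC_\uu({\mathrm{e}}^q)$ is a bona fide convex body of volume ${\mathrm{e}}^{-Mq}$ with $j$-th minimum $\exp\big(L^{(m)}_{\uu,j}(q)\big)$. For (i), Theorem \ref{thm:Mahler:Minkowski} gives $\exp\big(\sum_{j=1}^N L^{(m)}_{\uu,j}(q)\big)\,{\mathrm{e}}^{-Mq}=1$, that is $\sum_j L^{(m)}_{\uu,j}(q)=Mq$. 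For (ii) and (iii), writing $\lambda_i=\exp\big(L_{\uu,i}(q)\big)$ so that $\lambda_1\le\cdots\le\lambda_n$, Theorem \ref{thm:Mahler:compound} says the smallest minimum of $\bigwedge^m\cC_\uu({\mathrm{e}}^q)$ is $\lambda_1\cdots\lambda_m$, giving (ii), and the next one is $\lambda_1\cdots\lambda_{m-1}\lambda_{m+1}$ (as recorded right after that theorem), which combined with (ii) gives (iii). Taking logarithms and interpolating affinely from $\bN$ to $[0,\infty)$ finishes all three.

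The main obstacle is the structural assertion. A priori $L^{(m)}_{\uu,j}$ could switch from one trajectory to another inside an open unit interval, and the proof must exploit precisely that $\log\norm{\omega}\in\bN$ and $\log\norm{\proj_{W^{(m)}}(\omega)}\in\bZ\cup\{-\infty\}$ to pin every breakpoint of an individual trajectory, and every crossing of two distinct trajectories, to integer values of $q$; then continuity plus connectedness of $(a,a+1)$ forces $L^{(m)}_{\uu,j}$ onto a single affine branch there. Once that is in hand, (i)--(iii) are a routine interpolation of Mahler's equalities, which hold on the nose for integer $q$.
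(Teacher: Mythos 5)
Your proof is correct and follows essentially the same route as the paper's: the slope statement is read off from Lemma \ref{lemma:Lm} together with continuity, and (i)--(iii) are obtained at integer $q$ from Theorems \ref{thm:Mahler:Minkowski} and \ref{thm:Mahler:compound} and then extended by affine interpolation. The only difference is that you spell out (via integrality of the breakpoints plus the connectedness argument on $(a,a+1)$) the step the paper leaves tacit, namely why the constant-slope property of the individual trajectories $L_\omega$ transfers to the minima functions $L^{(m)}_{\uu,j}$; this is a worthwhile gap-fill, not a different approach.
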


\begin{proof}
The first assertion is a direct consequence of the
previous lemma because the maps $L_\omega$ with
$\omega\in\bigwedge^mA^n\setminus\{0\}$ are piecewise
linear with constant slope $0$ or $1$ in the
intervals between consecutive integers, and
we already know that the maps $L^{(m)}_{\uu,j}$
are continuous.

When $q$ is an integer, the equality (i) follows
from Theorem \ref{thm:Mahler:Minkowski} applied
to the convex body $\bigwedge^m\cC_{\uu}({\mathrm{e}}^q)$ of
$\bigwedge^m\Kv^n$ while (ii) and (iii) follow
from Theorem \ref{thm:Mahler:compound} together
with the remark stated below that theorem.
The three equalities then extend
to all $q\ge 0$ because all the functions
involved have a constant slope
between consecutive integers.
\end{proof}

\begin{lemma}
\label{lemma:Lm:main}
Suppose that $L^{(m)}_{\uu,1}$ changes slope
from $1$ to $0$ at some point $q>0$, then
$q$ is an integer and we have
$L_{\uu,m}(q)=L_{\uu,m+1}(q)$.
\end{lemma}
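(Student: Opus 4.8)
The plan is to deduce everything from Lemma~\ref{lemma:Lm:slopes}, which already constrains the shape of the functions involved, together with a short argument producing two linearly independent points of $\bigwedge^m A^n$ that both attain the first minimum at the parameter~$q$.

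First, the integrality of~$q$ is immediate: by Lemma~\ref{lemma:Lm:slopes} the map $L^{(m)}_{\uu,1}$ is affine on every interval $[a,a+1]$ with $a\in\bN$, so its slope cannot change at a point of $(a,a+1)$; hence $q\in\bN$, and $q\ge 1$ since $q>0$. Next I would fix $\epsilon\in(0,1)$ so small that $L^{(m)}_{\uu,1}$ has slope~$1$ on $[q-\epsilon,q]$ and slope~$0$ on $[q,q+\epsilon]$, and moreover (invoking Lemma~\ref{lemma:Lm} with $a=q+1$) so small that none of the finitely many trajectories $L_\omega$, $\omega\in\bigwedge^m A^n\setminus\{0\}$, whose graphs together cover the graph of $L^{(m)}_{\uu,1}$ over $[0,q+1]$, breaks inside $(q-\epsilon,q)$ or inside $(q,q+\epsilon)$; this is possible because each $L_\omega$ changes slope only at an integer. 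Each such $L_\omega$ is then affine on $[q-\epsilon,q]$ and on $[q,q+\epsilon]$, so, two distinct affine functions agreeing at most at one point, a pigeonhole argument yields $\omega_1,\omega_2\in\bigwedge^m A^n\setminus\{0\}$ with $L^{(m)}_{\uu,1}=L_{\omega_1}$ on $[q-\epsilon,q]$ and $L^{(m)}_{\uu,1}=L_{\omega_2}$ on $[q,q+\epsilon]$.

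The heart of the argument is then the claim that $\omega_1$ and $\omega_2$ are linearly independent over~$\Kv$. Indeed, each $L_\omega$ is convex, being the pointwise maximum of the constant $\log\norm{\omega}$ and the affine function $q\mapsto q+\log\norm{\proj_{W^{(m)}}(\omega)}$ of slope~$1$; since it has slopes only in $\{0,1\}$ and slope~$1$ on $[q-\epsilon,q]$, the function $L_{\omega_1}$ has slope~$1$ on all of $[q-\epsilon,\infty)$. As $L_{c\omega}=L_\omega+\log\abs{c}$ for every $c\in\Kv^\times$, the trajectory of any $\Kv$-multiple of $\omega_1$ has slope~$1$ on $[q,q+\epsilon]$, whereas $L_{\omega_2}$ has slope~$0$ there; hence $\omega_2$ is not a multiple of $\omega_1$.

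Finally, by continuity $L_{\omega_1}(q)=L_{\omega_2}(q)=L^{(m)}_{\uu,1}(q)$, so, writing $t_0$ for this common value, both $\omega_1$ and $\omega_2$ satisfy~\eqref{Lu:eq:Lm1} with $t=t_0$; being linearly independent, they give $L^{(m)}_{\uu,2}(q)\le t_0=L^{(m)}_{\uu,1}(q)$, while the opposite inequality holds by definition, so $L^{(m)}_{\uu,2}(q)=L^{(m)}_{\uu,1}(q)$. Lemma~\ref{lemma:Lm:slopes}(iii) then gives $L_{\uu,m+1}(q)-L_{\uu,m}(q)=L^{(m)}_{\uu,2}(q)-L^{(m)}_{\uu,1}(q)=0$, as wanted. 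I expect the only real obstacle to be the bookkeeping in the second paragraph — ensuring that a single trajectory matches $L^{(m)}_{\uu,1}$ on a whole one-sided neighborhood of~$q$ rather than on scattered subintervals — which choosing $\epsilon$ to avoid all break points handles; the convexity observation is the conceptual point but is very short once that is set up.
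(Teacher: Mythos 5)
Your argument is correct and follows essentially the same route as the paper: deduce integrality of $q$ from Lemma~\ref{lemma:Lm:slopes}, find trajectories $L_{\omega_1},L_{\omega_2}$ matching $L^{(m)}_{\uu,1}$ on the two sides of $q$, argue linear independence using the fact that a trajectory changes slope at most once (from $0$ to $1$) and that scaling by $c\in\Kv^\times$ only shifts the trajectory by the constant $\log\abs{c}$, then conclude via Lemma~\ref{lemma:Lm:slopes}(iii). The only cosmetic differences are your use of a small $\epsilon$ where the paper just takes the unit intervals $[q-1,q]$ and $[q,q+1]$ directly, and your phrasing of the independence step via convexity of $L_{\omega_1}$ rather than the paper's equivalent observation that $L_\beta$ is constant on $[0,q+1]$ so $L_\beta-L_\alpha$ is non-constant on $[q-1,q]$.
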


\begin{proof}
Put $a=L^{(m)}_{\uu,1}(q)$.  By the preceding lemmas,
the point $q$ is an integer and there exist
$\alpha,\beta\in \bigwedge^mA^n\setminus\{0\}$ such that
\[
 L^{(m)}_{\uu,1}(t)
  = \left\{
    \begin{aligned}
    a+t-q &= L_\alpha(t) &\text{if $q-1\le t\le q$,}\\
    a &= L_\beta(t) &\text{if $q\le t\le q+1$.}
    \end{aligned}
    \right.
\]
Since $L_\beta$ changes slope at most once on $[0,\infty)$,
going from slope $0$ to slope $1$, we deduce that $L_\beta$
is constant equal to $a$ on $[0,q+1]$.  In particular,
$L_\beta-L_\alpha$ is not constant on $[q-1,q]$.  So
$\alpha$ and $\beta$ are linearly independent, and thus
$L^{(m)}_{\uu,2}(q)=a=L^{(m)}_{\uu,1}(q)$.  The conclusion
then follows from Lemma \ref{lemma:Lm:slopes} (iii).
\end{proof}

\begin{theorem}
\label{theorem:nsys}
The map $\uL_\uu=(L_{\uu,1},\dots,L_{\uu,n}) \colon
[0,\infty)\to\bR^n$ is an $n$-system.
\end{theorem}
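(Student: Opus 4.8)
The plan is to verify directly the three defining conditions (S1), (S2), (S3) of an $n$-system for $\uL_\uu=(L_{\uu,1},\dots,L_{\uu,n})$, using the lemmas established above (with $m$ ranging in $\{1,\dots,n-1\}$). Condition (S1) is almost immediate: the inequalities $0\le L_{\uu,1}(q)\le\cdots\le L_{\uu,n}(q)$ for $q\ge 0$ are the case $m=1$ of \eqref{Lu:eq:monotone}, while the identity $L_{\uu,1}(q)+\cdots+L_{\uu,n}(q)=q$ is the case $m=1$ of Lemma \ref{lemma:Lm:slopes}(i), where the constant $M=\binom{n-1}{0}$ equals $1$.

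For (S2), I would invoke Lemma \ref{lemma:Lm:slopes} with $m=1$ to record that each $L_{\uu,j}$ is continuous and, on every interval $[a,a+1]$ with $a\in\bN$, linear of slope $0$ or $1$; hence $\uL_\uu$ is linear on each such interval and on a neighborhood of every non-integer point. Fixing $q\ge 0$ and taking $\epsilon=1$ if $q\in\bN$ and $\epsilon$ small otherwise, $\uL_\uu$ is linear on $[\max\{0,q-\epsilon\},q]$ and on $[q,q+\epsilon]$. On each of these intervals the component slopes lie in $\{0,1\}$ and, by (S1), sum to $1$, so exactly one component is increasing there; letting $\ell$ be its index on the left interval and $k$ its index on the right one produces the $\epsilon$, $k$, $\ell$ demanded by (S2) (at $q=0$ only the right-hand condition is needed).

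The heart of the theorem is (S3). The key point I would use is that the hypothesis $k>\ell$ at a point $q>0$ says precisely that $L_{\uu,\ell}$, the unique increasing component just left of $q$, is not increasing just right of $q$. To extract equalities of the $L_{\uu,i}(q)$ I pass to compound bodies: for any $m$ with $\ell\le m\le k-1$ — so $1\le m\le n-1$ — Lemma \ref{lemma:Lm:slopes}(ii) identifies $L^{(m)}_{\uu,1}$ with the partial sum $L_{\uu,1}+\cdots+L_{\uu,m}$; since the sole increasing index is $\ell\le m$ just left of $q$ and $k>m$ just right of $q$, the function $L^{(m)}_{\uu,1}$ has slope $1$ on the left and slope $0$ on the right, i.e.\ it changes slope from $1$ to $0$ at $q$. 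Lemma \ref{lemma:Lm:main} then yields that $q$ is an integer and $L_{\uu,m}(q)=L_{\uu,m+1}(q)$. Running $m$ over $\ell,\ell+1,\dots,k-1$ and concatenating these equalities gives $L_{\uu,\ell}(q)=\cdots=L_{\uu,k}(q)$, which is exactly (S3).

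The genuinely substantial ingredients — the slope structure of the functions $L^{(m)}_{\uu,j}$ and the ``collision'' statement of Lemma \ref{lemma:Lm:main}, both resting on Mahler's compound body theorem — are already in place, so I do not expect a real obstacle here; the argument above is essentially bookkeeping with slopes and partial sums. The only points needing a modicum of care are checking (S2) uniformly over the choice of $\epsilon$ and the boundary case $q=0$, and making sure that in (S3) the range $\{\ell,\dots,k-1\}$ is non-empty and contained in $\{1,\dots,n-1\}$, which follows at once from $1\le\ell<k\le n$.
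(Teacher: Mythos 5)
Your proof is correct and follows essentially the same route as the paper: you derive (S1) from the $m=1$ case of \eqref{Lu:eq:monotone} and Lemma~\ref{lemma:Lm:slopes}(i), establish (S2) from the piecewise-linear slope structure of the $L_{\uu,j}$, and obtain (S3) by running $m$ over $\ell,\dots,k-1$ and applying Lemma~\ref{lemma:Lm:main} to the compound functions $L^{(m)}_{\uu,1}=L_{\uu,1}+\cdots+L_{\uu,m}$. Your added care about the choice of $\epsilon$ and the boundary case $q=0$ in (S2) only makes the bookkeeping more explicit than the paper's, but there is no substantive difference.
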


\begin{proof}
For the choice of $m=1$, the inequalities
\eqref{Lu:eq:monotone} and the identity of
Lemma \ref{lemma:Lm:slopes} (i) become
\[
 0 \le L_{\uu,1}(q)\le \cdots
   \le L_{\uu,n}(q)\le q
 \et
 L_{\uu,1}(q)+\cdots+L_{\uu,n}(q)=q
 \quad
 (q\ge 0).
\]
Thus $\uL_\uu$ satisfies the condition (S1) in
the definition of an $n$-system.  It also satisfies
(S2) because, by Lemma \ref{lemma:Lm:slopes},
each $L_{\uu,j}=L^{(1)}_{\uu,j}$ has constant slope
$0$ or $1$ in each interval $[q,q+1]$ with $q\in\bN$
while, by the above, their sum has slope $1$ on $[q,q+1]$.
So, for each $q\in\bN$, there is an index
$k\in\{1,\dots,n\}$ for which $L_{\uu,k}$ has slope $1$
on $[q,q+1]$ while the other maps $L_{\uu,j}$ with
$j\neq k$ are constant on that interval.  Now, suppose
that $q\ge 1$ and that $L_{\uu,\ell}$ has slope $1$
on $[q-1,q]$.  Suppose further that $\ell<k$.  Then,
for each integer $m$ with $\ell\le m<k$, the map
$L^{(m)}_{\uu,1}=L_{\uu,1}+\cdots+L_{\uu,m}$ changes
slope from $1$ to $0$ at $q$.  By Lemma \ref{lemma:Lm:main},
this implies that $L_{\uu,\ell}(q)=\cdots=L_{\uu,k}(q)$.
Thus (S3) holds as well.
\end{proof}

\section{The inverse problem}
\label{sec:inv}

Our goal here is to complete the proof of Theorem A by
providing a converse to Theorem \ref{theorem:nsys}.
To this end, we follow the argument of \cite{R2015}
taking advantage of the notable simplifications that
arise in the present non-archimedean setting.

\subsection{The projective distance}
\label{ssec:proj_dist}
We define the \emph{projective distance} between two non-zero points
$\ux$ and $\uy$ in $\Kv^n$ by
\[
 \dist(\ux,\uy):=\frac{\norm{\ux\wedge\uy}}{\norm{\ux}\,\norm{\uy}}.
\]
Lemma \ref{lemma:Hadamard} implies that $\dist(\ux,\uy)\le 1$
with equality if and only if the pair $(\ux,\uy)$ is
orthogonal.  Moreover, the projective distance is
invariant under an isometry of $\Kv^n$.  The next result
relates it to the distance associated with the norm
on $\Kv^n$.

\begin{lemma}
\label{lemma:comparison_distance_norm}
Let $\ux\in \Kv^n\setminus\{0\}$.  Then, there exists
$\uu\in \Kv^n$ with $\norm{\uu}=1$ such that
$\norm{\ux}=\abs{\uu\cdot\ux}$.
For any such $\uu$ and any $\uy\in \Kv^n\setminus\{0\}$ with
$\dist(\ux,\uy)<1$, we have
$\norm{\uy}=\abs{\uu\cdot\uy}$ and
\[
 \dist(\ux,\uy)=\bnorm{(\uu\cdot\ux)^{-1}\ux-(\uu\cdot\uy)^{-1}\uy}.
\]
\end{lemma}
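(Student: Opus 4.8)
The plan is to work in a well-chosen orthonormal basis adapted to $\ux$, reducing everything to an explicit computation with the maximum norm. First I would establish the existence of $\uu$: since $\cOv$ is a principal ideal domain and $\ux \neq 0$, after scaling we may assume $\norm{\ux} = 1$, so $\ux \in \cOv^n$ and $\ux$ is a unimodular vector (its coordinates generate $\cOv$ as an ideal, because some coordinate has absolute value $1$); hence $\ux$ extends to an orthonormal basis $(\uu_1, \dots, \uu_n)$ of $\Kv^n$ with $\uu_1 = \ux$. Taking $\uu = \uu_1^*$ from the dual basis gives $\norm{\uu} = 1$ and $\uu \cdot \ux = 1 = \norm{\ux}$; the general case follows by rescaling $\uu \cdot \ux$ back. (Alternatively, invoke the isometric self-duality stated in the introduction to pick $\uu$ realizing $\norm{\ux} = \max_{\norm{\uy}\le 1}\abs{\ux\cdot\uy}$.)

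Next, I would fix such a $\uu$ and a companion orthonormal basis $(\uu_1, \dots, \uu_n)$ with $\uu_1 = \norm{\ux}^{-1}(\uu\cdot\ux)^{-1}\ux \cdot \norm{\ux}$ — more precisely, arrange the orthonormal basis so that $\ux$ is a scalar multiple of $\uu_1$ and $\uu = \uu_1^*$, so that in coordinates with respect to $(\uu_1^*, \dots, \uu_n^*)$ (equivalently, writing everything against the dual basis) $\uu$ is the first coordinate functional. Write $\uy = \sum_i y_i \uu_i$ in the orthonormal basis $(\uu_1, \dots, \uu_n)$. Then $\uu \cdot \uy = y_1$, $\norm{\uy} = \max_i \abs{y_i}$, $\norm{\ux} = \abs{x_1}$ where $\ux = x_1 \uu_1$, and $\uu \cdot \ux = x_1$. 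Orthogonality of the $\uu_i$ gives $\norm{\ux \wedge \uy} = \abs{x_1} \cdot \norm{\uu_1 \wedge \uy} = \abs{x_1} \cdot \max_{i \ge 2} \abs{y_i}$ by Lemma~\ref{lemma:Hadamard} applied in the orthonormal basis (the wedge $\uu_1 \wedge \uy$ has coordinates $y_2, \dots, y_n$ against the orthonormal basis $\uu_1 \wedge \uu_i$). Hence
\[
 \dist(\ux, \uy) = \frac{\abs{x_1} \max_{i\ge 2}\abs{y_i}}{\abs{x_1}\max_i \abs{y_i}} = \frac{\max_{i\ge 2}\abs{y_i}}{\max_i\abs{y_i}}.
\]

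From the displayed formula, $\dist(\ux, \uy) < 1$ forces $\max_i \abs{y_i} = \abs{y_1}$, i.e.\ $\norm{\uy} = \abs{y_1} = \abs{\uu \cdot \uy}$, which is the second claimed identity. For the last identity, compute in the orthonormal basis: $(\uu\cdot\ux)^{-1}\ux = x_1^{-1} x_1 \uu_1 = \uu_1$, while $(\uu\cdot\uy)^{-1}\uy = y_1^{-1}\sum_i y_i \uu_i = \uu_1 + \sum_{i\ge 2} (y_i/y_1)\uu_i$. Subtracting and taking the norm (again using orthonormality) gives $\max_{i\ge 2}\abs{y_i/y_1} = \max_{i\ge 2}\abs{y_i}/\abs{y_1} = \max_{i\ge 2}\abs{y_i}/\max_i\abs{y_i}$, which matches $\dist(\ux,\uy)$ by the formula above. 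I expect the only real subtlety to be bookkeeping: making sure the orthonormal basis is chosen so that $\uu$ really is the first dual coordinate and that the wedge-product norm is computed correctly via Hadamard's equality for orthogonal tuples; once the basis is set up, each identity is a one-line maximum-norm computation, and the non-archimedean setting avoids any of the approximation error terms that would appear over $\bR$.
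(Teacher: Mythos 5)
Your argument is essentially correct and follows a route close to, but not identical with, the paper's: both pass to a well-chosen orthonormal basis of $\Kv^n$ and reduce to a coordinate computation. The difference is that the paper only normalizes $\uu=\uu_1$ (taking any orthonormal basis with $\uu$ as its first vector) and then exploits the hypothesis $\abs{\uu_1\cdot\ux}=\norm{\ux}$ inside an ultrametric estimate for $\norm{\ux\wedge\uy}$, whereas you normalize \emph{both} $\ux$ (making $(\uu\cdot\ux)^{-1}\ux$ the first basis vector $\uu_1$) and $\uu$ (making $\uu=\uu_1^*$ the first dual vector). Your choice makes every quantity literally a maximum of coordinates, which is cleaner once set up.

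The one thing you flag as ``bookkeeping'' is in fact a genuine step that needs a short justification: you require an orthonormal basis $(\uu_1,\dots,\uu_n)$ with $\uu_1=(\uu\cdot\ux)^{-1}\ux$ \emph{and} $\uu_1^*=\uu$. The paper's statement that any orthonormal sequence extends to an orthonormal basis only gives you the first condition; you also need $\uu\cdot\uu_j=0$ for $j\ge 2$. This is true but not automatic, and should be proved. One way: note $\norm{\uu_1}=1$ and $\uu\cdot\uu_1=1$; extend $\uu_1$ to any orthonormal basis $(\uu_1,\uw_2,\dots,\uw_n)$, then set $\uu_j:=\uw_j-(\uu\cdot\uw_j)\uu_1$ for $j\ge 2$. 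Since $\abs{\uu\cdot\uw_j}\le\norm{\uu}\,\norm{\uw_j}=1$, the coefficients lie in $\cOv$, so $(\uu_1,\uu_2,\dots,\uu_n)$ is obtained from an $\cOv$-basis of $\cOv^n$ by a unimodular triangular change and is therefore again orthonormal; and by construction $\uu\cdot\uu_j=0$ for $j\ge 2$, so $\uu=\uu_1^*$. With this lemma in hand, the rest of your computation (including the identification of $\norm{\ux\wedge\uy}=\abs{x_1}\max_{j\ge 2}\abs{y_j}$ via orthonormality of the $\uu_1\wedge\uu_j$) is correct and yields the statement.
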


\begin{proof}
Let $(\uu_1,\dots,\uu_n)$ be an orthonormal basis of $\Kv^n$
and let $(\ux_1,\dots,\ux_n)$ be the dual basis.
Since the latter is also orthonormal, we find
\[
 \norm{\ux}
   = \norm{(\uu_1\cdot\ux)\ux_1+\cdots+(\uu_n\cdot\ux)\ux_n}
   = \max\{ \abs{\uu_1\cdot\ux},\dots,\abs{\uu_n\cdot\ux} \}.
\]
Thus, there exists an index $i$ such that
$\abs{\uu_i\cdot\ux}=\norm{\ux}$.

Let $\uy\in \Kv^n\setminus\{0\}$.  We also note that
\begin{align*}
 \norm{\ux\wedge\uy}
   &= \max_{1\le j,k\le n}
     \abs{ (\uu_j\cdot\ux)(\uu_k\cdot\uy)
       -(\uu_j\cdot\uy)(\uu_k\cdot\ux) } \\
   &= \max_{1\le j\le n}
     \norm{ (\uu_j\cdot\ux)\uy-(\uu_j\cdot\uy)\ux }.
\end{align*}
If $\abs{\uu_1\cdot\ux}=\norm{\ux}$, we deduce
that, for each $j=1,\dots,n$,
\begin{align*}
 \norm{\ux} &\norm{(\uu_j\cdot\ux)\uy-(\uu_j\cdot\uy)\ux} \\
   &= \bnorm{(\uu_j\cdot\ux)\big((\uu_1\cdot\ux)\uy-(\uu_1\cdot\uy)\ux\big)
      +\big((\uu_j\cdot\ux)(\uu_1\cdot\uy)
            -(\uu_j\cdot\uy)(\uu_1\cdot\ux)\big)\ux}\\
   &\le \norm{\ux} \norm{(\uu_1\cdot\ux)\uy-(\uu_1\cdot\uy)\ux},
\end{align*}
and thus $\norm{\ux\wedge\uy}
= \norm{ (\uu_1\cdot\ux)\uy-(\uu_1\cdot\uy)\ux }$.
If moreover $\abs{\uu_1\cdot\uy}<\norm{\uy}$,
then we have $\norm{(\uu_1\cdot\ux)\uy}
=\norm{\ux}\norm{\uy} >\norm{(\uu_1\cdot\uy)\ux}$
and the previous formula then yields
$\norm{\ux\wedge\uy}=\norm{\ux}\norm{\uy}$, thus
$\dist(\ux,\uy)=1$.  We conclude that,
if $\abs{\uu_1\cdot\ux}=\norm{\ux}$
and $\dist(\ux,\uy)<1$, then
$\abs{\uu_1\cdot\uy}=\norm{\uy}$ and
\[
 \dist(\ux,\uy)
  = \frac{\norm{ (\uu_1\cdot\ux)\uy-(\uu_1\cdot\uy)\ux }}
         {\norm{\ux}\norm{\uy}}
  = \bnorm{ (\uu_1\cdot\ux)^{-1}\ux-(\uu_1\cdot\uy)^{-1}\uy }.
\]
The lemma follows because any element $\uu$
of $\Kv^n$ of norm $1$ can be taken as the first
component of an orthonormal basis of $\Kv^n$.
\end{proof}

This implies in particular that the projective distance
satisfies the ultrametric form of the triangle inequality, namely
\[
 \dist(\ux,\uz)\le \max\{\dist(\ux,\uy),\, \dist(\uy,\uz)\}.
\]
for any non-zero elements $\ux$, $\uy$, $\uz$ of $\Kv^n$.
This is clear if $\dist(\ux,\uy)=1$ or $\dist(\uy,\uz)=1$.
Otherwise, both numbers are $<1$ and the inequality follows
from the lemma applied to the point $\uy$.

\subsection{The key lemma}
\label{ssec:key_lemma}
The following is an adaptation of \cite[Lemma 5.1]{R2015}
which will serve to construct recursively a sequence
of bases of $A^n$ with specific properties.
Note the stronger hypothesis and conclusion.

\begin{lemma}
\label{lemma:construction_bases}
Let $h,k,\ell\in\{1,\dots,n\}$ with $h\le \ell$ and
$k<\ell$, let $(\ux_1,\dots,\ux_n)$ be a basis of $A^n$,
let $\uu\in\Kv^n$, and let $a\in \bZ$ with ${\mathrm{e}}^a > \norm{\ux_h}$
and ${\mathrm{e}}^a \ge \norm{\ux_1},\dots,\norm{\ux_\ell}$.  Suppose
that $(\ux_1, \dots, \widehat{\ux_h}, \dots, \ux_n,\uu)$
is an orthogonal basis of $\Kv^n$.
Then, there exists a basis $(\uy_1,\dots,\uy_n)$ of $A^n$
satisfying
\begin{itemize}
 \item[1)]
   $(\uy_1,\dots,\widehat{\uy_\ell\,},\dots,\uy_n)
     =(\ux_1,\dots,\widehat{\ux_h},\dots,\ux_n)$,
 \item[2)]
   $\uy_\ell \in
    \ux_h
     +\big\langle
      \ux_1,\dots,\widehat{\ux_h},\dots,\ux_\ell
      \big\rangle_A$\,,
 \item[3)]
    $\norm{\uy_\ell} = {\mathrm{e}}^a$,
 \item[4)]
    $(\uy_1, \dots, \widehat{\uy_k}, \dots,\uy_n,\uu)$
    is an orthogonal basis of $\Kv^n$,
 \item[5)]
    $\det(\uy_1, \dots, \widehat{\uy_k}, \dots,\uy_n,\uu)$
    and $\det(\ux_1, \dots, \widehat{\ux_h}, \dots, \ux_n,\uu)$
    have the same leading coefficients as elements of
    $\Kv=F((1/T))$.
\end{itemize}
\end{lemma}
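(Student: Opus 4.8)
The plan is to build $\uy_\ell$ by adding to $\ux_h$ a single scalar multiple over $A$ of the vector that is to become $\uy_k$, and to choose that multiple so that it controls simultaneously the norm of $\uy_\ell$, the orthogonality demanded in 4), and the leading coefficient demanded in 5). The routine parts are the relabeling and the ultrametric norm estimates; the one point requiring genuine care is the sign of the correction term in 5).

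First I fix the relabeling imposed by 1): put $\uy_i=\ux_i$ for $i<h$, $\uy_i=\ux_{i+1}$ for $h\le i\le\ell-1$, and $\uy_i=\ux_i$ for $i>\ell$. Since $h\le\ell$, this gives $(\uy_1,\dots,\widehat{\uy_\ell},\dots,\uy_n)=(\ux_1,\dots,\widehat{\ux_h},\dots,\ux_n)$, and $\uy_k=\ux_{k'}$ where $k'=k$ if $k<h$ and $k'=k+1$ if $h\le k\le\ell-1$. Because $k<\ell$, in both cases $k'\in\{1,\dots,\ell\}\setminus\{h\}$; in particular $\ux_{k'}$ is one of the vectors $\ux_1,\dots,\widehat{\ux_h},\dots,\ux_\ell$, and the hypothesis ${\mathrm{e}}^a\ge\norm{\ux_{k'}}$ forces $\norm{\ux_{k'}}={\mathrm{e}}^c$ with $c\in\bN$, $c\le a$.

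Next I expand $\ux_h$ in the given orthogonal basis, $\ux_h=\sum_{j\neq h}\gamma_j\ux_j+\gamma\uu$. Orthogonality gives $\norm{\ux_h}=\max\big(\max_{j\neq h}\abs{\gamma_j}\norm{\ux_j},\,\abs{\gamma}\norm{\uu}\big)$, so every term on the right is $<{\mathrm{e}}^a$; in particular $\abs{\gamma_{k'}}<{\mathrm{e}}^{a-c}$. I then set $\uy_\ell:=\ux_h+p\,\ux_{k'}$ with $p=\pm T^{a-c}\in A$, the sign to be fixed at the end. Conditions 1) and 2) are immediate, and since the passage from $(\ux_1,\dots,\ux_n)$ to $(\uy_1,\dots,\uy_n)$ is a permutation followed by the transvection $\ux_h\mapsto\ux_h+p\,\ux_{k'}$ (legitimate as $k'\neq h$ and $p\in A$), it lies in $\GL_n(A)$, so $(\uy_1,\dots,\uy_n)$ is again a basis of $A^n$. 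Writing $\delta_{k'}:=\gamma_{k'}+p$, the inequality $\abs{p}={\mathrm{e}}^{a-c}>\abs{\gamma_{k'}}$ gives $\abs{\delta_{k'}}={\mathrm{e}}^{a-c}$ and makes the leading coefficient of $\delta_{k'}$ equal to that of $p$. Finally $\uy_\ell=\sum_{j\neq h,\,k'}\gamma_j\ux_j+\delta_{k'}\ux_{k'}+\gamma\uu$, where the term $\delta_{k'}\ux_{k'}$ has norm $\abs{\delta_{k'}}\norm{\ux_{k'}}={\mathrm{e}}^{a}$ while every other term has norm $<{\mathrm{e}}^a$; orthogonality then yields $\norm{\uy_\ell}={\mathrm{e}}^a$, which is 3).

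Conditions 4) and 5) both follow from one multilinear identity in $\bigwedge^n\Kv^n$. Every factor of $\uy_1\wedge\cdots\wedge\widehat{\uy_k}\wedge\cdots\wedge\uy_n\wedge\uu$ other than $\uy_\ell$ is either $\uu$ or $\ux_j$ with $j\in\{1,\dots,n\}\setminus\{h,k'\}$; substituting $\uy_\ell=\sum_{j\neq h,\,k'}\gamma_j\ux_j+\delta_{k'}\ux_{k'}+\gamma\uu$ and expanding, every term having an $\ux_j$ $(j\neq h,k')$ or $\uu$ in the slot of $\uy_\ell$ has a repeated factor and so vanishes; the surviving term gives
\[
 \uy_1\wedge\cdots\wedge\widehat{\uy_k}\wedge\cdots\wedge\uy_n\wedge\uu
   =\epsilon\,\delta_{k'}\;\ux_1\wedge\cdots\wedge\widehat{\ux_h}\wedge\cdots\wedge\ux_n\wedge\uu
\]
for a sign $\epsilon\in\{\pm1\}$ depending only on $n,h,k,\ell$. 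Taking norms and using Lemma \ref{lemma:Hadamard} on the orthogonal basis on the right, together with $\abs{\delta_{k'}}\norm{\ux_{k'}}=\norm{\uy_\ell}$, gives $\norm{\uy_1\wedge\cdots\wedge\widehat{\uy_k}\wedge\cdots\wedge\uy_n\wedge\uu}=\norm{\uu}\,\norm{\uy_\ell}\prod_{j\neq h,\,k'}\norm{\ux_j}$, so by the equality case of Lemma \ref{lemma:Hadamard} the family $(\uy_1,\dots,\widehat{\uy_k},\dots,\uy_n,\uu)$ is orthogonal, hence an orthogonal basis; this is 4). Reading the displayed identity in the canonical basis of $\Kv^n$ turns it into $\det(\uy_1,\dots,\widehat{\uy_k},\dots,\uy_n,\uu)=\epsilon\,\delta_{k'}\det(\ux_1,\dots,\widehat{\ux_h},\dots,\ux_n,\uu)$. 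I finally choose the sign of $p$ so that the leading coefficient of $p$, hence of $\delta_{k'}$, equals $\epsilon$; then $\epsilon\,\delta_{k'}$ has leading coefficient $1$, so the two determinants have the same leading coefficient, which is 5). This choice is compatible with 3) and 4), which only used $\abs{p}={\mathrm{e}}^{a-c}$. The one delicate ingredient is the permutation bookkeeping ensuring the displayed identity holds with the unit factor $\epsilon\,\delta_{k'}$ and no additional sign; once that is pinned down, the remaining freedom in $p=\pm T^{a-c}$ absorbs $\epsilon$.
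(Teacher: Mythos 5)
Your proof is correct and takes essentially the same route as the paper's: use 1) to define $\uy_i$ for $i\neq\ell$, set $\uy_\ell=\ux_h+p\,\ux_{k'}$ where $\ux_{k'}=\uy_k$ and $\abs{p}\,\norm{\ux_{k'}}={\mathrm{e}}^a>\norm{\ux_h}$, then obtain 3) from the ultrametric estimate and obtain 4), 5) from a wedge/determinant identity together with the equality case of Lemma \ref{lemma:Hadamard}. Where you go beyond the paper is the sign $\epsilon$. The paper fixes $p=T^b$ (your $T^{a-c}$) and writes the determinant identity with no sign, $\det(\uy_1,\dots,\widehat{\uy_k},\dots,\uy_n,\uu)=(T^b+c_k)\det(\ux_1,\dots,\widehat{\ux_h},\dots,\ux_n,\uu)$, but reordering the wedge factors to move $\uy_k$ from position $\ell-1$ back to position $k$ actually contributes $(-1)^{\ell-1-k}$; so, as written, the paper's verification of 5) only goes through when $\ell-k$ is odd, whereas the lemma is invoked later for arbitrary $k<\ell$. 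By allowing $p=\pm T^{a-c}$ and choosing the sign of $p$ so that the leading coefficient of $\delta_{k'}=\gamma_{k'}+p$ equals $\epsilon$, you cancel the reordering sign and get the same leading coefficient in all cases. That extra care is genuinely needed, not cosmetic; the rest of your argument matches the paper's step for step.
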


Although the basis $(\uy_1,\dots,\uy_n)$ is in general
not uniquely determined by the conditions 1) to 5), the
argument that we provide below is deterministic in the
sense that, for the given data, it yields a unique basis
with the requested properties.

\begin{proof}
We use 1) as a definition of the vectors
$\uy_1,\dots,\widehat{\uy_\ell},\dots,\uy_n$.
Then, $(\uy_1,\dots,\uy_n)$ is a basis of $A^n$
for any choice of $\uy_\ell$ satisfying 2).
Since $k<\ell$, the point $\uy_k$ belongs to the set
\[
   \{\uy_1,\dots,\uy_{\ell-1}\}
    = \{\ux_1,\dots,\widehat{\ux_h},\dots,\ux_{\ell}\}
\]
and so $\norm{\uy_k}={\mathrm{e}}^{a-b}$ for some integer $b\ge 0$.
In particular the choice of
\[
 \uy_\ell = \ux_h + T^b\uy_k
\]
fulfils the condition 2).  Since $\norm{\ux_h} < {\mathrm{e}}^a =
\norm{T^b\uy_k}$, we also have $\norm{\uy_\ell}={\mathrm{e}}^a$
as requested by condition 3).  Moreover,
$(\uy_1,\dots,\widehat{\uy_\ell\,},\dots,\uy_n,\uu)$
is an orthogonal basis of $\Kv^n$.  So, we can write
\[
 \ux_h = c_\ell\uu + \sum_{j\neq \ell} c_j \uy_j
\]
with coefficients $c_1,\dots,c_n\in \Kv$ such that
$\norm{c_\ell\uu}\le \norm{\ux_h}$ and $\norm{c_j\uy_j}\le
\norm{\ux_h}$ for any $j\neq \ell$.  In particular, this yields
$\norm{c_k\uy_k} < {\mathrm{e}}^a = \norm{T^b\uy_k}$, so $\abs{c_k} <
\abs{T^b}$, and thus $\abs{T^b+c_k}={\mathrm{e}}^b$.  Since
\begin{equation}
 \label{eq:y_ell}
 \uy_\ell
  \in
 (T^{b}+c_k)\uy_k
 +\langle \uy_1,\dots,\widehat{\uy_k},\dots,
       \widehat{\uy_\ell\,},\dots,\uy_n,\uu\rangle_{\Kv},
\end{equation}
we deduce that
\[
\norm{\uy_1\wedge\cdots\wedge\widehat{\uy_k}
       \wedge\cdots\wedge\uy_n\wedge\uu}
  = {\mathrm{e}}^b\,
     \norm{\uy_1\wedge\cdots\wedge\widehat{\uy_\ell\,}
       \wedge\cdots\wedge\uy_n\wedge\uu}.
\]
As $(\uy_1,\dots,\widehat{\uy_\ell\,},\dots,\uy_n,\uu)$
is an orthogonal basis of $\Kv^n$, Lemma
\ref{lemma:Hadamard} then yields
\[
\norm{\uy_1\wedge\cdots\wedge\widehat{\uy_k}
       \wedge\cdots\wedge\uy_n\wedge\uu}
 = \frac{\norm{\uy_1}\cdots\norm{\uy_n}\,\norm{\uu}}{{\mathrm{e}}^{-b}\norm{\uy_\ell}}
 = \frac{\norm{\uy_1}\cdots\norm{\uy_n}\,\norm{\uu}}{\norm{\uy_k}}
\]
because ${\mathrm{e}}^{-b}\norm{\uy_\ell}={\mathrm{e}}^{a-b}=\norm{\uy_k}$.
By Lemma \ref{lemma:Hadamard}, this in turn implies that
the $n$-tuple
$(\uy_1,\dots,\widehat{\uy_k\,},\dots,\uy_n,\uu)$
is an orthogonal basis of $\Kv^n$.  Thus the condition 4)
is satisfied as well.  Finally, the relation \eqref{eq:y_ell}
yields
\begin{align*}
 \det(\uy_1, \dots, \widehat{\uy_k}, \dots,\uy_n,\uu)
  &= (T^b+c_k)\det(\uy_1, \dots, \widehat{\uy_\ell}, \dots,\uy_n,\uu)\\
  &= (T^b+c_k)\det(\ux_1, \dots, \widehat{\ux_h}, \dots, \ux_n,\uu).
\end{align*}
Since $T^b+c_k$ has leading coefficient $1$ in $F((1/T))$
(because $|c_k|<|T^b|$), this gives 5).
\end{proof}

We will use this lemma in combination with the following
result (cf.\ \cite[Lemma 4.7]{R2015}).

\begin{lemma}
\label{lemma:dist_dual}
Let $1\le k<\ell\le n$ be integers, let $(\uy_1,\dots,\uy_n)$
be a basis of $\Kv^n$, and let $(\uy^*_1,\dots,\uy^*_n)$
denote the dual basis of $\Kv^n$ in the sense that
$\uy^*_i\cdot\uy_j=\delta_{i,j}$ ($1\le i,j\le n$).  Assume that the
$(n-1)$-tuples $(\uy_1,\dots,\widehat{\uy_\ell\,},\dots,\uy_n)$
and $(\uy_1,\dots,\widehat{\uy_k},\dots,\uy_n)$ are
both orthogonal families in $\Kv^n$.  Then, we have
\begin{equation}
 \label{dist:lemVVH:eq2}
 \dist(\uy_k^*,\uy_\ell^*)
  = \frac{\norm{\uy_1\wedge\cdots\wedge\uy_n}}
         {\norm{\uy_1}\cdots\norm{\uy_n}}.
\end{equation}
\end{lemma}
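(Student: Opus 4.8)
The plan is to reduce the statement to a computation in a $2$-dimensional subspace. Both sides of \eqref{dist:lemVVH:eq2} and the two orthogonality hypotheses are unchanged if each $\uy_j$ is replaced by a nonzero scalar multiple — the dual vectors $\uy_j^*$ are then replaced by the corresponding inverse multiples, and the projective distance is invariant under scaling each of its arguments — so I may assume $\norm{\uy_j}=1$ for $j=1,\dots,n$. Then $(\uy_j)_{j\neq k,\ell}$, being a subfamily of the orthogonal family $(\uy_j)_{j\neq\ell}$, is orthonormal, and I extend it to an orthonormal basis $(\uu_1,\dots,\uu_n)$ of $\Kv^n$ with $\uu_j=\uy_j$ for $j\neq k,\ell$. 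Write $\Kv^n=Y\perptop Z$ with $Y=\langle\uy_j : j\neq k,\ell\rangle_\Kv$ and $Z=\langle\uu_k,\uu_\ell\rangle_\Kv$, let $\pi\colon\Kv^n\to Z$ be the projection with kernel $Y$, and set $\up=\pi(\uy_k)$, $\uq=\pi(\uy_\ell)$.

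First I would move everything into the plane $Z$ and its form-orthogonal companion $Z'=\{\uv\in\Kv^n : \uv\cdot y=0\text{ for all }y\in Y\}$, which has dimension $2$ since the form \eqref{intro:eq:pairing} is non-degenerate. From $\uy_k^*\cdot\uy_j=\uy_\ell^*\cdot\uy_j=0$ for all $j\neq k,\ell$ we get $\uy_k^*,\uy_\ell^*\in Z'$, so $(\uy_k^*,\uy_\ell^*)$ is a basis of $Z'$; likewise $\uu_k^*\cdot\uy_j=\uu_k^*\cdot\uu_j=0$ and $\uu_\ell^*\cdot\uy_j=0$ for $j\neq k,\ell$, so the orthonormal pair $(\uu_k^*,\uu_\ell^*)$ is an orthonormal basis of $Z'$. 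Since each $\uy_i^*$ $(i\in\{k,\ell\})$ annihilates the $Y$-component of every vector, $\uy_i^*\cdot\pi(\uy_j)=\uy_i^*\cdot\uy_j=\delta_{ij}$ for $i,j\in\{k,\ell\}$; this forces $(\up,\uq)$ to be a basis of $Z$, dual to $(\uy_k^*,\uy_\ell^*)$ under the form. Finally, the orthogonality of $(\uy_j)_{j\neq\ell}$ says that $\uy_k$ is orthogonal to $Y$; decomposing $\uy_k=\pi(\uy_k)+(\uy_k-\pi(\uy_k))$ and choosing $y=-(\uy_k-\pi(\uy_k))\in Y$ in $\norm{\uy_k+y}=\max\{\norm{\uy_k},\norm y\}$ gives $\norm{\uy_k-\pi(\uy_k)}\le\norm{\pi(\uy_k)}$, hence $\norm{\up}=\norm{\uy_k}=1$; similarly $\norm{\uq}=1$.

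The computation is then planar. The orthonormal bases $(\uu_k,\uu_\ell)$ of $Z$ and $(\uu_k^*,\uu_\ell^*)$ of $Z'$ identify each of $Z$, $Z'$ isometrically with $\Kv^2$ (maximum norm), carry $\bigwedge^2 Z$ isometrically onto $\Kv$, and — because $\uu_i\cdot\uu_j^*=\delta_{ij}$ — transport the form-pairing $Z\times Z'\to\Kv$ into the standard bilinear form on $\Kv^2$. Writing $\up=(p_1,p_2)$, $\uq=(q_1,q_2)$ and $A:=\norm{\up\wedge\uq}=\abs{\det(\up,\uq)}$, the basis dual to $(\up,\uq)$ is $\uy_k^*=\det(\up,\uq)^{-1}(q_2,-q_1)$ and $\uy_\ell^*=\det(\up,\uq)^{-1}(-p_2,p_1)$, so $\norm{\uy_k^*}=\norm{\uq}/A=1/A$, $\norm{\uy_\ell^*}=\norm{\up}/A=1/A$, and $\uy_k^*\wedge\uy_\ell^*=\det(\up,\uq)^{-1}\,\ue_1\wedge\ue_2$, giving $\norm{\uy_k^*\wedge\uy_\ell^*}=1/A$. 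Hence $\dist(\uy_k^*,\uy_\ell^*)=\norm{\uy_k^*\wedge\uy_\ell^*}/(\norm{\uy_k^*}\,\norm{\uy_\ell^*})=A$.

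It remains to identify $A$ with the right-hand side of \eqref{dist:lemVVH:eq2}. Reordering the factors and replacing $\uy_k$ by $\pi(\uy_k)$ and $\uy_\ell$ by $\pi(\uy_\ell)$ — legitimate because the discarded $Y$-components get wedged with $n-1$ vectors of the $(n-2)$-dimensional space $Y$ and so contribute $0$ — yields $\uy_1\wedge\cdots\wedge\uy_n=\pm\big(\bigwedge_{j\neq k,\ell}\uy_j\big)\wedge\up\wedge\uq$, a product of an element of $\bigwedge^{n-2}Y$ with one of $\bigwedge^2 Z$; since $Y\perptop Z$, its norm is the product of the two norms (Lemma \ref{lemma:Hadamard} in a basis adapted to $Y\perptop Z$), and Lemma \ref{lemma:Hadamard} applied to the orthonormal family $(\uy_j)_{j\neq k,\ell}$ gives $\norm{\bigwedge_{j\neq k,\ell}\uy_j}=1$; so $\norm{\uy_1\wedge\cdots\wedge\uy_n}=A$. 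As $\prod_j\norm{\uy_j}=1$, this equals the right-hand side. The step I expect to demand the most care is the passage to the plane $Z$: in contrast with the archimedean case, the orthogonality of $(\uy_j)_{j\neq\ell}$ does not place $\uy_k$ itself in $Z$, only forces its $Z$-component to dominate, so one must argue throughout with the projections $\pi(\uy_k),\pi(\uy_\ell)$ and keep straight which notion of orthogonality — topological, or relative to the bilinear form \eqref{intro:eq:pairing} — is in play at each step.
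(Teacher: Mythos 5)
Your proof is correct, and it takes a genuinely different route from the paper's. The paper first reduces to $k=1$, $\ell=n$, then extends the orthonormal $(n-1)$-tuple $(\uy_1,\dots,\uy_{n-1})$ to an orthonormal basis $(\uy_1,\dots,\uy_{n-1},\uu)$ by adjoining a single vector $\uu$, writes $\uu=\sum c_j\uy_j$, and works directly with the two coefficients $c_1$ and $c_n$: two applications of Hadamard's inequality give $\max\{1,|c_1|\}^{-1}=|c_n|^{-1}=\norm{\uy_1\wedge\cdots\wedge\uy_n}$, while the dual basis to $(\uy_1,\dots,\uy_{n-1},\uu)$ yields $\dist(\uy_1^*,\uy_n^*)=\max\{1,|c_1|\}^{-1}$. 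You instead treat $k$ and $\ell$ symmetrically: you extend the orthonormal $(n-2)$-tuple $(\uy_j)_{j\neq k,\ell}$ to an orthonormal basis by adjoining \emph{two} vectors $\uu_k,\uu_\ell$, decompose $\Kv^n=Y\perptop Z$, and reduce everything to a $2\times 2$ determinant computation in the plane $Z$ paired against its form-orthogonal companion $Z'$. What your route buys is a cleaner conceptual picture — the identity becomes a planar duality formula once one sees that $(\up,\uq)$ and $(\uy_k^*,\uy_\ell^*)$ are dual bases of $Z$ and $Z'$ — at the cost of more setup (the projections $\pi(\uy_k),\pi(\uy_\ell)$, the verification that $\norm{\up}=\norm{\uq}=1$, and the extension of Hadamard's inequality to wedges of elements of $\bigwedge^{n-2}Y$ and $\bigwedge^2 Z$, which is a short additional argument you rightly flag). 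The paper's route is more compact and stays entirely with $1$-vectors and the original basis, but is less symmetric and leans harder on raw coefficient manipulation. One small stylistic point: in the step concluding $\norm{\up}=1$, the cleanest phrasing is to combine the inequality $\norm{\uy_k-\pi(\uy_k)}\le\norm{\pi(\uy_k)}$ (which you derive from the orthogonality of $\Kv\uy_k$ and $Y$) with the orthogonal-sum identity $\norm{\uy_k}=\max\{\norm{\pi(\uy_k)},\norm{\uy_k-\pi(\uy_k)}\}$ coming from $Y\perptop Z$; as written the two facts are both present but the deduction reads slightly elliptically.
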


\begin{proof}
Without loss of generality, we may assume that
$\uy_1,\dots,\uy_n$ all have norm $1$.  Upon permuting
$\uy_1$ and $\uy_k$ if $k>1$, as well as permuting
$\uy_n$ and $\uy_\ell$ if $\ell<n$, we may also
assume that $k=1$ and $\ell=n$, so that $(\uy_2,\dots,\uy_n)$
and $(\uy_1,\dots,\uy_{n-1})$ are orthonormal families.
We then need to show that $\dist(\uy_1^*,\uy_n^*)
= \norm{\uy_1\wedge\cdots\wedge\uy_n}$.

To this end, we first choose $\uu\in\Kv^n$ so that
$(\uy_1,\dots,\uy_{n-1},\uu)$ is an orthonormal basis
of $\Kv^n$.  Write $\uu=\sum_{j=1}^n c_j\uy_j$ where
$c_j=\uu\cdot\uy^*_j\in\Kv$ for $j=1,\dots,n$.  Then,
we have $c_n\neq 0$ and, applying Lemma
\ref{lemma:Hadamard} to that family, we find
\[
 1= \norm{\uy_1\wedge\cdots\wedge\uy_{n-1}\wedge\uu}
  = \abs{c_n}\,\norm{\uy_1\wedge\cdots\wedge\uy_n}.
\]
Applying the same lemma to $(\uy_2,\dots,\uy_n)$, we obtain
as well
\begin{align*}
 1&= \norm{\uy_2\wedge\cdots\wedge\uy_n} \\
  &= \bnorm{\uy_2\wedge\cdots\wedge\uy_{n-1}\wedge c_n^{-1}(\uu-c_1\uy_1)} \\
  &= \abs{c_n}^{-1}\,
    \bnorm{ \uy_2\wedge\cdots\wedge\uy_{n-1}\wedge\uu
         + (-1)^{n-1} c_1 \uy_1\wedge\cdots\wedge\uy_{n-1}} \\
  &= \abs{c_n}^{-1} \max\{1,\abs{c_1}\},
\end{align*}
where the last equality uses the fact that
$\uy_2\wedge\cdots\wedge\uy_{n-1}\wedge\uu$ and
$\uy_1\wedge\cdots\wedge\uy_{n-1}$ are orthogonal unit elements
of $\bigwedge^{n-1}\Kv^n$.  Combining these results,
we conclude that
\begin{equation}
 \label{lemma:dist_dual:eq1}
 \max\{1,\abs{c_1}\}^{-1}
  = \abs{c_n}^{-1}
  = \norm{\uy_1\wedge\cdots\wedge\uy_n}.
\end{equation}

The dual basis to $(\uy_1,\dots,\uy_{n-1},\uu)$
in $\Kv^n$ is
\[
 \left(
 \uy^*_1-\frac{c_1}{c_n}\uy^*_n,
 \dots,
 \uy^*_{n-1}-\frac{c_{n-1}}{c_n}\uy^*_n,
 \frac{1}{c_n}\uy^*_n
 \right).
\]
It is orthonormal because it is dual to an
orthonormal basis of $\Kv^n$. Then
the decompositions
\[
 \uy^*_1
  = \left( \uy^*_1-\frac{c_1}{c_n}\uy^*_n\right)
    + c_1\left(\frac{1}{c_n}\uy^*_n\right)
 \et
 \uy^*_n
  = c_n\left(\frac{1}{c_n}\uy^*_n\right),
\]
yield
\[
 \norm{\uy^*_1}=\max\{1,\abs{c_1}\},\quad
 \norm{\uy^*_n}=\abs{c_n}
 \et
 \norm{\uy^*_1\wedge\uy^*_n}=\abs{c_n},
\]
thus $\dist(\uy^*_1,\uy^*_n)=\max\{1,\abs{c_1}\}^{-1}$ and
\eqref{lemma:dist_dual:eq1} yields the conclusion.
\end{proof}

 \subsection{Construction of a point}
\label{ssec:construction}
The last lemma that we need is the following description
of the class of $n$-systems that are involved in Theorem A
(cf. \cite[\S1]{R2015}).

\begin{lemma}
\label{lemma:description_nsys}
Let $\uP=(P_1,\dots,P_n)\colon [0,\infty)\to \bR^n$ be
an $n$-system such that $\uP(q)\in\bZ^n$ for each
integer $q\ge 0$.  There exist $s\in\{1,2,\dots,\infty\}$,
and sequences of integers $(q_i)_{0\le i<s}$, $(k_i)_{0\le i<s}$
and $(\ell_i)_{0\le i<s}$, starting with $q_0=0$,
$k_0=\ell_0=n$, with the following property.  Put
$q_s=\infty$ if $s<\infty$.  Then, for each index $i$ with
$0\le i<s$, we have
\begin{itemize}
\item[(i)] $q_i<q_{i+1}$
\item[(ii)] if\/ $i>0$, then $1\le k_i<\ell_i\le n$ and
  $P_{k_i}(q_i) < P_{\ell_i}(q_i)$,
\item[(iii)] if\/ $i+1<s$, then $\ell_{i+1}\ge k_i$ and
  $P_{\ell_{i+1}}(q_{i+1}) = q_{i+1}-q_i+P_{k_i}(q_i)$,
\item[(iv)]
  $\uP(q)
     = \Phi_n\big(
       P_1(q_i),\dots,\widehat{P_{k_i}(q_i)},\dots,P_n(q_i),
       q-q_i+P_{k_i}(q_i)
       \big)$ \quad ($q_i\le q <q_{i+1}$),
\end{itemize}
where $\Phi_n\colon\bR^n \to \Delta_n:=\{(x_1,\dots,x_n)\in\bR^n\,;\,
x_1\le\cdots\le x_n\}$ is the map that lists the coordinates
of a point in monotone increasing order.
\end{lemma}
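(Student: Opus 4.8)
The plan is to extract the required data $(q_i, k_i, \ell_i)$ directly from the local structure of the $n$-system $\uP$, recording the "slope indices" $k$ and $\ell$ of condition (S2) at each point where $\uP$ changes its behaviour. Concretely, I would say that $q>0$ is a \emph{switch point} of $\uP$ if the index $\ell$ from (S2) at $q$ (governing the slope just to the left of $q$) differs from the index $k$ at $q$ (governing the slope just to the right), or more precisely if the component that increases on an interval $[q-\delta,q]$ is not the same as the one that increases on $[q,q+\delta]$. By the local linearity built into (S2), between two consecutive switch points the same coordinate $\ue_k$ is added, so $\uP$ moves along a single coordinate direction; since $\uP(q)\in\bZ^n$ at integers and $\uP$ has slope a standard basis vector, these switch points are integers and form a discrete set, so they can be enumerated as $0=q_0<q_1<q_2<\cdots$. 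Setting $s$ to be either $\infty$ or the (finite) number of switch points plus one, and letting $\ell_i$ be the left-slope index at $q_i$ and $k_i$ the right-slope index at $q_i$ (with the convention $k_0=\ell_0=n$, which is forced because near $q=0$ condition (S1) plus $P_j\ge 0$ makes $\uP$ start by increasing its largest coordinate), gives the desired sequences.

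The verification of (i)--(iv) is then a matter of unwinding definitions. Property (i) is immediate since the $q_i$ are strictly increasing. For (ii): at a genuine switch point $q_i$ with $i>0$, (S3) forces that if $k_i>\ell_i$ then $P_{\ell_i}(q_i)=\cdots=P_{k_i}(q_i)$; to get the cleaner statement $k_i<\ell_i$ with $P_{k_i}(q_i)<P_{\ell_i}(q_i)$ I would argue that one may always relabel so that the \emph{pair} of indices involved in the switch is recorded in increasing order, using that the coordinate that was increasing on the left has strictly overtaken (or is about to be overtaken by) a neighbour — here the fact that $\uP(q_i)\in\bZ^n$ and the slopes are integral is what makes the strict inequality hold. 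For (iii): between $q_i$ and $q_{i+1}$ only coordinate $k_i$ moves, increasing with slope $1$, so at $q_{i+1}$ its value is $q_{i+1}-q_i+P_{k_i}(q_i)$, and by the relabelling convention this coordinate is the one indexed $\ell_{i+1}$ at the next switch; the inequality $\ell_{i+1}\ge k_i$ encodes that the coordinate still increasing at $q_{i+1}^-$ is at least as large (in index, hence in the sorted order) as $k_i$. Property (iv) is then just the observation that on $[q_i,q_{i+1})$ the components of $\uP$ are the sorted version of the $n-1$ frozen values $P_j(q_i)$ for $j\ne k_i$ together with the single moving value $q-q_i+P_{k_i}(q_i)$, which is exactly $\Phi_n$ applied to that $n$-tuple; the sorting is automatically respected because (S1) guarantees $\uP(q)\in\Delta_n$ throughout.

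The main obstacle I anticipate is the bookkeeping around the indices in (ii) and (iii): the raw data coming out of (S2) gives a ``right index $k$'' and ``left index $\ell$'' at each point, but these need not satisfy $k<\ell$, and condition (S3) only handles the case $k>\ell$ by collapsing a block of equal coordinates. The delicate point is to show one can consistently choose, at each switch point $q_i$, a pair $(k_i,\ell_i)$ with $k_i<\ell_i$, $P_{k_i}(q_i)<P_{\ell_i}(q_i)$, and such that deleting $P_{k_i}(q_i)$ from the list $P_1(q_i),\dots,P_n(q_i)$ and re-inserting the moving coordinate reproduces $\uP$ on the whole interval $[q_i,q_{i+1})$ via $\Phi_n$ — and simultaneously that the compatibility condition $\ell_{i+1}\ge k_i$ in (iii) holds. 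I would handle this by choosing $k_i$ to be the \emph{smallest} index attaining the minimal value among the moving block and $\ell_i$ the \emph{largest} index attaining the relevant value, exploiting that when coordinates are equal the map $\Phi_n$ does not distinguish between them, so the apparent freedom in (S2)--(S3) collapses after applying $\Phi_n$; once the choices are pinned down this way, the inequalities in (ii) and (iii) become forced consequences of (S1), (S3), and the integrality $\uP(\bN)\subseteq\bN^n$. Everything else is routine induction on $i$.
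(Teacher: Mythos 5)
There is a genuine gap, and it lies exactly where you flag that the "delicate point" is: your notion of switch point is too broad, and the relabelling you propose cannot fix it. You declare $q>0$ a switch point whenever the left-slope index $\ell$ and right-slope index $k$ from (S2) differ, and then set $\ell_i$, $k_i$ to these raw indices at $q_i$. But (S2)--(S3) allow switches with $k>\ell$, in which case (S3) forces $P_\ell(q)=\cdots=P_k(q)$. Geometrically, this is the moving coordinate of the combined graph passing \emph{through} one or more equal values: the raw index of the moving coordinate jumps, but the graph shows no break. At such a point, no relabelling inside the block $\{\ell,\dots,k\}$ can produce a pair $(k_i,\ell_i)$ with $P_{k_i}(q_i)<P_{\ell_i}(q_i)$, because every candidate value in the block is equal; integrality of $\uP$ on $\bN$ does nothing to separate them. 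Concretely for $n=2$: take $\uP(q)=(0,q)$ on $[0,2]$, $(q-2,2)$ on $[2,4]$, and $(2,q-2)$ on $[4,\infty)$. At $q=4$ one has $\ell=1$, $k=2$ and $P_1(4)=P_2(4)=2$, so your enumeration would create a $q_i=4$ at which (ii) is unsalvageable.

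The resolution is to \emph{not} let such points be $q_i$'s. Since (iv) continues to hold straight through any $k>\ell$ switch (the moving value $q-q_i+P_{k_i}(q_i)$ simply slides past equal fixed values, and $\Phi_n$ re-sorts automatically), the correct definition is to take $[q_i,q_{i+1})$ to be the \emph{maximal} intervals on which (iv) holds with a single $k_i$. Equivalently, keep only the "visible" breaks, which are precisely those $q>0$ where the right-slope index $k$ is \emph{strictly less} than the left-slope index $\ell$; there one always has $P_k(q)<P_\ell(q)$ automatically (the frozen lower coordinate $P_k$ was below the increasing coordinate $P_\ell$ just before $q$), which is what gives (ii). This is what the paper does: it first shows, using (S1)--(S2) and the integrality hypothesis, that on each $[a,a+1]$ with $a\in\bN$ one has $\uP(q)=\uP(a)+(q-a)\ue_k$ for a single $k$, then partitions $[0,\infty)$ into maximal intervals where (iv) holds; maximality, combined with (S3), yields (ii), and continuity yields (iii). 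Your steps establishing that switch points are integers and that (iv) holds on each slope-constancy interval are sound, but without the restriction to visible breaks, property (ii) fails, and the argument does not close.
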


The properties (iii) and (iv) mean that the
union of the graphs of $P_1,\dots,P_n$ over the interval
$[q_i,q_{i+1})$ (called the \emph{combined graph} of $\uP$ over
that interval), consists of horizontal line segments with
ordinates $P_1(q_i),\dots,\widehat{P_{k_i}(q_i)},\dots,P_n(q_i)$
(not necessarily distinct), and a line segment of slope $1$
starting on the point $(q_i,P_{k_i}(q_i))$ and, if $i+1<s$,
ending on the point $(q_{i+1},P_{\ell_{i+1}}(q_{i+1}))$ or else
going to infinity.

\begin{proof}[Proof of Lemma \ref{lemma:description_nsys}]
By hypothesis, the function $\uP$ satisfies the conditions
(S1) to (S3) stated in the introduction.  Let $a\in\bN$.
By (S1) the sum of the coordinates of $\uP(a)\in\bN^n$ is
$a$ and the sum of those of $\uP(a+1)\in\bN^n$ is $a+1$.
Since, by (S2), each component of $\uP$ is monotone
increasing on $[0,\infty)$, we must have $\uP(a+1)=
\uP(a)+\ue_k$ for some $k\in\{1,\dots,n\}$.  By (S1) again,
this implies that $P_{k+1}(a)\ge P_k(a)+1$ and that
\[
 \uP(q)=\uP(a)+(q-a)\ue_k \quad (q\in[a,a+1]).
\]
Therefore, the half line $[0,\infty)$ can be partitioned
in maximal intervals $[q_i,q_{i+1})$ ($0\le i <s$) on
which (iv) holds for some $k_i\in\{1,\dots,n\}$.
The existence of an integer $\ell_{i+1}\in\{1,\dots,n\}$
satisfying (iii) then follows by the continuity of the
map $\uP$.  Finally, the condition in (ii) expresses
the maximality of those intervals thanks to (S3).
\end{proof}

We can now state and prove the following converse to
Theorem \ref{theorem:nsys}.

\begin{theorem}
\label{thm:construction}
Let $\uP$ be as in the previous lemma.  Then there exists
 a point $\uu\in\Kv^n$ of norm $1$ such that $\uP=\uL_\uu$.
\end{theorem}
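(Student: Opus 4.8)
The plan is to construct, recursively using Lemma~\ref{lemma:construction_bases}, a sequence of bases $(\ux_1^{(i)},\dots,\ux_n^{(i)})$ of $A^n$ together with a sequence of unit vectors $\uu^{(i)}\in\Kv^n$ that "track" the combinatorial data $(q_i,k_i,\ell_i)$ furnished by Lemma~\ref{lemma:description_nsys}, and then to take a limit to produce the desired $\uu$. Concretely, for each $i\ge 0$ the basis should satisfy $\norm{\ux_j^{(i)}}={\mathrm{e}}^{P_j(q_i)}$ for all $j$, and the $(n-1)$-tuple obtained by deleting $\ux_{k_i}^{(i)}$ should be orthogonal and complete to an orthogonal basis of $\Kv^n$ when appended with $\uu^{(i)}$. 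To pass from step $i$ to step $i+1$ one applies Lemma~\ref{lemma:construction_bases} with $(h,k,\ell)=(k_i,k_{i+1},\ell_{i+1})$: condition (iii) of Lemma~\ref{lemma:description_nsys} gives $\ell_{i+1}\ge k_i$, which is the hypothesis $h\le\ell$, and the integer $a$ is taken to be $P_{\ell_{i+1}}(q_{i+1})$, which by (iii) equals $q_{i+1}-q_i+P_{k_i}(q_i)$, matching the growth of the slope-$1$ segment. The output basis $(\uy_1,\dots,\uy_n)$ becomes $(\ux_1^{(i+1)},\dots,\ux_n^{(i+1)})$ after renaming, and the key properties 3) and 4) of the lemma guarantee that the norm and orthogonality bookkeeping is preserved.

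Next I would define $\uu$ as a limit of the $\uu^{(i)}$. Property 5) of Lemma~\ref{lemma:construction_bases} ensures that the leading coefficient of $\det(\ux_1^{(i)},\dots,\widehat{\ux_{k_i}^{(i)}},\dots,\ux_n^{(i)},\uu^{(i)})$ is independent of $i$; more importantly, one shows that the projective distances $\dist(\uu^{(i)},\uu^{(i+1)})$ tend to $0$ geometrically. This is where Lemma~\ref{lemma:dist_dual} enters: applying it to the basis produced at step $i+1$ with the two deletion indices $k=k_{i+1}$ and $\ell=\ell_{i+1}$ relates a relevant projective distance to $\norm{\ux_1^{(i+1)}\wedge\cdots\wedge\ux_n^{(i+1)}}/\prod_j\norm{\ux_j^{(i+1)}}$, and since $\prod_j\norm{\ux_j^{(i+1)}}={\mathrm{e}}^{P_1(q_{i+1})+\cdots+P_n(q_{i+1})}={\mathrm{e}}^{q_{i+1}}\to\infty$ while the wedge product stays in $A^N$ (so has norm bounded below away from $0$ as long as it is nonzero), this ratio tends to $0$ rapidly as $q_{i+1}\to\infty$. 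By the ultrametric triangle inequality for $\dist$ established after Lemma~\ref{lemma:comparison_distance_norm}, the sequence $\uu^{(i)}$ is Cauchy for the projective distance, hence (after normalizing representatives via Lemma~\ref{lemma:comparison_distance_norm}) converges to a unit vector $\uu\in\Kv^n$. If the sequence of intervals is finite ($s<\infty$), the last basis already works and no limit is needed.

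It then remains to verify that $\uL_\uu=\uP$. For a fixed $q$ lying in $[q_i,q_{i+1})$, the convex body $\cC_\uu({\mathrm{e}}^q)$ has its successive minima read off from the basis $(\ux_1^{(i)},\dots,\ux_n^{(i)})$: each $\ux_j^{(i)}$ satisfies $\norm{\ux_j^{(i)}}={\mathrm{e}}^{P_j(q_i)}$ and, because $\uu$ is projectively close to $\uu^{(i)}$ (closer than any scale relevant below $q_{i+1}$) and the deleted-index tuple is $\uu^{(i)}$-orthogonal, one gets $\abs{\uu\cdot\ux_j^{(i)}}$ small enough that $\ux_j^{(i)}\in T^{P_j(q_i)}\cC_\uu({\mathrm{e}}^{q_i})$, while the one slope-$1$ direction $\ux_{k_i}^{(i)}$ grows exactly as $q-q_i+P_{k_i}(q_i)$ dictates. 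Combining this with the volume identity $\vol(\cC_\uu({\mathrm{e}}^q))={\mathrm{e}}^{-q}$ and Theorem~\ref{thm:Mahler:Minkowski}, which forces the product of the minima to be ${\mathrm{e}}^q$, pins down $\uL_\uu(q)=\Phi_n(P_1(q_i),\dots,\widehat{P_{k_i}(q_i)},\dots,P_n(q_i),q-q_i+P_{k_i}(q_i))$, which is exactly $\uP(q)$ by part (iv) of Lemma~\ref{lemma:description_nsys}. The main obstacle I anticipate is the uniform control of the projective error: one must choose the representatives $\uu^{(i)}$ and estimate $\dist(\uu^{(i)},\uu)$ precisely enough that, for every $q<q_{i+1}$, the perturbation from $\uu^{(i)}$ to $\uu$ does not disturb which lattice points lie in ${\mathrm{e}}^t\cC_\uu({\mathrm{e}}^q)$ at the critical scales $t$; this is exactly the delicate quantitative bookkeeping that the hypotheses 3), 4), 5) of Lemma~\ref{lemma:construction_bases} were designed to make tractable, and it should go through more cleanly here than over $\bR$ because the value group is discrete and isometries form an open set.
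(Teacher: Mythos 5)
Your proposal is essentially correct and follows the same route as the paper: recursive construction of bases $(\ux_1^{(i)},\dots,\ux_n^{(i)})$ via Lemma~\ref{lemma:construction_bases} guided by the combinatorial data of Lemma~\ref{lemma:description_nsys}, estimation of the projective gaps via Lemma~\ref{lemma:dist_dual} (using that the wedge of a basis of $A^n$ has norm exactly $1$, so $\dist(\uu^{(i)},\uu^{(i+1)})={\mathrm{e}}^{-q_{i+1}}$), conversion to norm distance via Lemma~\ref{lemma:comparison_distance_norm} to extract a limit $\uu$, and finally the upper-bound-plus-sum-constraint argument to conclude $\uL_\uu=\uP$. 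The one place you are slightly looser than the paper is in saying the wedge norm is merely ``bounded below away from $0$'' rather than equal to $1$, and in the final membership claim $\ux_j^{(i)}\in T^{P_j(q_i)}\cC_\uu({\mathrm{e}}^{q_i})$ where the relevant body is $\cC_\uu({\mathrm{e}}^q)$ for $q$ up to but below $q_{i+1}$; both are easily repaired and do not affect the soundness of the argument.
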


\begin{proof}
Using the notation of the previous lemma, we first
construct recursively, for each integer $i$ with
$0\le i< s$, a basis $(\ux_1^{(i)},\dots,\ux^{(i)}_n)$ of $A^n$
with the following properties:
\begin{itemize}
 \item[(B1)]
  $(\ux_1^{(i)},\dots,\widehat{\ux^{(i)}_{k_i}},\dots,\ux^{(i)}_n,\ue_n)$
  is an orthogonal basis of $\Kv^n$,
 \item[(B2)]
  $\log \bnorm{\ux^{(i)}_j} = P_j(q_i)$ for $j=1,\dots,n$,
 \item[(B3)]
  $(\ux_1^{(i)},\dots,\widehat{\ux_{\ell_{i}}^{(i)}},\dots,\ux^{(i)}_n)
   = (\ux_1^{(i-1)},\dots,\widehat{\ux_{k_{i-1}}^{(i-1)}},\dots,\ux^{(i-1)}_n)$
   \ \text{if \ $i\ge 1$.}
\end{itemize}
For $i=0$, we choose $(\ux_1^{(0)},\dots,\ux^{(0)}_n)=(\ue_1,\dots,\ue_n)$.
Then the conditions are fulfilled because $k_0=n$, $q_0=0$ and $P_j(0)=0$
for $j=1,\dots,n$.  Suppose now that $i\ge 1$ and that appropriate bases
have been constructed for all smaller values of the index.  By
Lemma \ref{lemma:description_nsys}, we have
\begin{equation}
\label{thm:construction:eq1}
 \left(
   P_1(q_i),\dots,\widehat{P_{\ell_{i}}(q_i)},\dots,P_n(q_i)
 \right)
   =
 \left(
   P_1(q_{i-1}),\dots,\widehat{P_{k_{i-1}}(q_{i-1})},\dots,P_n(q_{i-1})
 \right)
\end{equation}
and $P_{\ell_i}(q_i) \ge P_{\ell_i}(q_{i-1})
   = \max\{P_1(q_{i-1}), \dots,P_{\ell_i}(q_{i-1})\}$
as well as $P_{\ell_i}(q_i) > P_{k_{i-1}}(q_{i-1})$.
In view of the induction hypothesis, this yields
\[
 P_{\ell_i}(q_i)
   \ge
   \max\big\{\log\bnorm{\ux_1^{(i-1)}},\dots,\log\bnorm{\ux_{\ell_i}^{(i-1)}}\big\}
 \et
 P_{\ell_i}(q_i)
   >
   \log\bnorm{\ux_{k_{i-1}}^{(i-1)}}
\]
Since $k_{i-1}\le\ell_i$ and $k_i<\ell_i$,
Lemma \ref{lemma:construction_bases} then produces a basis
$(\ux_1^{(i)},\dots,\ux^{(i)}_n)$ of $A^n$ satisfying
(B1), (B3) and
\[
 \log \bnorm{\ux_{\ell_i}^{(i)}}=P_{\ell_i}(q_i).
\]
Thus it also satisfies (B2) because
of \eqref{thm:construction:eq1}
combined with (B3) and the induction hypothesis
that $\log \norm{\ux_j^{(i-1)}}=P_j(q_{i-1})$ for $j=1,\dots,n$.

For each index $i$ with $0\le i<s$, let $\uu_i$ denote
an element of $\Kv^n$ of norm $1$ with
$\uu_i\cdot\ux_j^{(i)}=0$ for each $j=1,\dots,n$
with $j\neq k_i$.  By Lemma \ref{lemma:dist_dual} and (B3),
we have
\[
 \dist(\uu_i,\uu_{i-1})
 = \frac{\bnorm{\ux_1^{(i)}\wedge\cdots\wedge\ux_n^{(i)}}}
      {\bnorm{\ux_1^{(i)}}\cdots\bnorm{\ux_n^{(i)}}}
 \quad \text{if $i\ge 1$.}
\]
Since $(\ux_1^{(i)},\dots,\ux^{(i)}_n)$ is a basis of
$A^n$, its determinant belongs to $A^\times\subset\cOv^\times$
and so we obtain that
$\bnorm{\ux_1^{(i)}\wedge\cdots\wedge\ux_n^{(i)}}=1$.
Then, using (B2), we conclude that
\begin{equation}
 \label{thm:construction:eq2}
 \dist(\uu_i,\uu_{i-1})
 = \exp(-P_1(q_i)-\cdots-P_n(q_i))=\exp(-q_i)
 \quad (1\le i<s).
\end{equation}
Since $k_0=n$ and $(\ux_1^{(0)},\dots,\ux^{(0)}_n)
=(\ue_1,\dots,\ue_n)$, we may assume that
$\uu_0=\ue_n$.  Since $\dist(\uu_i,\uu_{i-1})<1$
when $1\le i<s$, Lemma \ref{lemma:comparison_distance_norm}
implies that $\abs{\uu_i\cdot\ue_n}=1$
for each of those $i$.  So, upon replacing $\uu_i$ by
$(\uu_i\cdot\ue_n)^{-1}\uu_i$, we may assume that
$\uu_i\cdot\ue_n=1$.  The norm of $\uu_i$ remains equal to $1$,
and the same lemma combined with \eqref{thm:construction:eq2} gives
\[
 \norm{\uu_i-\uu_{i-1}}
   = \dist(\uu_i,\uu_{i-1})
   = \exp(-q_i)
   \quad
   (1\le i<s).
\]
Moreover, $(q_i)_{0\le i<s}$ is a strictly increasing sequence
of non-negative integers. So, if $s=\infty$, the sequence
$(\uu_i)_{i\ge 0}$ converges in norm to an element $\uu$
of $\Kv^n$ of norm $1$ with
\[
 \norm{\uu_i-\uu} = \exp(-q_{i+1})
 \quad
 (0\le i<s).
\]
If $s<\infty$, the latter inequalities remain true for the choice
of $\uu=\uu_{s-1}$ upon setting $q_s=\infty$.  We claim that
the vector $\uu$ has the requested property.

To show this, let $q\ge 0$ be an arbitrary non-negative integer,
and let $i$ be the index with $0\le i<s$ such that $q_i\le q<q_{i+1}$
(with the above convention that $q_s=\infty$ if $i=s-1<\infty$).
For each $j\in\{1,\dots,n\}$ with $j\neq k_i$, we have
$\uu_i\cdot\ux_j^{(i)}=0$, thus
\[
 \babs{\uu\cdot\ux_j^{(i)}}
  = \babs{(\uu-\uu_i)\cdot\ux_j^{(i)}}
  \le \norm{\uu-\uu_i}\,\bnorm{\ux_j^{(i)}}
   = \exp(-q_{i+1}) \bnorm{\ux_j^{(i)}}
   < {\mathrm{e}}^{-q} \bnorm{\ux_j^{(i)}},
\]
and so
\[
 L_{\ux_j^{(i)}}(q)
  = \max\left\{ \log\bnorm{\ux_j^{(i)}},\
       q+\log \babs{\uu\cdot\ux_j^{(i)}}\right\}
  = \log\bnorm{\ux_j^{(i)}}
  =P_j(q_i).
\]
If $i\ge 1$, we also have $\uu_{i-1}\cdot\ux_{k_i}^{(i)}=0$
because of (B3), and a similar computation gives
\[
 \babs{\uu\cdot\ux_{k_i}^{(i)}}
   \le {\mathrm{e}}^{-q_i} \bnorm{\ux_{k_i}^{(i)}}.
\]
This inequality still holds if $i=0$ because, in that case, its
right hand side is $1$.  So, in all cases we find that
\begin{equation}
 \label{thm:construction:eq3}
 L_{\ux_{k_i}^{(i)}}(q)
  \le q-q_i+\log \bnorm{\ux_{k_i}^{(i)}}
  = q-q_i+P_{k_i}(q_i).
\end{equation}
Since $(\ux_1^{(i)},\dots,\ux_n^{(i)})$ is a basis of $A^n$,
this implies that, for the componentwise partial ordering
on $\bR^n$, we have
\begin{align*}
 \uL_{\uu}(q)
  &\le \Phi_n\left(
       L_{\ux_1^{(i)}}(q),\dots,L_{\ux_n^{(i)}}(q)
       \right) \\
  &\le \Phi_n\left(
       P_1(q_i),\dots,\widehat{P_{k_i}(q_i)},\dots,P_n(q_i),
       q-q_i+P_{k_i}(q_i)
       \right) \\
  &= \uP(q).
\end{align*}
Since the components
of $\uL_\uu(q)$ and of $\uP(q)$ both add up to $q$, this implies that
$\uL_\uu(q)=\uP(q)$ as announced.  Moreover, we must have equality
in \eqref{thm:construction:eq3}.
\end{proof}

Like the proof of lemma \ref{lemma:construction_bases},
the above argument is entirely deterministic in the sense
that it yields a single point $\uu$ with the requested
properties.  Moreover, if $F_0$ denotes the smallest subfield
of $F$, then each $n$-tuple $(\ux_1^{(i)},\dots,\ux^{(i)}_n)$
that it constructs is in fact a basis of $F_0[T]^n$ over
$F_0[T]$, and the corresponding approximation $\uu_i$ of $\uu$ with
$\uu_i\cdot\ue_n=1$ belongs to $F_0(T)^n$.  So these can be
calculated recursively on a computer for a given $n$-system $\uP$.
We further develop this remark below.

\subsection{Universality of the construction}

Let $\uP=(P_1,\dots,P_n)\colon [0,\infty)\to \bR^n$ be
an $n$-system such that $\uP(q)\in\bZ^n$ for each
integer $q\ge 0$.  We claim that, when $F=\bQ$, the point
$\uu$ of $\bQ((1/T))^n$ provided by the proof of
Theorem \ref{thm:construction} belongs in fact to
$\bZ[[1/T]]^n$ and that, for a general field $F$, the
point that it produces is its image $\bar{\uu}\in\Kv^n$ under
the reduction of coefficients from $\bZ$ to $F$.

By induction on $i$, we first note that, when $F=\bQ$,
the $n$-tuples $(\ux_1^{(i)},\dots,\ux^{(i)}_n)$
attached to $\uP$
are bases of $\bZ[T]^n$ and that, for a general field
$F$, the corresponding $n$-tuples are their images
$(\bar{\ux}_1^{(i)},\dots,\bar{\ux}^{(i)}_n)$
under the reduction of coefficients from $\bZ$ to $F$.
When $F=\bQ$, the point $\uu_i$ is the last row in
the inverse transpose of the matrix $M_i$ whose rows are
$\ux_1^{(i)},\dots,\widehat{\ux_{k_{i}}^{(i)}},\dots,
\ux^{(i)}_n,\ue_n$.  However, the condition 5) in
Lemma \ref{lemma:construction_bases} implies that
$\det(M_i)$ is a monic polynomial of $\bZ[T]$ for each
index $i$ with $0\le i<s$.  Thus each $\uu_i$ has coefficients
in $\bZ[[1/T]]$ and the same is true of the vector $\uu$.
In particular, it makes sense to consider their images
$\bar{\uu}_i$ and $\bar{\uu}$ under reduction. Clearly
we have $\bar{\uu}_i\cdot\ue_n=1$ and
$\bar{\uu}_i\cdot\bar{\ux}_j^{(i)}=0$ for each
$j=1,\dots,n$ with $j\neq k_i$.  Thus, we have
$\uL_\uu=\uP$ when working in $\bQ((1/T))^n$ and
$\uL_{\bar{\uu}}=\uP$ when working in $F((1/T))$.

\begin{remark}
Although our construction yields a single point $\uu$ with
$\uL_\uu=\uP$, such a point $\uu$ is far from being unique.
Consider for example an arbitrary $2$-system $\uP=(P_1,P_2)\colon
[0,\infty)\to \bR^2$ for which $P_1$ is unbounded.
There is a unique sequence of integers
$d_0=0<d_1<d_2<\cdots$ such that, upon putting $q_0=0$ and
$q_i=d_{i-1}+d_i$ for each $i\ge 1$, we have
\[
 \uP(q)=\Phi_2(d_i,q-d_i)
 \quad\text{for any}\quad
 i\ge 0
 \et
 q\in[q_i,q_{i+1}].
\]
With this notation, one can check that the point $\uu$
constructed in the proof of Theorem \ref{thm:construction} is
$\uu=(-\xi_0,1)$ where $\xi_0\in\cO_\infty$ has the continued
fraction expansion
\[
 \xi_0=[0,T^{d_1-d_0},T^{d_2-d_1},\dots].
\]
However, the continued fraction $\xi=[a_0,a_1,a_2,\dots]$
has the same property for any sequence
$(a_i)_{i\ge 0}$ in $A=F[T]$ satisfying $a_0\in F$ and
$\deg(a_i)=d_i-d_{i-1}$ for each $i\ge 1$.  Clearly
the point $\uu=(-\xi,1)$ then has $\|\uu\|=1$.  To show that
$\uL_\uu=\uP$, define recursively $\uy_{-1}=(0,1)$,
$\uy_0=(1,a_0)$ and $\uy_i=a_i\uy_{i-1}+\uy_{i-2}$ for each
$i\ge 1$. Then the theory of continued fractions shows that,
with respect to $\uu$, one has
\[
 L_{\uy_{-1}}(q)=q \et L_{\uy_i}(q)=\max\{d_i,q-d_{i+1}\}
 \quad (q\ge 0,\ i\ge 0).
\]
So, for a given integer $i\ge 0$ and a given
$q\in [q_i,q_{i+1}]$, we have $L_{\uy_{i-1}}(q)=q-d_i$
and $L_{\uy_i}(q)=d_i$.  Since $\uy_{i-1}$ and $\uy_i$
form a basis of $A^2$, this implies that, for the
componentwise ordering on $\bR^2$, we have
$\uL_\uu(q)\le \Phi_2(d_i,q-d_i)=\uP(q)$,
and so $\uL_\uu(q)=\uP(q)$ (because both points have the
sum of their coordinates equal to $q$).
\end{remark}

\section{Duality and an alternative normalization}
\label{sec:duality}

Let $\uu\in\Kv^n$ with $\norm{\uu}=1$. It can be
shown that, for each $q\in\bN=\{0,1,2,\dots\}$, the dual of
the convex body $\cC_\uu({\mathrm{e}}^q)$ defined in \S\ref{ssec:Lu}
is
\[
 \cC^*_\uu({\mathrm{e}}^q)
 = \{\uy\in\Kv^n\,;\,
     \norm{\uy}\le {\mathrm{e}}^q
     \ \text{and}\
     \norm{\uu\wedge\uy}\le 1\}.
\]
For each $j=1,\dots,n$ and each $q\in[0,\infty)$, we define
$L^*_{\uu,j}(q)$ to be the minimum of all $t\in \bR$
for which the inequalities
\[
 \norm{\uy}\le {\mathrm{e}}^{q+t}
 \et
 \norm{\uu\wedge\uy}\le {\mathrm{e}}^{t}
\]
admit at least $j$ linearly independent solutions
$\uy$ in $A^n$ so that, when $q\in\bN$, this is
the logarithm of the $j$-th minimum of $\cC^*_{\uu}({\mathrm{e}}^q)$.
Then Theorem \ref{thm:Mahler:duality} gives
\[
 (L^*_{\uu,1}(q),\dots,L^*_{\uu,n}(q))
 = (-L_{\uu,n}(q),\dots,-L_{\uu,1}(q))
\]
for each $q\in \bN$.
This remains true for all $q\in [0,\infty)$ because
a reasoning similar to that in \S\ref{ssec:Lu} shows
that, like $\uL_\uu$, the map $\uL^*_\uu
= (L^*_{\uu,1},\dots,L^*_{\uu,n})$ is affine in each
interval between two consecutive integers.

The analogue of the setting of Schmidt and Summerer
in \cite{SS2013} would require instead to work with the
family of convex bodies of volume $1$ given by
\[
 T^{-q}\cC^*_\uu({\mathrm{e}}^{nq})
 = \{\uy\in\Kv^n\,;\,
     \norm{\uy}\le {\mathrm{e}}^{(n-1)q}
     \ \text{and}\
     \norm{\uu\wedge\uy}\le {\mathrm{e}}^{-q}\}
 \quad
 (q\in\bN).
\]
Associate to this family is the map $\tuL_\uu
=(\tL_{\uu,1},\dots,\tL_{\uu,n})\colon [0,\infty)\to\bR^n$
where $\tL_{\uu,j}(q)$ is the minimum of all $t\in \bR$
for which the inequalities
\[
 \norm{\uy}\le {\mathrm{e}}^{(n-1)q+t}
 \et
 \norm{\uu\wedge\uy}\le {\mathrm{e}}^{-q+t}
\]
admit at least $j$ linearly independent solutions
$\uy$ in $A^n$, and thus $\tL_{\uu,j}(q)=q+L^*_{\uu,j}(nq)$.

 \section{Perfect systems}
\label{sec:perfect}

From now on, we work with several places of $K=F(T)$.  So, we
distinguish the corresponding absolute values with subscripts.
For each $\alpha\in F$, we denote by $K_\alpha=F((T-\alpha))$
the completion of $K$ for the absolute value
$|f|_\alpha={\mathrm{e}}^{-\ord_\alpha(f)}$ where, for $f$ in $K$ or in
$K_\alpha$, the quantity $\ord_\alpha(f)\in\bZ\cup\{\infty\}$
represents the order of $f$ at $\alpha$ (with the convention
that $\ord_\alpha(0)=\infty$).
We also write $|\ |_\infty$ for the absolute value on $K$
and on $\Kv=F((1/T))$ previously denoted without subscript,
so that $|f|_\infty={\mathrm{e}}^{\deg(f)}$ for any series $f\in\Kv$.
For each $\alpha\in F\cup\{\infty\}$ and each integer
$n\ge 1$, we equip $K_\alpha^n$ with the maximum norm
denoted $\|\ \|_\alpha$.

Let $\uf=(f_1,\dots,f_n)$ be an $n$-tuple of elements of
$F[[T]]$.  A linear algebra argument shows that,
for any non-zero $(\varrho_1,\dots,\varrho_n)\in\bN^n$,
there exists a non-zero point $\ua=(a_1,\dots,a_n)$ in
$A^n=F[T]^n$ such that
\begin{equation}
 \label{prefect:eq:alglin}
 \deg (a_i)\le \varrho_i-1 \quad (1\le i\le n)
 \et
 \ord_0(\ua\cdot\uf) \ge \varrho_1+\cdots+\varrho_n-1.
\end{equation}
Following Mahler \cite{Ma1968} and Jager \cite{Ja1964}, we say
that $\uf$ is \emph{normal} for $(\varrho_1,\dots,\varrho_n)$
if any non-zero solution $\ua$ of \eqref{prefect:eq:alglin}
in $A^n$ has $\ord_0(\ua\cdot\uf)= \varrho_1+\cdots+\varrho_n-1$.
Then, those solutions together with $0$ constitute, over $F$,
a one dimensional subspace of $A^n$.  We also say that $\uf$ is a
\emph{perfect system} if it is normal for any
$(\varrho_1,\dots,\varrho_n)\in\bN^n\setminus\{0\}$.

\begin{examples} Suppose that $F$ has characteristic zero.
If $\omega_1,\dots,\omega_n$ are elements of $F$ then
\begin{equation*}
 \label{perfect:eq:exp}
 ({\mathrm{e}}^{\omega_1 T},\dots, {\mathrm{e}}^{\omega_n T})
 \quad\text{where}\quad
 {\mathrm{e}}^{\omega T}=\sum_{j\ge 0} \frac{\omega ^j}{j!}T^j
\end{equation*}
is a perfect system \cite[Theorem 1.2.1]{Ja1964}.
If moreover $\omega_1,\dots,\omega_n$ are pairwise
incongruent modulo $\bZ$ then
\begin{equation*}
 \label{perfect:eq:bin}
 ((1+T)^{\omega_1},\dots, (1+T)^{\omega_n})
 \quad\text{where}\quad
 (1+T)^\omega= \sum_{j=0}^\infty \binom{\omega}{j} T^j,
\end{equation*}
is also a perfect system \cite[Theorem 1.2.2]{Ja1964}.
Finally the $n$-tuple
\begin{equation*}
 \label{perfect:eq:log}
 \left((\log(1-T))^{n-1},\dots, \log(1-T),1\right)
 \quad\text{where}\quad
 \log(1-T)= -\sum_{j=1}^\infty \frac{T^j}{j}
\end{equation*}
is normal for each $(\varrho_1,\dots,\varrho_n)
\in\bN^n\setminus\{0\}$ with $\varrho_1\le\cdots\le\varrho_n$
\cite[Theorem 1.2.3]{Ja1964}.
When $F=\bC$, the first example of a perfect system is due to Hermite in \cite{He1893}, although it
also follows by duality from his earlier work on the transcendence of ${\mathrm{e}}$ in \cite{He1873} (see also \cite{Ma1931}).
To our knowledge, no perfect $n$-system of series
of $F[[T]]$ with $n\ge 2$ is known when $F$ is a finite field.
A short computation shows that there are none when $F$ has two
or three elements.
\end{examples}

In view of the first example above, Theorem B in the
introduction follows from the following result which
also applies to the two other examples as well as to
any perfect system.

\begin{theorem}
Let $\uf=(f_1(T),\dots,f_n(T))\in F[[T]]^n$ with $n\ge 2$.
Suppose that $\uf$ is normal for each diagonal element
$(\varrho,\dots,\varrho)\in\bN^n\setminus\{0\}$.
Then the point $\uu=(f_1(1/T),\dots,f_n(1/T))\in\Kv^n$
satisfies $\|\uu\|_\infty=1$ and its associated map $\uL_\uu$
is the unique $n$-system $\uP$ characterized
by the property \eqref{intro:eq:extremalP}.
\end{theorem}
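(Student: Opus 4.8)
The plan is to translate the normality hypothesis on $\uf$ into precise information about the trajectories of the relevant polynomial vectors, and then to read off the successive minima of $\cC_\uu({\mathrm{e}}^q)$ at integer values $q = m n$. First I would verify that $\|\uu\|_\infty = 1$: each $f_i(1/T)$ lies in $F[[1/T]] = \cOv$, so $\|\uu\|_\infty \le 1$, and since the constant terms $f_i(0)$ are not all zero (normality for $(\varrho,\dots,\varrho)$ with $\varrho$ small, or a direct check that $\ua = \ue_i$ with $f_i(0)\ne 0$ forces the right order), we get equality. The key observation is the dictionary between the affine world over $F[[T]]$ in which normality is phrased and the world at the place $\infty$: a polynomial $a_i(T) \in A$ of degree $\le \varrho - 1$ corresponds, after the substitution $T \mapsto 1/T$ and clearing $T^{-(\varrho-1)}$, to a polynomial $\tilde a_i(T) \in A$ of degree $\le \varrho - 1$ with $\|\tilde\ua\|_\infty \le {\mathrm{e}}^{\varrho-1}$, and the order condition $\ord_0(\ua\cdot\uf) \ge n\varrho - 1$ becomes $|\tux \cdot \uu|_\infty \le {\mathrm{e}}^{-(n\varrho - 1) + (\varrho - 1)} = {\mathrm{e}}^{-(n-1)\varrho}$ (up to bookkeeping of the exponents — this is the one routine computation I will carry out carefully).

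Next I would extract the minima. Fix $m \in \bN$ and set $q = mn$, $\varrho = m + 1$. By the linear algebra argument quoted before the theorem, there is a nonzero $\ua$ solving \eqref{prefect:eq:alglin}; by normality the solution space is one-dimensional over $F$, spanned by a primitive polynomial vector $\ux^{(m)}$ whose transform $\tux^{(m)}$ satisfies $\|\tux^{(m)}\|_\infty \le {\mathrm{e}}^m$ and $|\uu \cdot \tux^{(m)}|_\infty \le {\mathrm{e}}^{-(n-1)(m+1)}$, hence $L_{\tux^{(m)}}(q) \le m$ for $q = mn$ (here $q + \log|\uu\cdot\tux^{(m)}|_\infty \le mn - (n-1)(m+1) = m - (n-1) \le m$). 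This produces one linearly independent vector realizing a value $\le m$, giving $L_{\uu,1}(mn) \le m$. Combined with (S1), which forces $L_{\uu,1}(mn) \ge \lfloor mn/n \rfloor = m$, we get $L_{\uu,1}(mn) = m$ exactly. Dually — using Theorem \ref{thm:Mahler:duality}, or equivalently the fact that $\uf$ normal implies its ``dual'' $n$-tuple is normal and running the same argument, or most simply symmetry of the extremal configuration — one obtains $L_{\uu,n}(mn) = m$ as well, so $L_{\uu,1}(mn) = \cdots = L_{\uu,n}(mn) = m = mn/n$. The uniqueness clause at the end of the introduction (``the unique $n$-system with $P_n(q) - P_1(q) \le 1$'') then lets me conclude: I have shown $P_n(mn) - P_1(mn) = 0$ for every $m$, and since an $n$-system is affine between consecutive integers with all slopes $0$ or $1$ and total slope $1$, the values at the non-multiples of $n$ are then forced, giving $P_n(q) - P_1(q) \le 1$ for all $q \ge 0$; by Theorem A the map $\uL_\uu$ is an $n$-system, so it must be the unique one of \eqref{intro:eq:extremalP}.

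Alternatively, and perhaps more cleanly, I would argue directly that $L_{\uu,1}(mn) = m$ for all $m$ already pins down the $n$-system among those mapping $\bN$ to $\bN^n$: if $P_1(mn) = m$ for every $m$, then between $mn$ and $(m+1)n$ the coordinate $P_1$ can increase by at most $1$ over $n$ unit steps, and the constraint $\sum P_j = q$ together with monotonicity and the slope condition (S2) forces exactly the staircase of Figure \ref{intro:fig}; in particular $P_n(q) = \lceil q/n \rceil$ follows. So it suffices to establish the single family of equalities $L_{\uu,1}(mn) = m$, and for the upper bound $L_{\uu,1}(mn) \le m$ only the existence and normality of the Padé-type solution $\ux^{(m)}$ at the diagonal index $(m+1,\dots,m+1)$ is needed — which is exactly the hypothesis. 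The main obstacle I anticipate is purely notational: getting the exponents right in the change of variable $T \leftrightarrow 1/T$ (the degree of $a_i$ versus the degree of $T^{\varrho-1} a_i(1/T)$, and how $\ord_0(\ua\cdot\uf)$ converts to $|\tux\cdot\uu|_\infty$), since an off-by-one error there would break the inequality $L_{\tux^{(m)}}(mn) \le m$. Everything else is a direct appeal to (S1), to Theorem A, and to the uniqueness characterization of the extremal $n$-system already recorded in the introduction.
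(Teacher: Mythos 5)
The argument has a genuine gap, and it lies exactly at the point you flag as ``routine bookkeeping''. Your construction produces, from the Pad\'e-type solution $\ua$ at the diagonal index $(m+1,\dots,m+1)$, a nonzero $\tux^{(m)}\in A^n$ with $L_{\tux^{(m)}}(mn)\le m$, which gives the \emph{upper} bound $L_{\uu,1}(mn)\le m$. You then write ``Combined with (S1), which forces $L_{\uu,1}(mn)\ge\lfloor mn/n\rfloor=m$.'' That is backwards: (S1) gives $nP_1(q)\le P_1(q)+\cdots+P_n(q)=q$, hence $P_1(q)\le\lfloor q/n\rfloor$, i.e.\ another upper bound. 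The inequality chain quoted in the introduction is $P_1(q)\le\lfloor q/n\rfloor\le\lceil q/n\rceil\le P_n(q)$, so from (S1) you only recover what your construction already proved. In fact your entire use of the hypothesis is through the mere \emph{existence} of a solution of \eqref{prefect:eq:alglin}, which is the free linear-algebra count; normality is never invoked in a way that could exclude anything.

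The missing content is precisely the \emph{lower} bound $L_{\uu,1}(q)\ge\lfloor q/n\rfloor$, which is where normality must act. The correct direction is a ``no short vectors'' argument: take any nonzero $\ux\in A^n$ achieving $t=L_{\uu,1}(q)$, convert to $\ua$ (with $a_i(T)=T^t x_i(1/T)$, so $\deg a_i\le t$ and $\ord_0(\ua\cdot\uf)=t-\deg(\ux\cdot\uu)\ge q$). If $q\ge n(t+1)$ this would make $\ua$ a nonzero solution of \eqref{prefect:eq:alglin} for $(t+1,\dots,t+1)$ whose order exceeds $n(t+1)-1$, contradicting normality; hence $q<n(t+1)$, i.e.\ $t\ge\lfloor q/n\rfloor$. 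Your fallback remark about duality or ``symmetry'' does not repair the gap either: knowing $L_{\uu,1}(mn)\le m$ together with $L_{\uu,n}(mn)\ge m$ (the honest consequence of duality or of (S1)) and $\sum_j L_{\uu,j}(mn)=mn$ does not pin all coordinates to $m$ — for instance $(1,2,3)$ is compatible with all three constraints when $n=3$, $m=2$. Once the lower bound is in place, your third paragraph is correct: $L_{\uu,1}(mn)=m$ for all $m$, together with monotonicity and the slope-$0/1$ structure, does force $L_{\uu,n}(q)-L_{\uu,1}(q)\le1$ and hence the extremal system \eqref{intro:eq:extremalP}.
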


\begin{proof}
Since $\uf$ is normal for $(1,\dots,1)$, we have
$\|\uf\|_0=1$, thus $\|\uu\|_\infty=\|\uf\|_0=1$.
Fix $q\in\bN$ and let $t=L_{\uu,1}(q)\in\bN$.  By definition there
exists a non-zero point $\ux=(x_1(T),\dots,x_n(T))$ in $A^n$ such
that
\[
 \|\ux\|_\infty\le {\mathrm{e}}^t
 \et
 |\ux\cdot\uu|_\infty \le {\mathrm{e}}^{t-q}.
\]
Then, for each $i=1,\dots,n$, the polynomial
$a_i(T)=T^tx_i(1/T)$ satisfies $\deg(a_i(T))\le t$ and
we find that
\[
 \ord_0\big(a_1(T)f_1(T)+\cdots+a_n(T)f_n(T)\big)
   = t - \deg\big(\ux\cdot\uu\big)
   \ge q.
\]
Since $\uf$ is normal for $(t+1,\dots,t+1)$, this implies that
$n(t+1)>q$ or equivalently that
\[
 L_{\uu,1}(q)
  \ge \left\lfloor \frac{q}{n} \right\rfloor
 \quad
 (q\in\bN).
\]
For $q=mn$ with $m\in\bN$, this gives $L_{\uu,1}(mn)\ge m$
and, since the coordinates of $L_\uu(mn)$ form a monotone
increasing sequence with sum $mn$, all of these
are equal to $m$, in particular $L_{\uu,1}(mn)=L_{\uu,n}(mn)=m$.
Now let $q\ge 0$ be any real number and let $m\in\bN$ such
that $mn\le q\le (m+1)n$.  Since $L_{\uu,1}$ and $L_{\uu,n}$
are monotone increasing, we find
\[
 L_{\uu,n}(q)-L_{\uu,1}(q)
 \le L_{\uu,n}((m+1)n)-L_{\uu,1}(mn) = 1.
\]
As observed in the introduction, this characterizes $\uL_\uu$
as the $n$-system described in there.
\end{proof}

In the case where $\uf$ is normal for each
$(\varrho_1,\dots,\varrho_n)\in\bN^n\setminus\{0\}$
with $\varrho_1\le\cdots\le\varrho_n$ and $\varrho_n\le \varrho_1+1$,
it is also possible to relate the points which
realize the successive minima to the corresponding solutions
of \eqref{prefect:eq:alglin}.  To this end, we note that each integer
$i\ge 1$ can be written as a sum $i=\varrho_{i,1}+\cdots+\varrho_{i,n}$
for a unique such $n$-tuple given by
$\varrho_{i,j}=\lceil(i+j-n)/n\rceil$ for $j=1,\dots,n$.   Define
$\uy_i=T^{\varrho_{i,n}-1}(a_{i,1}(1/T),\dots,a_{i,n}(1/T))$ where
$\ua_i=(a_{i,1},\dots,a_{i,n})$ is a corresponding non-zero solution
of \eqref{prefect:eq:alglin}.  Then $\uy_i\in A^n$ because
$\deg(a_{i,j})\le \varrho_{i,n}-1$
for $j=1,\dots,n$.  Moreover, we have
\[
 \|\uy_i\|_\infty
  ={\mathrm{e}}^{\varrho_{i,n}-1}\|\ua_i\|_0
  ={\mathrm{e}}^{\lceil i/n\rceil-1}
 \et
 |\uy_i\cdot\uu|_\infty
  ={\mathrm{e}}^{\varrho_{i,n}-1}|\ua_i\cdot\uf|_0
  ={\mathrm{e}}^{\lceil i/n\rceil-i}
\]
because $\|\ua_i\|_0=1$ and $|\ua_i\cdot\uf|_0={\mathrm{e}}^{-i+1}$.
Thus, with respect to the point $\uu$, we deduce that
\[
 L_{\uy_i}(q)=\max\{\lceil i/n\rceil-1,q+\lceil i/n\rceil-i\}
 \quad
 (q\ge 0, i\ge 1).
\]
In particular the trajectory of $\uy_i$ changes slope
from $0$ to $1$ at the point $q=i-1$.
The hypothesis also implies that $\deg(a_{i,j})\le
\lceil(i+j-2n)/n\rceil$ for each $i\ge 1$ and
each $j=1,\dots,n$, with equality when
$i+j\equiv 1 \mod n$. This in turn implies that
$\det(\ua_i,\dots,\ua_{i+n-1})$ is a non-zero
polynomial of degree $i-1$ for each $i\ge 1$.
Thus, the points $\uy_i,\uy_{i+1},\dots,\uy_{i+n-1}$
are linearly independent over $K$ and so, for
each $q\in [i-1,i]$, we obtain
\[
\begin{aligned}
 \uL_\uu(q)
 &\le \Phi_n\big(L_{\uy_i}(q),\dots,L_{\uy_{i+n-1}}(q)\big)\\
 &= \Phi_n\Big(q+\Big\lceil \frac{i}{n}\Big\rceil-i,
      \Big\lceil \frac{i+1}{n}\Big\rceil-1, \dots,
      \Big\lceil \frac{i+n-1}{n}\Big\rceil-1 \Big).
\end{aligned}
\]
Since the arguments of $\Phi_n$ in the last expression add
up to $q$, we conclude that the latter is equal to $\uL_\uu(q)$.
Therefore
$\uy_i,\uy_{i+1},\dots,\uy_{i+n-1}$ realize the minima
of $\cC_\uu({\mathrm{e}}^q)$ for $q=i-1$ and for $q=i$, while their
trajectories cover the combined graph of $\uL_\uu$ over
the interval $[i-1,i]$.

 \section{An adelic estimate}
\label{sec:adelic}

In this section we assume that $F=\bC$ so that, for each
$\omega$ and $\alpha$ in $\bC$, we may define
\[
 {\mathrm{e}}^{\omega T}
  := {\mathrm{e}}^{\omega\alpha}\sum_{j=0}^\infty \frac{\omega^j}{j!}(T-\alpha)^j
  \in \bC[[T-\alpha]].
\]
We also fix an integer $n\ge 1$ and $n$ distinct complex
numbers $\omega_1,\dots,\omega_n\in\bC$.  Our last
main result is the following.

\begin{theorem}\label{Theorem:adelic}
Let $S=\{\alpha_1,\dots,\alpha_s\}$ be a finite
subset of $\bC$ of cardinality $s\ge 1$.
Then, for any $n$-tuple of non-zero polynomials
$\ua=(a_1(T),\dots,a_n(T))$ in $\bC[T]$, we have
\[
 |a_1|_\infty\cdots|a_n|_\infty
 \prod_{j=1}^s
  \Big(
  \|\ua\|_{\alpha_j}^{-1}
  |a_1|_{\alpha_j}\cdots|a_n|_{\alpha_j}
  |\ua\cdot\uf|_{\alpha_j}
  \Big)
 \ge C(n)^{-s}
\]
where $\uf=({\mathrm{e}}^{\omega_1T},\dots,{\mathrm{e}}^{\omega_nT})$
and $C(n)=\exp(n(n-1)/2)$.
\end{theorem}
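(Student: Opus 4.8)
The plan is to pass to logarithms and then prove the resulting inequality on orders of vanishing by means of a single Wronskian computation. For $1\le i\le n$ and $1\le j\le s$, set $m_{ij}=\ord_{\alpha_j}(a_i)$, $\mu_j=\ord_{\alpha_j}(\ua\cdot\uf)$ and $\nu_j=\min_{1\le i\le n}m_{ij}$, so that $|a_i|_\infty={\mathrm{e}}^{\deg a_i}$, $|a_i|_{\alpha_j}={\mathrm{e}}^{-m_{ij}}$, $\|\ua\|_{\alpha_j}={\mathrm{e}}^{-\nu_j}$ and $|\ua\cdot\uf|_{\alpha_j}={\mathrm{e}}^{-\mu_j}$. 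Taking logarithms, the asserted inequality becomes
\[
 \sum_{i=1}^{n}\deg a_i+s\binom{n}{2}\ \ge\ \sum_{j=1}^{s}\Big(\mu_j+\sum_{i=1}^{n}m_{ij}-\nu_j\Big).
\]
One may assume each $a_i$ is non-zero, since otherwise the left-hand side of the theorem vanishes.

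The main object will be the Wronskian $W(T)=W\big(a_1{\mathrm{e}}^{\omega_1T},\dots,a_n{\mathrm{e}}^{\omega_nT}\big)$. Using $\frac{d^{k}}{dT^{k}}(a_i{\mathrm{e}}^{\omega_iT})={\mathrm{e}}^{\omega_iT}\big(\frac{d}{dT}+\omega_i\big)^{k}a_i$ and factoring ${\mathrm{e}}^{\omega_iT}$ out of the $i$-th column of the defining determinant, one gets $W(T)=\Omega(T)\,{\mathrm{e}}^{(\omega_1+\cdots+\omega_n)T}$ with
\[
 \Omega(T)=\det\Big(\big(\tfrac{d}{dT}+\omega_i\big)^{k}a_i\Big)_{0\le k\le n-1,\ 1\le i\le n}\ \in\ \bC[T].
\]
The entry in row $k$ and column $i$ has degree at most $\deg a_i$, so the Leibniz expansion gives $\deg\Omega\le\sum_{i=1}^{n}\deg a_i$. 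Since the $\omega_i$ are distinct and every $a_i\neq 0$, the entire functions $a_i{\mathrm{e}}^{\omega_iT}$ are linearly independent over $\bC$, whence $W\not\equiv 0$ and so $\Omega\not\equiv 0$. As $\alpha_1,\dots,\alpha_s$ are distinct, these facts give
\[
 \sum_{j=1}^{s}\ord_{\alpha_j}(\Omega)\ \le\ \deg\Omega\ \le\ \sum_{i=1}^{n}\deg a_i.
\]

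The heart of the proof is the local lower bound
\[
 \ord_{\alpha_j}(\Omega)\ \ge\ \mu_j+\sum_{i=1}^{n}m_{ij}-\nu_j-\binom{n}{2}\qquad(1\le j\le s).
\]
To obtain it, I would choose an index $i_0$ with $m_{i_0j}=\nu_j$ and replace the function $a_{i_0}{\mathrm{e}}^{\omega_{i_0}T}$ in $W$ by $P:=\ua\cdot\uf=\sum_{i=1}^{n}a_i{\mathrm{e}}^{\omega_iT}$; this is an invertible $\bC$-linear substitution on the $n$-tuple of functions, so it only multiplies $W$ by a non-zero scalar. Then one uses the elementary fact that for functions $g_1,\dots,g_n$ holomorphic in a neighbourhood of a point $\beta$ one has $\ord_\beta\big(W(g_1,\dots,g_n)\big)\ge\sum_{l}\ord_\beta(g_l)-\binom{n}{2}$: the entry $g_l^{(k)}$ of the Wronskian matrix vanishes at $\beta$ to order at least $\ord_\beta(g_l)-k$, so each term of the Leibniz expansion vanishes to order at least $\sum_l\ord_\beta(g_l)-\sum_{k=0}^{n-1}k=\sum_l\ord_\beta(g_l)-\binom{n}{2}$. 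Applying this at $\beta=\alpha_j$ to $P$ and to the functions $a_i{\mathrm{e}}^{\omega_iT}$ with $i\neq i_0$, whose orders at $\alpha_j$ are $\mu_j$ and $m_{ij}$ respectively, yields $\ord_{\alpha_j}(\Omega)=\ord_{\alpha_j}(W)\ge\mu_j+\sum_{i\neq i_0}m_{ij}-\binom{n}{2}$, which is the claimed bound since $\sum_{i\neq i_0}m_{ij}=\sum_{i}m_{ij}-\nu_j$.

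Summing the local bound over $j=1,\dots,s$ and combining it with the degree estimate for $\Omega$ would give at once $\sum_i\deg a_i\ \ge\ \sum_{j}\big(\mu_j+\sum_i m_{ij}-\nu_j\big)-s\binom{n}{2}$, which is the logarithmic form of the theorem recorded above. The step I expect to require the most care is the local bound, and within it the decision to substitute $P$ precisely for the factor whose order $m_{i_0j}$ is minimal: that single choice is what lets one Wronskian simultaneously record the approximation quality $\mu_j$, the common order of vanishing $\nu_j$, and the total order of vanishing $\sum_i m_{ij}$ of the $a_i$ at $\alpha_j$. Note that no normality hypothesis on $\uf$ is needed; the distinctness of $\omega_1,\dots,\omega_n$ enters only to guarantee that $\Omega\not\equiv 0$.
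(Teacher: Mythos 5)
Your argument is correct and essentially the same as the paper's: both form the determinant $\Omega=\Delta=\det\big((\omega_i+d/dT)^k a_i\big)$, substitute one column (chosen by minimality of $\ord_{\alpha_j}(a_{i_0})$) so that the derivatives of $\ua\cdot\uf$ appear, bound orders via $\ord(f^{(k)})\ge\ord(f)-k$, and compare the total vanishing at $\alpha_1,\dots,\alpha_s$ against $\deg\Omega\le\sum_i\deg a_i$, which is the product formula in logarithmic form. The one cosmetic difference is that the paper establishes $\Omega\neq 0$ (and computes $\deg\Omega$ exactly) by identifying the leading coefficient of $\Omega$ as $c_1\cdots c_n$ times a Vandermonde determinant in the $\omega_i$, whereas you invoke the theorem that linearly independent analytic functions have a nonvanishing Wronskian.
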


\begin{proof}
Fix a choice of non-zero polynomials $a_1,\dots,a_n$ in
$\bC[T]$.  Put $\ua=(a_1,\dots,a_n)$ and,
for $i=1,\dots,n$, let $c_iT^{d_i}$ denote
the leading monomial of $a_i(T)$.  For each $k\in\bN$,
we write
\[
 \Big(\frac{d}{dT}\Big)^k
   \big(a_1(T){\mathrm{e}}^{\omega_1T}+\cdots+a_n(T){\mathrm{e}}^{\omega_nT}\big)
 = a_{k,1}(T){\mathrm{e}}^{\omega_1T}+\cdots+a_{k,n}(T){\mathrm{e}}^{\omega_nT}
\]
where $a_{k,i}(T)=(\omega_i+d/dT)^ka_i(T)
= \omega_i^kc_iT^{d_i}\, +\,
(\text{terms of lower degree})$.  Define
\[
 \ua_k=(a_{k,1}(T),\dots,a_{k,n}(T))
 \quad
 (0\le k<n),
\]
and put $\Delta=\det(\ua_0,\dots,\ua_{n-1})$. Then $\Delta$
is a non-zero polynomial of degree $d=d_1+\dots+d_n$ whose
coefficient of $T^d$ is the product of $c_1\cdots c_n\neq 0$
with the Vandermonde determinant
$\det(\omega_i^k)\neq 0$ (using the convention that $0^0=1$
if $\omega_i=0$ for some $i$). Thus we have
\[
 |\Delta|_\infty = |a_1|_\infty\cdots|a_n|_\infty.
\]

Now fix a choice of $j\in\{1,\dots,s\}$.  Put
$\alpha=\alpha_j$ and choose $\ell\in\{1,\dots,n\}$
such that $\|\ua\|_{\alpha}=|a_\ell|_\alpha$.  Define also
\[
 \ub_k=(\ua_k\cdot\uf,a_{k,1},\dots,\widehat{a_{k,\ell}},\dots,a_{k,n})
 \quad
 (0\le k<n).
\]
Since $|{\mathrm{e}}^{\omega_\ell T}|_\alpha=1$, we have
$|\Delta|_\alpha=|\det(\ub_0,\dots,\ub_{n-1})|_\alpha$.
On the other hand, since $\ua_k\cdot\uf$ is the $k$-th
derivative of $\ua\cdot\uf$, we have
\[
 \ord_\alpha(\ua_k\cdot\uf)
   \ge \ord_\alpha(\ua\cdot\uf) - k
 \quad
 (0\le k<n),
\]
and similarly
\[
 \ord_\alpha(a_{k,i})\ge \ord_\alpha(a_i)-k
 \quad
 (0\le k<n,\ 1\le i\le n).
\]
From this we deduce that
\[
 \ord_\alpha(\Delta)
  \ge
  -\binom{n}{2} + \ord_\alpha(\ua\cdot\uf)
  + \ord_\alpha(a_1)+\cdots+\widehat{\ord_\alpha(a_\ell)}
  +\cdots+\ord_\alpha(a_n),
\]
and thus
\[
 |\Delta|_\alpha
  \le
  C(n) \|\ua\|_{\alpha}^{-1}
  |a_1|_{\alpha}\cdots|a_n|_{\alpha}
  |\ua\cdot\uf|_{\alpha}
 \quad
 (\alpha\in\{\alpha_1,\dots,\alpha_s\}).
\]
The conclusion follows because the product
formula yields $1\le |\Delta|_\infty
|\Delta|_{\alpha_1}\cdots|\Delta|_{\alpha_s}$.
\end{proof}

\begin{remark}
Under the assumptions of Theorem \ref{Theorem:adelic}, the above
argument also yields
\[
 |a_1|_\infty\cdots|a_n|_\infty
 \prod_{j=1}^s
  |\ua\cdot\uf|_{\alpha_j}
 \ge C'(n)^{-s}
\]
with $C'(n)=\exp(n-1)$.  The latter estimate is best possible
for any choice of $n,s\ge 1$ as one sees by expanding $({\mathrm{e}}^T-1)^{n-1}$
in the form $\ua\cdot\uf$ with $\omega_j=j-1$ and
$a_j(T)=\binom{n-1}{j-1}(-1)^{n-j}$ for $j=1,\dots,n$ and
by choosing the points $\alpha_j=2\pi ji$ for $j=1,\dots,s$.
Then we have $|a_j|_\infty=1$ for $j=1,\dots,n$ and
$|\ua\cdot\uf|_{\alpha_j}=C'(n)^{-1}$ for $j=1,\dots,s$.
This construction shows that the constant $C(n)$ in
Theorem \ref{Theorem:adelic} cannot be replaced by a number
less than $\exp(n-1)$.
\end{remark}

By a change of variables, we deduce from
Theorem \ref{Theorem:adelic} the following
statement involving the functions ${\mathrm{e}}^{\omega_i/T}$.

\begin{corollary}
Let $\ua=(a_1(T),\dots,a_n(T))$ be an $n$-tuple of
non-zero polynomials in $\bC[T]$.  Then, we have
\[
 |a_1|_0\cdots|a_n|_0
 |a_1|_\infty\cdots|a_n|_\infty
 |\ua\cdot\uu|_\infty
 \ge C(n)^{-1} \|\ua\|_\infty,
\]
where $\uu=({\mathrm{e}}^{\omega_1/T},\dots,{\mathrm{e}}^{\omega_n/T})$
and where $C(n)$ is as in the theorem.
\end{corollary}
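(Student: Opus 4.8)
The plan is to deduce the Corollary from Theorem \ref{Theorem:adelic} by the substitution $T\mapsto 1/T$, which interchanges the behaviour at the place $\infty$ with that at the place $0$. First I would apply the theorem in the special case $S=\{0\}$ (so $s=1$ and $\alpha_1=0$), which is legitimate since $0\in\bC$; this gives, for any $n$-tuple of non-zero polynomials $\ub=(b_1(T),\dots,b_n(T))$,
\[
 |b_1|_\infty\cdots|b_n|_\infty\,
 \|\ub\|_0^{-1}\,|b_1|_0\cdots|b_n|_0\,|\ub\cdot\ug|_0
 \ge C(n)^{-1},
\]
where $\ug=({\mathrm{e}}^{\omega_1T},\dots,{\mathrm{e}}^{\omega_nT})$ with the expansions taken at $\alpha=0$, i.e. $\ug=\uf$ in the notation of the theorem.

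Next I would perform the change of variable. Given the $n$-tuple $\ua=(a_1(T),\dots,a_n(T))$ in the statement of the Corollary, set $d_i=\deg(a_i)$ and $b_i(T)=T^{d_i}a_i(1/T)$ for $i=1,\dots,n$; each $b_i$ is a non-zero polynomial in $\bC[T]$ with $b_i(0)\neq 0$, so $|b_i|_0=1$. The key elementary identities to record are: $|b_i|_\infty={\mathrm{e}}^{d_i}=|a_i|_\infty$ from the leading term, and $|a_i|_0={\mathrm{e}}^{-d_i}$ so that $|a_i|_0|a_i|_\infty=1$; the norm transforms as $\|\ub\|_0=\max_i|b_i(0)|=\max_i|{\rm lead}(a_i)|$, while one checks $\|\ua\|_\infty=\max_i{\mathrm{e}}^{d_i}$. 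The crucial one is the pairing: since ${\mathrm{e}}^{\omega/T}$ is by definition $\sum_j\frac{\omega^j}{j!}T^{-j}$, which is exactly ${\mathrm{e}}^{\omega T}$ evaluated at $1/T$, one has $\ua\cdot\uu=\sum_i a_i(T){\mathrm{e}}^{\omega_i/T}$, and substituting $T\mapsto 1/T$ turns this into $\sum_i a_i(1/T){\mathrm{e}}^{\omega_iT}=\sum_i T^{-d_i}b_i(T){\mathrm{e}}^{\omega_iT}$. Clearing denominators by $T^{d_1+\dots+d_n}$ relates $\ord_0$ of $\ub\cdot\ug$ to $\ord_\infty$ of $\ua\cdot\uu$, yielding $|\ub\cdot\ug|_0={\mathrm{e}}^{d_1+\dots+d_n}|\ua\cdot\uu|_\infty$ after accounting for the shifts; I would carry out this bookkeeping carefully since that is where the powers of $T$ must balance exactly.

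Finally I would substitute all these identities into the special case of the theorem displayed above. The factor $|b_1|_\infty\cdots|b_n|_\infty$ becomes $|a_1|_\infty\cdots|a_n|_\infty$; the factor $|b_1|_0\cdots|b_n|_0$ is $1$; and combining $|\ub\cdot\ug|_0$ with the leftover powers of $T$ and with $\|\ub\|_0^{-1}$ reproduces, after multiplying through, precisely
\[
 |a_1|_0\cdots|a_n|_0\,|a_1|_\infty\cdots|a_n|_\infty\,|\ua\cdot\uu|_\infty
 \ge C(n)^{-1}\,\|\ua\|_\infty .
\]
The main obstacle here is purely notational rather than conceptual: keeping track of the total exponent $d_1+\dots+d_n$ of $T$ introduced by the substitution $T\mapsto 1/T$ and verifying that it cancels correctly between the pairing term, the individual $|a_i|_0$ factors, and the two norm factors $\|\ub\|_0$ and $\|\ua\|_\infty$. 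Once the dictionary between the places $0$ and $\infty$ under $T\leftrightarrow 1/T$ is set up cleanly, the inequality is immediate.
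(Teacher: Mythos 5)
Your high-level strategy — apply Theorem~\ref{Theorem:adelic} with $S=\{0\}$ and then pass from $0$ to $\infty$ by the change of variable $T\mapsto 1/T$ — is exactly the route the paper takes. However, the specific scaling you choose breaks the argument at the pairing term.

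You set $b_i(T)=T^{d_i}a_i(1/T)$ with \emph{individual} exponents $d_i=\deg(a_i)$. After the substitution $T\mapsto 1/T$, the image of $\ua\cdot\uu$ is $\sum_i a_i(1/T)\,{\mathrm{e}}^{\omega_i T}=\sum_i T^{-d_i}b_i(T)\,{\mathrm{e}}^{\omega_i T}$. Because the powers $T^{-d_i}$ differ from term to term, this expression is \emph{not} equal to $T^{-c}\,\ub\cdot\ug$ for any single exponent $c$, so there is no identity of the form $|\ub\cdot\ug|_0 = {\mathrm{e}}^{c}\,|\ua\cdot\uu|_\infty$ to write down. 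Your proposed step ``clearing denominators by $T^{d_1+\cdots+d_n}$'' produces $\sum_i T^{d_1+\cdots+d_n-d_i}b_i(T){\mathrm{e}}^{\omega_i T}$, which is not $\ub\cdot\ug$. A concrete check (take $n=2$, $a_1=1$, $a_2=T$) already gives $|\ub\cdot\ug|_0=1$ while ${\mathrm{e}}^{d_1+d_2}|\ua\cdot\uu|_\infty={\mathrm{e}}^2$, so your claimed relation fails. The fix, used in the paper, is to clear by a \emph{uniform} power: set $x_i=T^{d}a_i(1/T)$ with the single exponent $d=\max_i d_i$, so that $\ux\cdot\uf=T^d\sum_i a_i(1/T){\mathrm{e}}^{\omega_iT}$ gives $|\ux\cdot\uf|_0={\mathrm{e}}^{-d}|\ua\cdot\uu|_\infty$, and simultaneously $\|\ux\|_0={\mathrm{e}}^{-d}\|\ua\|_\infty$, which is needed to produce the factor $\|\ua\|_\infty$ on the right-hand side (your $\|\ub\|_0=1$ loses that information).

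There are also errors in the intermediate identities you record. Neither $|b_i|_\infty={\mathrm{e}}^{d_i}$ nor $|a_i|_0={\mathrm{e}}^{-d_i}$ holds in general: the first requires $a_i(0)\neq 0$ and the second requires $a_i$ to be a monomial. (For instance $a_i(T)=1+T$ has $|a_i|_0=1$, not ${\mathrm{e}}^{-1}$, and then $|a_i|_0|a_i|_\infty={\mathrm{e}}\neq 1$, contradicting your claim.) What is actually true is the combined relation $|b_i|_\infty|b_i|_0={\mathrm{e}}^{d_i-\ord_0(a_i)}=|a_i|_\infty|a_i|_0$, which is the one used in the paper (with $x_i$ in place of $b_i$). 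With the uniform scaling $x_i=T^d a_i(1/T)$ and the three identities $|x_i|_\infty|x_i|_0=|a_i|_\infty|a_i|_0$, $\|\ux\|_0={\mathrm{e}}^{-d}\|\ua\|_\infty$, and $|\ux\cdot\uf|_0={\mathrm{e}}^{-d}|\ua\cdot\uu|_\infty$, the bookkeeping closes cleanly, whereas under your scaling it cannot.
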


\begin{proof}
Let $d$ be the largest of the degrees of $a_1,\dots,a_n$.
Set
\[
 \ux=(x_1,\dots,x_n)=(T^da_1(1/T),\dots,T^da_n(1/T))
 \et
 \uf=({\mathrm{e}}^{\omega_1T},\dots,{\mathrm{e}}^{\omega_nT}).
\]
Since $x_1,\dots,x_n$ are non-zero polynomials, the
preceding theorem gives
\[
 |x_1|_\infty\cdots|x_n|_\infty
 |x_1|_0\cdots|x_n|_0
 |\ux\cdot\uf|_0
 \ge C(n)^{-1} \|\ux\|_0.
\]
The conclusion follows because, for each $i=1,\dots,n$,
we have $\deg(x_i)=d-\ord_0(a_i)$
and $\ord_0(x_i)=d-\deg(a_i)$, thus $|x_i|_\infty|x_i|_0=
|a_i|_0|a_i|_\infty$, while $\|\ux\|_0={\mathrm{e}}^{-d}\|\ua\|_\infty$
and $|\ux\cdot\uf|_0= {\mathrm{e}}^{-d} |\ua\cdot\uu|_\infty$.
\end{proof}

We conclude with two sets of inequalities, the second
one being the result announced by Baker in \cite{Ba1964}
and proved there in the case $n=3$, except for the value
of the constant.

\begin{corollary}
Let $a_1(T),\dots,a_n(T)$ be
non-zero polynomials in $\bC[T]$.  Then, we have
\begin{align*}
 &\big|a_1(T){\mathrm{e}}^{\omega_1/T}+\cdots+a_n(T){\mathrm{e}}^{\omega_n/T}\big|_\infty
  \prod_{i=2}^n |a_i(T)|_\infty
  \ge C(n)^{-1}, \\
 &|a_1(T)|_\infty
  \prod_{i=2}^n
  \big|a_1(T){\mathrm{e}}^{\omega_i/T}-a_i(T){\mathrm{e}}^{\omega_1/T}\big|_\infty
  \ge C(n)^{-(n-1)}.
\end{align*}
\end{corollary}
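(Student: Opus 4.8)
The first inequality is a direct rewriting of the preceding corollary. Since $a_1,\dots,a_n$ are nonzero polynomials of $\bC[T]$ we have $\ord_0(a_i)\ge 0$ and $\deg(a_i)\ge 0$ for every $i$, so $|a_i|_0\le 1$ and $|a_i|_\infty\ge 1$; in particular $|a_1|_0\cdots|a_n|_0\le 1$ and $\|\ua\|_\infty=\max_i|a_i|_\infty\ge|a_1|_\infty$. Substituting $\|\ua\|_\infty\ge|a_1|_\infty$ into the corollary, dividing both sides by $|a_1|_\infty$, and then discarding the factor $|a_1|_0\cdots|a_n|_0\le 1$ on the left, one is left with $\big|a_1{\mathrm{e}}^{\omega_1/T}+\cdots+a_n{\mathrm{e}}^{\omega_n/T}\big|_\infty\prod_{i=2}^n|a_i|_\infty\ge C(n)^{-1}$, which is the first inequality.

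For the second inequality I would first reduce to the case $\omega_1=0$. The series ${\mathrm{e}}^{\omega_1/T}=1+\omega_1/T+\cdots$ has $|{\mathrm{e}}^{\omega_1/T}|_\infty=1$, so $\big|a_1{\mathrm{e}}^{\omega_i/T}-a_i{\mathrm{e}}^{\omega_1/T}\big|_\infty=\big|a_1{\mathrm{e}}^{(\omega_i-\omega_1)/T}-a_i\big|_\infty$ while $|a_1|_\infty$ is untouched; thus one may replace $\omega_i$ by $\omega_i-\omega_1$ and assume $\omega_1=0$. When $n=2$ the claim now follows at once from the preceding corollary applied to the pair $(a_1,-a_2)$ with $\uu=({\mathrm{e}}^{\omega_2/T},1)$: it gives $|a_1|_0|a_2|_0|a_1|_\infty|a_2|_\infty\big|a_1{\mathrm{e}}^{\omega_2/T}-a_2\big|_\infty\ge C(2)^{-1}\|(a_1,a_2)\|_\infty\ge C(2)^{-1}|a_2|_\infty$, and dividing by $|a_2|_\infty$ and dropping $|a_1|_0|a_2|_0\le 1$ leaves $|a_1|_\infty\big|a_1{\mathrm{e}}^{\omega_2/T}-a_2\big|_\infty\ge C(2)^{-1}$.

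For $n\ge 3$ one cannot merely multiply $n-1$ two-dimensional estimates, since that produces a bound deteriorating with $\deg a_1$; the argument must genuinely be $n$-dimensional, and my plan is to imitate the determinant of Theorem \ref{Theorem:adelic}. Passing to the variable $U=1/T$ and clearing denominators by $U^d$ with $d=\max_i\deg a_i$, put $b_i(U)=U^d a_i(1/U)\in\bC[U]$ (of degree $\le d$) and $h_i=b_1{\mathrm{e}}^{\omega_iU}-b_i\in\bC[[U]]$; using $|a_1|_\infty={\mathrm{e}}^{d}|b_1|_0$ and $\big|a_1{\mathrm{e}}^{\omega_i/T}-a_i{\mathrm{e}}^{\omega_1/T}\big|_\infty={\mathrm{e}}^{d}|h_i|_0$, the claim becomes the valuation bound
\[
 \ord_0(b_1)+\sum_{i=2}^n\ord_0(h_i)\ \le\ nd+\tfrac{n(n-1)^2}{2}.
\]
Writing $h_i=\ub^{(i)}\cdot\uf$ with $\uf=(1,{\mathrm{e}}^{\omega_2U},\dots,{\mathrm{e}}^{\omega_nU})$ and $\ub^{(i)}=b_1\ue_1-b_i\ue_i\in\bC[U]^n$, and $b_1=\ub^{(1)}\cdot\uf$ with $\ub^{(1)}=b_1\ue_1$, one builds an $n\times n$ determinant $\Delta$ from $\ub^{(1)},\dots,\ub^{(n)}$ and their twisted derivatives $\big((d/dU+\omega_j)^k\ub^{(i)}_j\big)_j$, arranged so that (i) $\Delta$ is an honest polynomial in $U$ whose leading term is a nonzero Vandermonde in $\omega_1,\dots,\omega_n$ times a power of the leading coefficient of $b_1$ (forcing $\Delta\ne 0$ and bounding $\ord_0\Delta$ linearly in $d$), and (ii) a column operation using the units ${\mathrm{e}}^{\omega_jU}$ (each of norm $1$ at $U=0$) exhibits $b_1,h_2,\dots,h_n$ together with the relevant derivatives, so that $\ord_0\Delta\ge-\binom n2+\ord_0(b_1)+\sum_{i\ge 2}\ord_0(h_i)$ accumulates the orders of the $n-1$ errors $h_i$. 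Comparing the two bounds on $\ord_0\Delta$ — equivalently, running Theorem \ref{Theorem:adelic} itself with $s=n-1$ suitably chosen evaluation points — then delivers the constant $C(n)^{-(n-1)}=\exp\!\big(-(n-1)\binom n2\big)$, the exponent $n-1$ reflecting the $n-1$ quantities $h_i$.

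The main obstacle is precisely the construction of this determinant. The naive choice, the Wronskian $W_U(b_1,h_2,\dots,h_n)$, does not work: being an exponential polynomial with up to $2^{n-1}$ frequencies and polynomial coefficients of degree $\le nd$, its order at $U=0$ can only be bounded by something of size $2^{n-1}nd$, far too weak for a universal estimate. One therefore needs a rigid determinant of polynomial vectors, in the spirit of $\Delta$ in Theorem \ref{Theorem:adelic}, whose $U$-degree stays linear in $d$ while its order at $U=0$ is nonetheless forced to be at least $\ord_0(b_1)+\sum_{i\ge 2}\ord_0(h_i)-\binom n2$. Once such a $\Delta$ is in hand, the product formula (comparing $|\Delta|_0$ with $\deg_U\Delta$) closes the proof just as in Theorem \ref{Theorem:adelic}. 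As a possible alternative one could try to extract the bound on $\sum_i\ord_0(h_i)$ from the normality of the perfect system $(1,{\mathrm{e}}^{\omega_2T},\dots,{\mathrm{e}}^{\omega_nT})$ of \cite{Ja1964}, but one then has to treat the $n-1$ vectors $\ub^{(i)}$ jointly rather than one at a time.
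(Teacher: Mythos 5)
Your first-inequality argument is correct and identical to the paper's. Your reduction to $\omega_1=0$ and your $n=2$ argument are also fine. But for $n\ge 3$ you correctly identify, and then fail to resolve, the essential obstacle: you do not produce the determinant $\Delta$ you need, and the Wronskian alternative you discuss genuinely blows up. The proposal therefore has a real gap precisely where it matters.

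The missing idea is duality: the second inequality is not an independent determinant estimate but the Mahler dual of the first one, and no new adelic computation is required. Put $u_i={\mathrm{e}}^{\omega_i/T}$ and, for $(g_1,\dots,g_n)\in\bZ^n$, let
\[
\cC=\{\ux\in\Kv^n:\ |x_1u_1+\cdots+x_nu_n|_\infty\le{\mathrm{e}}^{g_1},\ |x_i|_\infty\le{\mathrm{e}}^{g_i}\ (2\le i\le n)\},
\]
whose polar body is
\[
\cC^*=\{\uy\in\Kv^n:\ |y_1|_\infty\le{\mathrm{e}}^{-g_1},\ |y_1u_i-y_iu_1|_\infty\le{\mathrm{e}}^{-g_i}\ (2\le i\le n)\}.
\]
The first inequality (together with the linear independence of $u_1,\dots,u_n$ over $\bC(T)$ that it implies) shows that the first minimum $\lambda_1$ of $\cC$ satisfies $\lambda_1^nV\ge C(n)^{-1}$, where $V={\mathrm{e}}^{g_1+\cdots+g_n}$ is the volume of $\cC$. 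Theorem~\ref{thm:Mahler:Minkowski} gives $\lambda_1\cdots\lambda_nV=1$, and Theorem~\ref{thm:Mahler:duality} then gives for the first minimum of $\cC^*$
\[
\lambda_1^*=\lambda_n^{-1}=\lambda_1\cdots\lambda_{n-1}V\ \ge\ \lambda_1^{n-1}V\ \ge\ C(n)^{-(n-1)/n}V^{1/n}.
\]
Choosing $g_1,\dots,g_n$ so that $|a_1|_\infty={\mathrm{e}}^{-g_1}$ and $|a_1u_i-a_iu_1|_\infty={\mathrm{e}}^{-g_i}$ ($2\le i\le n$) forces $\lambda_1^*\le 1$, hence $V\le C(n)^{n-1}$, which is exactly the second inequality. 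This bypasses the determinant construction you were looking for entirely; the exponent $n-1$ is an artifact of the Minkowski--Mahler equality $\lambda_1\cdots\lambda_nV=1$ rather than of $n-1$ separate error terms.
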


\begin{proof}
The first estimate follows directly from the previous
corollary using the facts that $|a_i|_0\le 1$ for each
$i=1,\dots,n$ and that $\|\ua\|_\infty\ge |a_1|_\infty$.
It implies that, within $\Kv=\bC((1/T))$, the series
$u_1={\mathrm{e}}^{\omega_1/T}, \dots, u_n={\mathrm{e}}^{\omega_n/T}$
are linearly independent over $\bC(T)$. Consequently,
for each $(g_1,\dots,g_n)\in\bZ^n$, the sets
\[
 \begin{aligned}
 \cC
 &= \{ (x_1,\dots,x_n)\in\Kv^n\,;\,
     |x_1u_1+\cdots+x_nu_n|_\infty\le {\mathrm{e}}^{g_1}
     \text{ and }
     |x_i|_\infty\le {\mathrm{e}}^{g_i} \ (2\le i\le n)\}, \\
 \cC^*
 &= \{ (y_1,\dots,y_n)\in\Kv^n\,;\,
     |y_1|_\infty\le {\mathrm{e}}^{-g_1}
     \text{ and }
     |y_1u_i-y_iu_1|_\infty\le {\mathrm{e}}^{-g_i} \ (2\le i\le n)\}
 \end{aligned}
\]
are dual convex bodies of $\Kv^n$.  Moreover, the same estimate
implies that the first minimum $\lambda_1$ of $\cC$ satisfies
$\lambda_1^nV \ge C(n)^{-1}$ where $V={\mathrm{e}}^{g_1+\cdots+g_n}$
is the volume of $\cC$.  By Theorems \ref{thm:Mahler:Minkowski}
and  \ref{thm:Mahler:duality},
this implies that the first minimum $\lambda^*_1$ of $\cC^*$
satisfies
\[
 \lambda^*_1
   = \lambda_n^{-1}
   = \lambda_1\cdots\lambda_{n-1}V
   \ge \lambda_1^{n-1}V
   \ge C(n)^{-(n-1)/n} V^{1/n}.
\]
Upon choosing $g_1,\dots,g_n$ so that $|a_1|_\infty={\mathrm{e}}^{-g_1}$
and $|a_1u_i-a_iu_1|_\infty={\mathrm{e}}^{-g_i}$ for $i=2,\dots,n$, we
also have $\lambda^*_1\le 1$, and so we obtain $V\le C(n)^{n-1}$
which yields the second inequality of the corollary.
\end{proof}

\vfill
 \vfill

\hbox{
\small
\vbox{
\hbox{Damien \sc Roy}
	\hbox{D\'epartement de math\'ematiques et de statistique \qquad \qquad \qquad
}
	\hbox{Universit\'e d'Ottawa
}
	\hbox{585, Avenue King Edward
}
	\hbox{Ottawa, Ontario
}
	\hbox{Canada K1N 6N5
	}
}	
\hfill \qquad
\vbox{	\hbox{Michel \sc Waldschmidt
	}
	\hbox{Sorbonne Universit\'es
	}
	\hbox{UPMC Univ Paris 06
	}
	\hbox{UMR 7586 IMJ-PRG
	}
	\hbox{F -- 75005 Paris, \sc France
	}
	\hbox{}
}	
}	

\end{document}